\documentclass[11pt]{amsart}
\usepackage{
    amsmath,
    amsthm,
    amssymb,
    bbm,
    mathrsfs,
    multicol,
    tikz,
    subcaption,
    float,
    color,
    titlesec
    }

\usepackage[margin=1.2in]{geometry}

\usepackage{hyperref}

\usetikzlibrary{calc,cd,knots,graphs,shapes,positioning}

\titleformat{\section}{\large\bfseries}{\thesection}{1em}{}
\titleformat{\subsection}{\bfseries}{\thesubsection}{1em}{}


\newtheorem{theorem}{Theorem}[section]
\newtheorem{proposition}[theorem]{Proposition}
\newtheorem{lemma}[theorem]{Lemma}
\newtheorem{corollary}[theorem]{Corollary}
\theoremstyle{definition}
\newtheorem{example}[theorem]{Example}

\newtheorem{remark}[theorem]{Remark}
\newtheorem{definition}[theorem]{Definition}

\newtheorem{question}[theorem]{Question}
\newtheorem*{acknowledgement}{Acknowledgement}
\numberwithin{equation}{section}

\newcommand{\cat}[1]{\mathcal{#1}}

\newcommand{\RT}{\mathrm{RT}}
\newcommand{\Q}{\mathbb{Q}}
\newcommand{\Gal}{\operatorname{Gal}}
\newcommand{\SL}{\operatorname{SL}}
\newcommand{\id}{\operatorname{id}}
\newcommand{\End}{\operatorname{End}}

\renewcommand{\AA}{\mathcal{A}}
\newcommand{\BB}{\mathcal{B}}
\newcommand{\CC}{\mathcal{C}}
\newcommand{\DD}{\mathcal{D}}
\newcommand{\ZZ}{\mathcal{Z}}
\newcommand{\OO}{\mathcal{O}}

\newcommand{\WW}{\mathcal{W}}
\newcommand{\WQ}{\mathcal{WQ}}
\newcommand{\TY}{\mathcal{TY}}
\newcommand{\HH}{\mathcal{H}}

\newcommand{\BC}{\mathbb{C}}
\newcommand{\BZ}{\mathbb{Z}}
\newcommand{\BR}{\mathbb{R}}
\newcommand{\BQ}{\mathbb{Q}}
\newcommand{\BN}{\mathbb{N}}
\newcommand{\BF}{\mathbb{F}}
\newcommand{\w}{\omega}
\newcommand{\e}{\epsilon}
\newcommand{\ol}{\overline}
\renewcommand{\o}{\otimes}
\renewcommand{\t}{\tau}
\renewcommand{\a}{\alpha}
\newcommand{\s}{\sigma}
\newcommand{\hs}{{\hat{\s}}}
\newcommand{\1}{\mathbbm{1}}
\newcommand{\inv}{^{-1}}
\newcommand{\sVec}{\operatorname{sVec}}
\renewcommand{\Vec}{\operatorname{Vec}}
\newcommand{\Rep}{\operatorname{Rep}}
\newcommand{\ord}{\operatorname{ord}}
\newcommand{\Tr}{\operatorname{Tr}}
\newcommand{\rev}{^{\operatorname{rev}}}
\newcommand{\chialt}{\chi_{\operatorname{alt}}}
\newcommand{\chisym}{\chi_{\operatorname{sym}}}
\newcommand{\jac}[2]{\left(\frac{#1}{#2}\right) }
\renewcommand{\th}[1]{#1^{\operatorname{th}}}

\newcommand{\Qb}{\ol{\BQ}}
\newcommand{\WWp}{\WW_{\operatorname{pt}}}
\newcommand{\lcm}{\operatorname{lcm}}
\newcommand{\FPdim}{\operatorname{FPdim}}

\newcommand{\OnePlusTwoMinus}
{
    \begin{tikzpicture}[baseline=0]
    \node[right = 0.84cm of a] (b) {};
    \draw (b) circle (0.75);
    \node[fill=white,above = 0.5cm of b] {};
    \node[right = 1cm of b,circle,draw] (c) {$\theta^{-1}$};
    \node[left = 0.25cm of b,circle,draw,fill=white] {$\theta$};
    \node[above = 0.5cm of b] (b2) {};
    \node[above = 0.75cm of c] (c2) {};
    \node[below = 1.1cm of b] (b3) {};
    \node[below = 0.75cm of c] (c3) {};
    \node[above = 1.1cm of b] (b4) {};
    \node[below = 0.5cm of b] (b5) {};
    \draw (b4.center) edge (b.center);
    \draw (b5) edge (b.center);
    \draw (b5) edge (b3.center);
    \draw (c2.center) edge (c);
    \draw (c3.center) edge (c);
    \draw[in=90,out=90] (b4.center) edge (c2.center);
    \draw[in=-90,out=-90] (b3.center) edge (c3.center);
    \end{tikzpicture}
}

\newcommand{\StretchCrossing}
{
	\begin{tikzpicture}[baseline=25, x=0.8cm, y=0.8cm,scale=0.8]
	\begin{knot}[flip crossing=2, clip width=4]
	\draw (-1,1.6) node[draw,circle] (A) {$\theta^{-1}$};
	\draw (1, 1.6) node[draw,circle] (B) {$\theta$};
	\strand (-1,0.5)
	to [out=up, in=down] (A.south);
	\strand (-1, 2.5)
	to [out=down, in=up] (A.north);
	\strand (-1, 0.5)
	to [out=down, in=left] (0.5,-0.5)
	to [out=right, in=down] (2,0.5)
	to [out=up, in=down] (2,2.5)
	to [out=up, in=right] (0.5,3.5)
	to [out=left, in=up] (-1,2.5);
	\strand (2.5, 3.5)
	to [out=down, in=up] (B.north);
	\strand (2.5, -0.5)
	to [out=up, in=down] (B.south);
	\strand (-2, 0)
	to [out=down, in=left] (-1, -1)
	to [out=right, in=left] (1.5, -1)
	to [out=right, in=down] (2.5, -0.5);
	\strand (-2, 0)
	to [out=up, in=down] (-2, 3.5)
	to [out=up, in=left] (-1, 4)
	to [out=right, in=left] (1.5, 4)
	to [out=right, in=up] (2.5, 3.5);
	\end{knot}
	\end{tikzpicture}
}

\newcommand{\OneMinusTwoPlus}
{
	\begin{tikzpicture}[baseline=25, x=0.8cm, y=0.8cm,scale=0.8]
	\begin{knot}[flip crossing=2, clip width=4]
	\draw (-1,1.6) node[draw,circle] (A) {$\theta^{-1}$};
	\draw (4, 1.6) node[draw,circle] (B) {$\theta$};
	\strand (A.south)
	to [out=down, in=left] (0.5,-0.5)
	to [out=right, in=down] (2,1.5)
	to [out=up, in=right] (0.3,3.5)
	to [out=left, in=up] (A.north);
	\strand (2.5, 3.5)
	to [out=left, in=up] (1,1.5);
	\strand (2.5, 3.5)
	to [out=right, in=up] (B.north);
	\strand (2.5, -0.5)
	to [out=left, in=down] (1,1.5);
	\strand (2.5, -0.5)
	to [out=right, in=down] (B.south);
	\end{knot}
	\end{tikzpicture}
}

\newcommand{\InnerAction}
{
    \begin{tikzpicture}[baseline=25, x=0.8cm, y=0.8cm, scale=0.8]
    \begin{knot}
    [flip crossing=1, flip crossing=2]
    \strand[only when rendering/.style={dashed}] (-1.3,0.5)
        to [out=left, in=down] (-1.8,1.2)
        to [out=up, in=down] (-1, 2.9);
    \strand[only when rendering/.style={dashed}] (-1.3,0.5)
		to [out=right, in=down] (-1,0.8);
	\draw (-1,1.6) node[draw] (A) {$f$};
	\strand (-1,0.3)
		to [out=up, in=down] (A.south);
	\strand (-1, 3)
		to [out=down, in=up] (A.north);
	\strand (-1, 0.3)
		to [out=down, in=left] (0,-0.7)
		to [out=right, in=down] (1,0.3)
		to [out=up, in=down] (1,3)
		to [out=up, in=right] (0,4)
		to [out=left, in=up] (-1,3);
   	\node at (-0.6,-0.5) [label=left:\footnotesize{$M$}] {};
   	\end{knot}
    \end{tikzpicture}
}

\newcommand{\PassOver}
{
    \begin{tikzpicture}[baseline=25, x=0.8cm, y=0.8cm, scale=0.8]
	\begin{knot}
	[flip crossing=1, flip crossing=2]
	\strand[only when rendering/.style={dashed}] (-1.3,0.5)
	to [out=left, in=down] (-1.8,1.2)
	to [out=up, in=left] (1, 2.9);
	\strand[only when rendering/.style={dashed}] (-1.3,0.5)
	to [out=right, in=left] (1, 2);
	\draw (-1,1.6) node[draw] (A) {$f$};
	\strand (-1,0.3)
	to [out=up, in=down] (-1, 0.4);
	\strand (-1,0.7)
	to [out=up, in=down] (A.south);
	\strand (-1, 2.6)
	to [out=up, in=down] (-1, 3);
	\strand (-1, 2.3)
	to [out=down, in=up] (A.north);
	\strand (-1, 0.3)
	to [out=down, in=left] (0,-0.7)
	to [out=right, in=down] (1,0.3)
	to [out=up, in=down] (1,3)
	to [out=up, in=right] (0,4)
	to [out=left, in=up] (-1,3);
	\node at (-0.6,-0.5) [label=left:\footnotesize{$M$}] {};
	\end{knot}
	\end{tikzpicture}
}

\newcommand{\OuterAction}
{
    \begin{tikzpicture}[baseline=25, x=0.8cm, y=0.8cm, scale=0.8]
	\begin{knot}
	[flip crossing=1, flip crossing=2]
	\strand[only when rendering/.style={dashed}] (0.7,0.5)
	to [out=left, in=down] (0.2,1.2)
	to [out=up, in=down] (1, 2.9);
	\strand[only when rendering/.style={dashed}] (0.7,0.5)
	to [out=right, in=down] (1, 1);
	\draw (-1,1.6) node[draw] (A) {$f$};
	\strand (-1,0.3)
	to [out=up, in=down] (A.south);
	\strand (-1, 3)
	to [out=down, in=up] (A.north);
	\strand (-1, 0.3)
	to [out=down, in=left] (0,-0.7)
	to [out=right, in=down] (1,0.3)
	to [out=up, in=down] (1,3)
	to [out=up, in=right] (0,4)
	to [out=left, in=up] (-1,3);
	\node at (-0.6,-0.5) [label=left:\footnotesize{$M$}] {};
	\end{knot}
	\end{tikzpicture}
}

\newcommand{\SmallLoop}
{
    \begin{tikzpicture}[baseline=25, x=0.8cm, y=0.8cm, scale=0.8]
	\begin{knot}
	[flip crossing=1, flip crossing=2]
	\strand[only when rendering/.style={dashed}] (0.5,0.8)
	to [out=left, in=down] (0.2,1.2)
	to [out=up, in=down] (1, 2.9);
	\strand[only when rendering/.style={dashed}] (0.5,0.8)
	to [out=right, in=down] (0.8, 2.3);
	\draw (-1,1.6) node[draw] (A) {$f$};
	\strand (-1,0.3)
	to [out=up, in=down] (A.south);
	\strand (-1, 3)
	to [out=down, in=up] (A.north);
	\strand (-1, 0.3)
	to [out=down, in=left] (0,-0.7)
	to [out=right, in=down] (1,0.3)
	to [out=up, in=down] (1,3)
	to [out=up, in=right] (0,4)
	to [out=left, in=up] (-1,3);
	\node at (-0.6,-0.5) [label=left:\footnotesize{$M$}] {};
	\end{knot}
	\end{tikzpicture}
}

\newcommand{\SmallDot}
{
	\begin{tikzpicture}[baseline=20, x=0.5cm, y=0.5cm]
	\begin{knot}
	[flip crossing=1, flip crossing=2]
	\strand[only when rendering/.style={dashed}] (0.2,1.2)
	to [out=up, in=down] (1, 2.8);
	\draw (-1,1.6) node[draw] (A) {$f$};
	\draw [fill] (0.2,1.2) circle [radius=0.1];
	\strand (-1,0)
	to [out=up, in=down] (A.south);
	\strand (-1, 3)
	to [out=down, in=up] (A.north);
	\strand (-1, 0)
	to [out=down, in=left] (0,-1)
	to [out=right, in=down] (1,0)
	to [out=up, in=down] (1,3)
	to [out=up, in=right] (0,4)
	to [out=left, in=up] (-1,3);
	\node at (-0.6,-0.5) [label=left:\footnotesize{$M$}] {};
	\end{knot}
	\end{tikzpicture}
}

\newcommand{\TraceOfF}
{
	\begin{tikzpicture}[baseline=20, x=0.5cm, y=0.5cm]
	\begin{knot}
	\draw (-1,1.6) node[draw] (A) {$f$};
	\strand (-1,0)
	to [out=up, in=down] (A.south);
	\strand (-1, 3)
	to [out=down, in=up] (A.north);
	\strand (-1, 0)
	to [out=down, in=left] (0,-1)
	to [out=right, in=down] (1,0)
	to [out=up, in=down] (1,3)
	to [out=up, in=right] (0,4)
	to [out=left, in=up] (-1,3);
	\node at (-0.6,-0.5) [label=left:\footnotesize{$M$}] {};
	\end{knot}
	\end{tikzpicture}
}

\newcommand{\InnerActionTheta}
{
    \begin{tikzpicture}[baseline=25, x=0.8cm, y=0.8cm, scale=0.8]
    \begin{knot}
    [flip crossing=1, flip crossing=2]
    \strand[only when rendering/.style={dashed}] (-1.3,0.5)
        to [out=left, in=down] (-2.1,1.2)
        to [out=up, in=down] (-1, 3.1);
    \strand[only when rendering/.style={dashed}] (-1.3,0.5)
		to [out=right, in=down] (-1,0.8);
	\draw (-1,1.6) node[draw] (A) {$\theta^{-1}$};
	\strand (-1,0.3)
		to [out=up, in=down] (A.south);
	\strand (-1, 3)
		to [out=down, in=up] (A.north);
	\strand (-1, 0.3)
		to [out=down, in=left] (0,-0.7)
		to [out=right, in=down] (1,0.3)
		to [out=up, in=down] (1,3)
		to [out=up, in=right] (0,4)
		to [out=left, in=up] (-1,3);
   	\node at (-0.6,-0.5) [label=left:\footnotesize{$M$}] {};
   	\end{knot}
    \end{tikzpicture}
}

\title{Higher Gauss sums of modular categories}
\author{Siu-Hung Ng}
\address{Department of Mathematics,
Louisiana State University,
Baton Rouge, LA 70803, USA}
\email{rng@math.lsu.edu}
\thanks{The first author was partially supported by NSF DMS1664418}

\author{Andrew Schopieray}
\address{School of Mathematics and Statistics,
UNSW Sydney,
NSW 2052,
Australia}
\email{a.schopieray@unsw.edu.au}

\author{Yilong Wang}
\address{Department of Mathematics,
Louisiana State University,
Baton Rouge, LA 70803, USA}
\email{yilongwang@lsu.edu}
\begin{document}
\begin{abstract}
The definitions of the $\th{n}$ \emph{Gauss sum} and the associated $\th{n}$ \emph{central charge} are introduced for premodular categories $\mathcal{C}$ and $n\in\mathbb{Z}$. We first derive an expression of the $\th{n}$ Gauss sum of a modular category $\CC$, for any integer $n$ coprime to the order of the T-matrix of $\CC$, in terms of the first Gauss sum, the global dimension, the twist and their Galois conjugates. As a consequence, we show for these $n$, the higher Gauss sums are $d$-numbers and the associated central charges are roots of unity.  In particular, if $\mathcal{C}$ is the Drinfeld center of a spherical fusion category, then these higher central charges are 1.  We obtain another expression of higher Gauss sums for de-equivariantization and local module constructions of appropriate premodular and modular categories.  These expressions are then applied to prove the Witt invariance of higher central charges for pseudounitary modular categories.
\end{abstract}

\maketitle

\begin{section}{Introduction}
In 1801 \cite{GaussDA}, C.~F.~Gauss introduced the sum $\t_n(k)=\sum_{j=0}^{k-1}e^{2\pi i n j^2/k}$, which is now called a (\emph{quadratic}) \emph{Gauss sum}. The value of this sum was computed by Gauss up to a sign in 1805 for $\t_1(k)$ when $k$ is odd; it is equal to $\pm \sqrt{k}$ or $\pm i\sqrt{k}$ depending on whether $k \equiv 1$ or $3 \pmod{4}$, respectively. The sign of this quadratic Gauss sum was finally proved to be positive in 1811 \cite{sumatio}.

Another version of Gauss sum was introduced in 1840 by L.~G.~Dirichlet \cite{Dir}: for each multiplicative character $\chi$ of a field $\BF_p$ of odd prime order $p$, its Gauss sum is $g_n(\chi)=\sum_{j=0}^{p-1}\chi(j)e^{2 \pi i nj/p}$. The quadratic Gauss sum $\t_n(p)$ can be recovered from $g_n(\chi)$  for any integer $n$ coprime to $p$ if $\chi$ is the Legendre symbol of $\BF_p$. Moreover, the Gauss sums $g_n(\chi)$ also reveal a relation with Frobenius-Schur indicators via Jacobi sums when counting the number of solutions of the equation $x^m+y^m=1$ in $\BF_p$ (cf. \cite{Ireland}). The narrative of higher Gauss sums of premodular categories in this paper is a generalization of quadratic Gauss sums.

Quadratic Gauss sums can be understood in terms of the sum of the values of quadratic forms of finite abelian groups. In particular, if one considers the quadratic form $q: \BZ_k \to \BC^\times$ with $q(j)=e^{2 \pi i n j^2/k}$, then the sum of the values of $q$ is the Gauss sum $\t_n(k)$.

Premodular or ribbon categories are categorical generalizations of quadratic forms of finite abelian groups. Abelian groups equipped with \emph{nondegenerate} quadratic forms are the classical counterparts of modular categories, which arise naturally in low-dimensional topology and rational conformal field theory. Moreover, modular categories constitute the mathematical foundation of topological quantum computations \cite{WZ, RW}. One can assign a real number called the \emph{quantum dimension} to each object of a ribbon category $\CC$. There is a natural isomorphism $\theta$ of the identity functor of $\CC$, called the \emph{ribbon isomorphism}, whose values on the simple objects of $\CC$ play the role of the values of a quadratic form of a finite abelian group.

In the  late $\th{20}$ century,   a notion of \emph{Gauss sum}  incidentally emerged in quantum invariants of $3$-manifolds derived from modular categories introduced by Reshetikhin and Turaev (cf. \cite{TuraevBook, witten, rt, tur, lick, kirby, murakami}).   The modulus of a Gauss sum $\t$ and the finiteness of the order of $\t/\ol \t$ reemerge in the contexts of invariants of $3$-manifolds and rational conformal field theory (RCFT) (cf. \cite{TuraevBook} and \cite{vafa}). Similar to its classical counterpart, the Gauss sums characterize modular categories with T-matrices of order 2, up to equivalence \cite{WW}. Moreover, the classification of relations among the Witt classes $[\mathcal{C}(\mathfrak{sl}_2,k)]$ \cite{DNO} and $[\mathcal{C}(\mathfrak{sl}_3,k)]$ \cite{schopieray2017} were proven in part using first central charge arguments.

In this paper, we define \emph{higher Gauss sums} $\tau_n$ and \emph{anomalies} $\a_n$ of a premodular category as generalizations of the Gauss sum of a quadratic form of a finite abelian group $G$ and the Jacobi symbol $\jac{-1}{|G|}$. A natural choice of the square root of the anomaly $\a_n$ is defined as the \emph{higher} (\emph{multiplicative}) \emph{central charge} $\xi_n$ for $n\in\mathbb{Z}$ which is motivated by rational conformal field theory.

Frobenius-Schur indicators are arithmetic invariants of premodular categories. They were first introduced for the representations of finite groups a century ago. Their recent generalizations to Hopf algebras \cite{LM00} and rational conformal field theory \cite{Bantay} inspired the development of Frobenius-Schur indicators of pivotal categories \cite{NSind}.  Similar to their classical counterparts, the higher Gauss sums of modular categories are closely related to the Frobenius-Schur indicators.  In particular, the modulus of the higher Gauss sums of a modular category are completely determined by the Frobenius-Schur indicators. More relations between these arithmetic invariants and examples are demonstrated in Section \ref{sec:def}.

One subtlety to the definitions of higher anomaly or central charges for a premodular category is that they are only well-defined when $\tau_n(\mathcal{C})\neq0$.  For a modular category $\CC$, $\tau_1(\mathcal{C})\tau_{-1}(\mathcal{C})=\dim(\mathcal{C})\neq0$ (cf. \cite{TuraevBook, BakalovKirillov, MugerSubfactor2, ENO}).  Our Theorem \ref{thm:Galois} shows that for any modular category $\mathcal{C}$, $\tau_n(\mathcal{C})\neq0$ when $n$ is coprime to the order of the T-matrix of $\CC$.

A $d$-\emph{number} is an algebraic integer which generates in the ring of algebraic integers an ideal fixed by the absolute Galois group \cite{codegrees}. The formal codegrees of spherical fusion categories are $d$-numbers and they have been used to prove the non-existence of fusion categories associated with specific fusion graphs/rules (cf. \cite{jonessub, Calegari2011, penneys}). In
Corollary \ref{cor:RootOfUnity}, higher Gauss sums $\t_n(\CC)$ are shown  to be $d$-numbers and the higher central charge $\xi_n(\CC)$ are root of unity under the assumption that $\CC$ is modular and $n$ is coprime to the order of the T-matrix of $\CC$.  In particular, when $\CC$ is the Drinfeld center of a spherical fusion category, the coprime higher central charges are 1 (Theorem \ref{thm:GaussSumOfCenter}), which may not hold for other $n$ by Example \ref{ex:shi}.

\par Higher Gauss sums and central charges behave well under standard constructions such as de-equivariantizations of premodular categories and categories of local modules over certain connected \'etale algebras (also known as simple current extensions or condensations). In particular, Theorem \ref{th:deeq} proves that $|G|\tau_n(\CC^0_G)=\tau_n(\mathcal{C})$ and $\xi_n(\CC^0_G)=\xi_n(\mathcal{C})$ where $\CC^0_G$ is the condensation of the premodular category $\CC$ by a \emph{Tannakian subgroup} $G$ of $\CC$ for any integer $n$ coprime to the order of the T-matrix of $\CC$.   In addition, if $\CC$ is modular (Theorem \ref{th:deek}) the preceding statement holds for the category  $\CC_A^0$ of local modules  over a \emph{ribbon algebra} $A$ of $\CC$ (defined in Definition \ref{def:A-module}).

As an application of Sections \ref{sec:arith} and \ref{sec:deq}, we prove in Theorem \ref{thm:Witt} that two Witt equivalent pseudounitary modular categories must have the same higher central charge $\xi_n$ for any integer $n$ coprime to the orders of their T-matrices.
We use this result to distinguish Witt equivalence classes of pseudounitary modular categories which are indistinguishable using the first  central charge alone. The higher central charges which are well-defined for any two Witt equivalent pointed modular categories are invariants, and we conjecture this statement could be generalized for all Witt equivalence classes.

The organization of this paper is as follows:  Section \ref{prelims} describes the notation, basic definitions, and necessary concepts while introducing fundamental examples. Section \ref{sec:def} motivates our definitions of higher Gauss sums and higher (multiplicative) central charges for premodular categories. The relations between higher Gauss sums and Frobenius-Schur indicators, and a relation between the first and second Gauss sums are shown in this section. A broad range of examples are given to illustrate their properties.  Section \ref{sec:arith} describes the Galois action on the modular data of modular categories which is the key to prove our main result Theorem \ref{thm:Galois}.  Section \ref{sec:deq} consists of a sequence of technical lemmas proven by graphical calculus. These lemmas and  Theorem \ref{thm:Galois} are then applied to prove Theorems \ref{th:deeq} and \ref{th:deek}.  Some examples of computing higher Gauss sums and central charges are illustrated.  Section \ref{sec:witt} proves the Witt invariance of certain higher central charges (Theorem \ref{thm:Witt}). Applications of higher central charges to differentiate Witt equivalence classes which are indistinguishable by the first central charge alone are demonstrated.  The paper ends with some open questions to stimulate conversation and future research.
\end{section}


\begin{section}{Preliminaries}\label{prelims}
\begin{subsection}{Premodular and modular categories}
In this section, we recall some basic definitions and results on fusion and modular categories. The readers are referred to \cite{tcat} and \cite{MugerSubfactor2} for the details.

Throughout this paper, a \emph{fusion category} $\CC$ is a $\mathbb{C}$-linear, abelian, semisimple, rigid monoidal category with finite-dimensional Hom-spaces, finitely-many isomorphism classes of simple objects and simple monoidal unit $\1$. In particular,  we abbreviate the dimension of the Hom-space $ \CC(X, Y) $ over $\BC$ by
$$
[X:Y]_\CC := \dim_\BC(\CC(X, Y))
$$
for any $ X, Y \in \CC $, and  the set of isomorphism classes of simple objects of $\CC$ by $\OO(\CC)$.

The duality of $\CC$ can be extended to a contravariant monoidal equivalence $(-)^*$, and so  $(-)^{**}$ defines a monoidal equivalence on $\CC$. A \emph{pivotal structure} on $\CC$ is a natural isomorphism $j: \id_\CC \to (-)^{**}$ of monoidal functors. For any given pivotal structure $j$ on $\CC$, one can define a (left) \emph{trace} $\Tr_\CC(f) \in \BC$ for any endomorphism $f: V \to V$ in
$\CC$ (see for example \cite{NgSchaunburgSpherical}). In particular, $\Tr_\CC(\id_V)$ is called the (left pivotal) \emph{dimension} of $V$ and denoted by $\dim_\CC(V)$, or $ \dim(V) $ when the category is clear from the context.  The pivotal structure $j$ is said to be \emph{spherical} if $\dim(V)=\dim(V^*)$ for all $V  \in \OO(\CC)$. In this case, $\dim (V)$ is a real cyclotomic integer for $V \in \CC$ (cf. \cite{ENO}). In particular, $\dim(V)$ is totally real. The \emph{global dimension} $\dim (\CC)$ of $\CC$ is given by
$$
\dim (\CC) = \sum_{V \in \OO(\CC)} \dim (V)^2.
$$
Note that $\dim(\CC)$ can be defined without using any pivotal structure of $\CC$ \cite{MugerSubfactor2, ENO}. Recall that an algebraic number $ a $ is called \emph{totally positive} if every Galois conjugate of $ a $ is a positive real number. Since $\dim(V)$ is totally real, $ \dim(\CC) $ is a totally positive cyclotomic integer.

A fusion category $\CC$ is called \emph{pseudounitary} if $\dim(\CC)$ agrees with $\FPdim(\CC)$, the Frobenius-Perron dimension of $\CC$. In this case, $\CC$ admits a unique spherical pivotal structure such that $\dim(V)$ is the Frobenius-Perron dimension of $V$ for all $V \in \CC$ (cf. \cite{ENO}). Throughout this paper, we assume that any pseudounitary fusion category is equipped with such a canonical spherical pivotal structure.

By virtue of \cite[Theorem 2.2]{NSind} we may assume without loss of generality that any pivotal category $\CC$ in this paper is strict. In other words, $\CC$ is a strict monoidal category such that $(-)^*$ is a strict monoidal functor, $(-)^{**}=\id_\CC$, and the pivotal structure $j:\id_\CC \to (-)^{**}$ is the identity. Under this assumption, we can perform computations using graphical calculus with the conventions of \cite{BakalovKirillov, KiO} for any premodular category $ \CC $.

For a  braided fusion category $\CC$ with braiding $c$, the \emph{M\"uger centralizer} or \emph{relative commutant} $\DD'$ of a full fusion subcategory $\DD$ of $\CC$ is the full subcategory generated by
$$\{X \in \OO(\CC)\mid c_{Y,X} \circ c_{X,Y}=\id_{X\o Y} \text{ for all } Y \in \OO(\DD)\} .$$
Note that $\DD'$ is always a fusion subcategory of $\CC$. A braided fusion category is said to be \textit{nondegenerate} if $\OO(\CC') = \{\1\}$. Note that the \emph{Drinfeld center} $\mathcal{Z}(\mathcal{C})$ of a fusion category $\CC$ is a nondegenerate braided fusion category.

A \emph{premodular category} $\CC$ is a braided spherical fusion category. The (unnormalized) S-\emph{matrix} of a premodular category $\CC$ is defined as
$$
S_{X,Y} = \Tr_\CC(c_{Y,X^*}\circ c_{X^*,Y}) \quad\text{for }X, Y \in \OO(\CC)\,.
$$
In particular, $S_{\1,\1} =1$. Nondegeneracy of the braiding of any premodular category $\mathcal{C}$ can be characterized by the invertibility of its S-matrix (cf. \cite{MugerSubfactor2}). In this case, $\CC$ is called a \emph{modular category}.

If  $\DD$ is a spherical fusion category, then its Drinfeld center $\ZZ(\DD)$ inherits the spherical structure of $\DD$, and hence a modular category. Throughout this paper, the Drinfeld center $\ZZ(\DD)$ of a spherical fusion category $\DD$ is always assumed to be spherical with the inherited spherical structure from $\DD$.

In a premodular category $\CC$, the underlying braiding $c$ determines the Drinfeld isomorphism $u:\id_\CC \to (-)^{**}$ of $\BC$-linear functors (cf. \cite[p38]{NgSchaunburgSpherical}). The \emph{twist} or \emph{ribbon} isomorphism is defined as $\theta = u\inv j$ where $j$ is the spherical structure of $\CC$. For each $X\in\mathcal{O}(\mathcal{C})$, $\theta_X=\lambda\cdot\text{id}_X$ for some scalar $\lambda\in\mathbb{C}$. We will use the abuse notation to denote $\lambda$ by $\theta_X$. By  \cite{andem,vafa}, $\theta_X$ is a root of unity and so the T-\emph{matrix} of $\CC$, which is defined as
$$
T_\CC =(\delta_{X,Y} \theta_X)_{X,Y \in \OO(\CC)}\,,
$$
has finite order. The order $N=\ord(T_{\ZZ(\CC)})$ is called the \emph{Frobenius-Schur exponent} of $\CC$ (cf. \cite{NgSchaunburgSpherical}), which is generally greater than $\ord(T_\CC)$. Since the S-matrices of $\CC$ and $\ZZ(\CC)$ are defined over the cyclotomic field $\BQ(e^{2 \pi i/N})$  (see \cite{NS10}), $N$ is also the \emph{conductor} of $\CC$. If $\CC$ is modular, then $\ord(T_\CC) = \ord(T_{\ZZ(\CC)})$. Therefore, we will simply call $\ord(T_\CC)$ the conductor or the FS-exponent of $\CC$ when it is modular.

If $\mathcal{C}$ is a premodular category with braiding $c$, then we define $\mathcal{C}^\text{rev}$ to be identical to $\mathcal{C}$ as spherical fusion categories but with the reversed braiding $\tilde{c}_{X,Y}:=c_{Y,X}^{-1}$ for all $X,Y\in\mathcal{C}^\text{rev}$.  In this case the associated twist $\tilde{\theta}$ satisfies $\tilde{\theta}_X=\theta^{-1}_X$ for all $X\in\mathcal{C}^\text{rev}$.

 For any finite group $G$, the category $\Rep(G)$ of finite-dimensional complex representations of $G$ is a braided fusion category with the braiding inherited from $\Vec$, the category of finite-dimensional $ \BC $-spaces. In particular, $\CC=\Rep(G)$ is \emph{symmetric}, i.e., $\OO(\CC') = \OO(\CC)$. Moreover, $\Rep(G)$ is pseudounitary and the corresponding pivotal dimensions coincide with the dimensions of vector spaces over $\BC$. Therefore, $\Rep(G)$ admits a natural premodular category structure and its ribbon isomorphism $\theta$ is the identity. For the purpose of this paper, a  premodular category $\CC$ is called \emph{Tannakian}  if $\CC$ is equivalent to $\Rep(G)$ for some finite group $G$.   In particular, the ribbon isomorphism of a Tannakian category is the identity natural isomorphism.

\begin{example}\label{ex:group}
Let $A$ be a finite abelian group and $q:A\to\mathbb{C}^\times$ a quadratic form. By the results of \cite{EMa, EMb}, there exists an Eilenberg-MacLane 3-cocycle $ (\w, c) $ of $ A $, where $ \w $ is a 3-cocycle and $ c $ is a 2-cochain of $ A $ such that $c(a,a) =q(a)$ for all $a \in A$. In particular, $c$ defines a braiding on the fusion category $\Vec_A^\w$, the category of finite-dimensional $A$-graded vector spaces over $\BC$ with the associativity isomorphism given by $\w$. The fusion category $\Vec_A^\w$ is pseudounitary with the canonical pivotal dimensions given by the usual dimension of $ \BC $-spaces. Therefore, $\Vec_A^\w$ with the braiding $c$ is a premodular category and we denote it by $\Vec_A^{(\w,c)}$. The quadratic form $q$ completely determines the equivalence class of  $\Vec_A^{(\w,c)}$. We denote by $\mathcal{C}(A,q)$ any of these equivalent premodular categories.  This premodular category $\mathcal{C}(A, q)$ is modular if and only if the corresponding quadratic form $q$ is nondegenerate. The category of super vector spaces $\sVec$ can be defined as the premodular category $\CC(\BZ_2, q)$ with $q(1)=-1$.
\end{example}

\begin{example}[premodular categories from Lie theory]
Another family of premodular categories can be realized by a construction based on the representation theory of the $q$-deformed universal enveloping algebra $\mathcal{U}_q(\mathfrak{g})$ (cf. \cite{lusztig, postriksemisimple}) for a complex finite-dimensional simple Lie algebra $\mathfrak{g}$ and a complex parameter $q$.  These categories have a long history in mathematical physics. The readers are referred to \cite{rowell2006} for a general survey of the subject. The properties of the resulting categories depend heavily on $q$.  For certain roots of unity, modular tensor categories are produced while other choices may result in premodular categories, or ones with infinitely-many isomorphism classes of simple objects.  For the illustrative examples in this paper we will only consider those roots of unity of the form $q=e^{\pi i/(m(k+h^\vee))}$ where  $k\in\BN$ is the \emph{level},  $h^\vee$ is the dual Coxeter number of $\mathfrak{g}$ and $m \in \{1,2,3\}$ is a scaling factor dependent on $\mathfrak{g}$.  The categories $\mathcal{C}(\mathfrak{g},k)$ in this smaller collection are known to be unitary modular tensor categories with modular data accessible via the Kac-Petersen formulas \cite{kac}.
\end{example}

Given two premodular categories $\mathcal{C}$ and $\mathcal{D}$, their \emph{(Deligne) tensor product} $\mathcal{C}\boxtimes\mathcal{D}$ has simple objects $X\boxtimes Y$ for $X\in\mathcal{O}(\mathcal{C})$ and $Y\in\mathcal{O}(\mathcal{D})$, and is again premodular with the ribbon structure given by  $\theta_{X\boxtimes Y}=\theta_X \o \theta_Y$.  For any braided fusion category $\mathcal{C}$,  it follows from a well-known result of M\"uger \cite{MugerSubfactor2} that
\begin{equation}\label{eq:center}
\mathcal{Z}(\mathcal{C})\simeq\mathcal{C}\boxtimes\mathcal{C}^\text{rev}
\end{equation}
is a braided equivalence if and only if $\mathcal{C}$ is nondegenerate.

\end{subsection}

\begin{subsection}{Local modules and the Witt group}\label{sec:local}

There are several constructions of new modular categories from a given one in the context of rational conformal field theory.  One method is to consider a certain subcategory of the tensor category of modules over a commutative algebra object in a modular category. The readers are referred to \cite{KiO} for more details.  When one adds various restrictions to the commutative algebras under consideration, the resulting tensor category is fusion, braided, premodular, and sometimes modular.  Here we review the basics of this theory assuming the reader is familiar with the rudimentary definitions of modules over algebras in fusion categories which can be found in \cite{KiO} and \cite[Chapter 8]{tcat} when needed.

\begin{definition} \label{d:ribbon_alg}
	Let $\mathcal{C}$ be a premodular category.  An algebra $A$ of $\CC$ (or simply $ A \in \CC $) is called \emph{connected \'etale} if the multiplication morphism $m:A\otimes A\to A$ satisfies the following conditions:
    \begin{itemize}
        \item[(a)] $[\1 : A]_\CC=1$ (\emph{connected}),
        \item[(b)] $m\circ c_{A,A}=m$, (\emph{commutative}) and
        \item[(c)] $m$ splits as a morphism of $A$-bimodules (\emph{separable}).
    \end{itemize}
	A connected \'etale algebra $ A $ of $ \cat{C} $ is called \emph{ribbon} if
	\begin{itemize}
        \item[(d)] $\dim A \ne 0$ and $\theta_A=\id_A$.
	\end{itemize}
\end{definition}

The conditions (a), (b), (c) and (d) for a ribbon algebra ensure the category of left $A$-modules $\mathcal{C}_A$ is a spherical fusion category equipped with the spherical structure inherited from $\mathcal{C}$ (cf. \cite{KiO}). However, the braiding of $\mathcal{C}$ can only be passed onto  some fusion subcategories of $\mathcal{C}_A$ in general.  The following definition is due to Pareigis \cite{pareigis}.

\begin{definition}\label{def:A-module}
    Let $\mathcal{C}$ be a premodular category with a connected \'etale algebra $A\in\mathcal{C}$.  Denote by $\mathcal{C}_A^0$ the full subcategory of \emph{local} (or \emph{dyslectic}) $M\in \mathcal{C}_A$, i.e.,
    \begin{equation}
        \rho\circ c_{M,A} \circ c_{A,M}=\rho,
    \end{equation}
     where $\rho:A\otimes M\to M$ is the left $A$-module action of $M$.
\end{definition}

It is shown in \cite{KiO} that if a connected \'etale algebra $A$ is in addition ribbon, then $\mathcal{C}_A^0$ is premodular. Moreover, $ \CC_A^0 $ is modular if $\mathcal{C}$ is \cite[Theorem 4.5]{KiO}. It is worth to note that the same results could be established for the category of right local modules of $A$.

For a ribbon algebra $A\in\mathcal{C}$ we denote the dimension of a left $A$-module $M\in\mathcal{C}_A$ by $\dim_A(M)$.  By \cite[Theorem 1.18]{KiO}, we have the relations
\begin{equation}\label{eq:Dimensions}
    \dim_A(M)=\dfrac{\dim_\CC(M)}{\dim_\CC(A)},\qquad\text{and}\qquad\dim(\mathcal{C}_A^0)
    =\dfrac{\dim(\mathcal{C})}{\dim_\CC(A)^2}.
\end{equation}

Let $\DD$ be a Tannakian subcategory of $\CC$. Suppose $\DD$ is equivalent to $\Rep(H)$ for some finite group $H$ as premodular categories. The dual group algebra $A=\BC[H]^* \in \Rep(H)$, called the \emph{regular algebra} of $\DD$, is a ribbon algebra of $\Rep(H)$, and hence of $\CC$.

\begin{example}\label{ex:reg}
    Let $q$ be a nondegenerate quadratic form on a finite abelian group $G$. Then $\mathcal{C}(G,q)$ is a modular category of rank $|G|$.  Assume in addition that the metric group $(G,q)$ has an isotropic subgroup $H\subset G$ (i.e. $q(x)=1$ for all $x\in H$).  Then $\mathcal{C}(G,q)$ contains a Tannakian subcategory $\DD$ equivalent to $\mathrm{Rep}(H)$ as premodular categories. Let $A$ be the regular algebra of $\DD$. Then  $\dim(\mathcal{C}(G,q)_A^0)=|G|/|H|^2$.
\end{example}

Suppose the premodular category $\CC$ contains a Tannakian subcategory $\DD$ which is equivalent to $\Rep(H)$ for a finite group $H$ as premodular categories. Then $\DD \subset \DD'$ and $ \DD $ is a Tannakian subcategory of $\DD'$. We denote by $(\DD')_H$ the \emph{de-equivariantization} of $\DD'$ by $H$. Let $A$ be the regular algebra of $\DD$. By \cite[Proposition 4.56(i)]{DGNO}, the local $A$-module category $\CC_A^0$ is equivalent to  $(\DD')_H$ as premodular categories.

Another application of local module categories lies in the study of the \textit{Witt equivalence} of nondegenerate braided fusion categories introduced in \cite{DMNO}.

\begin{definition}
Nondegenerate braided fusion categories $\mathcal{C}$ and $\mathcal{D}$ are \emph{Witt equivalent} if there exist fusion categories $\mathcal{A}_1,\mathcal{A}_2$ such that $\mathcal{Z}(\mathcal{A}_1)\boxtimes\mathcal{C}\simeq\mathcal{Z}(\mathcal{A}_2)\boxtimes\mathcal{D}$ is a braided equivalence. The Witt equivalence class of $\CC$ is denoted by $[\CC]$.
\end{definition}

Two nondegenerate braided fusion categories $\CC$ and $\DD$ are Witt equivalent if there exist connected \'etale algebras $A\in\mathcal{C}$ and $B\in\mathcal{D}$ such that $\mathcal{C}_A^0\simeq\mathcal{D}_B^0$ as braided fusion categories (cf. \cite[Proposition 5.15]{DMNO}).

The  Witt equivalence classes of nondegenerate braided fusion categories form an abelian \emph{Witt group} $\mathcal{W}$ under the Deligne tensor product with $[\mathcal{C}]^{-1}=[\mathcal{C}^\mathrm{rev}]$ (see Equation \ref{eq:center}). The Witt group $\WW$  of nondegenerate braided fusion categories is a generalization of the Witt group $\WQ$ of nondegenerate quadratic forms of finite abelian groups. In particular, the subgroup $\WWp(p)$ of $\WW$ generated by the Witt equivalence classes of the pointed modular categories $\CC(G,q)$, where $p$ is a prime and $G$ is a finite abelian $p$-group, is canonically isomorphic to the classical Witt group $\WQ(p)$ of nondegenerate quadratic forms of  finite abelian $p$-groups  (see \cite[Section 5.3]{DMNO}).
\end{subsection}
\end{section}


\begin{section}{Higher Gauss sums and central charges}\label{sec:def}
	
\begin{subsection}{Definition and motivation}
	
Let $\CC$ be a modular category. The Gauss sums $\tau^{\pm}(\CC)$  of $\CC$ are defined as
\begin{equation}\label{class}
\tau^\pm(\mathcal{C})=\sum_{X\in\mathcal{O}(\mathcal{C})}\theta_X^{\pm1}\dim(X)^2.
\end{equation}
Their properties can be found in the literature such as  \cite{BakalovKirillov}, \cite{MugerSubfactor2}, \cite{tcat} and \cite{dong2015}. It is well-known that $\tau^+(\mathcal{C})\tau^-(\mathcal{C})=\dim(\mathcal{C})\ge 1$, and $\frac{\tau^+(\CC)}{\tau^-(\CC)}$ is a root of unity \cite{vafa}. In particular,
$|\tau^+(\mathcal{C})|=\sqrt{\dim (\CC)}$.  From this, the  definition of the (multiplicative) central charge of $\mathcal{C}$ can be defined as $\xi(\mathcal{C}):=\tau^+(\mathcal{C})/\sqrt{\dim(\mathcal{C})}$, which is also a root of unity.

The same definition \eqref{class} of Gauss sums $\tau^{\pm}(\CC)$ can be extended easily to a premodular category $\mathcal{C}$.  However, $\tau^{\pm}(\CC)$  could be zero which can be demonstrated in
the category  $\sVec$ of super vector spaces where  $\tau^+(\mathrm{sVec})=\tau^-(\mathrm{sVec})=1-1=0$ (cf. Example \ref{ex:group}).  Therefore, $\xi(\mathrm{sVec})$ is undefined with this definition.  Nevertheless, the notion of higher of Gauss sums can be generalized and studied for arbitrary premodular categories.

\begin{definition}\label{def:gausssum}
	Let $\mathcal{C}$ be a premodular category and $n\in\mathbb{Z}$.  The \emph{$\th{n}$ Gauss sum} of $\mathcal{C}$ is defined as
	\begin{equation*}
	\tau_{n}(\mathcal{C}):=\sum_{X\in\mathcal{O}(\mathcal{C})}\theta^n_X\dim(X)^2.
	\end{equation*}
\end{definition}

Note that $\overline{\tau_n(\mathcal{C})}=\tau_{-n}(\mathcal{C})$ as $\dim(X)\in\mathbb{R}$ for all $X\in\OO(\CC)$ (cf. \cite{ENO}).
\subsection{Gauss sums and Frobenius-Schur indicators}
For any object $X$ in a spherical fusion category $\CC$ and $n \in \BZ$, the $\th{n}$ \emph{Frobenius-Schur indicator} (FS-indicator) of $X$, denoted by $\nu_n(X)$,  is a scalar introduced in \cite{NSind, NS10}. The notion was also previously defined in the contexts of Hopf algebras and rational conformal field theory (cf. \cite{LM00}, \cite{Bantay}). We will simply define the $\th{n}$ FS-indicator of $\CC$ as
\begin{equation}
\nu_n(\CC):=\sum_{X\in\OO(\CC)}\dim(X)\nu_n(X)\,.
\end{equation}
\begin{remark} \label{r:qH}
	By virtue of \cite[Corollary 5.6]{NS10}, $\nu_{-n}(X)= \ol{\nu_n(X)}$ for $n \in \BZ$ and $X \in \CC$. Therefore, $\nu_{-n}(\CC)=\ol{\nu_n (\CC)}$ for $n \in \BZ$. Moreover, if $\CC = \Rep(H)$ for any semisimple quasi-Hopf algebra $H$ over $\BC$, then  $\nu_n(\CC) = \nu_n(H)$ where $H$ is considered as the regular representation of $H$.
\end{remark}

If $\CC$ is modular,  the modulus of $\tau_n(\CC)$ can be expressed in terms of $\nu_n(\CC)$ as stated in the following proposition, which is essentially proved in \cite[Proposition 5.5]{KMN} with a  different emphasis in the statement.
\begin{proposition}{\rm (\cite[Proposition 5.5]{KMN})} \label{p:gauss_sum}
	Let $\CC$ be a modular category. Then for any integer $n$, we have
	$$
	|\tau_n(\CC)|^2 = \dim(\CC)\nu_n(\CC)\,.
	$$
	In particular, $\dim(\CC)$ divides $|\tau_n(\CC)|^2$ in the ring of algebraic integers, and $\nu_n(\CC)$ is a totally non-negative cyclotomic integer for any $n \in \BZ$.\qed
\end{proposition}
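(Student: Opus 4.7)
The plan is to expand $|\tau_n(\CC)|^2$ as a double sum and match it with $\dim(\CC)\,\nu_n(\CC)$ via an explicit modular-data expression for $\nu_n(X)$. Since each $\dim(X)$ is real and each $\theta_X$ is a root of unity (so $\overline{\theta_X^n} = \theta_X^{-n}$), we have $\overline{\tau_n(\CC)} = \tau_{-n}(\CC)$, and therefore
$$|\tau_n(\CC)|^2 \;=\; \tau_n(\CC)\,\tau_{-n}(\CC) \;=\; \sum_{Y,Z\in\OO(\CC)} \theta_Y^n\,\theta_Z^{-n}\,\dim(Y)^2\,\dim(Z)^2.$$

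The main ingredient I would invoke is the following identity for higher FS indicators in a modular category, which is the computational content behind \cite[Proposition 5.5]{KMN}: for every simple $X$ and every integer $n$,
$$\nu_n(X) \;=\; \frac{1}{\dim(\CC)}\sum_{Y,Z\in\OO(\CC)} N_{Y,Z}^{X}\,\theta_Y^n\,\theta_Z^{-n}\,\dim(Y)\,\dim(Z).$$
This comes from evaluating the categorical trace defining $\nu_n(X)$ in \cite{NSind} via graphical calculus, using the nondegenerate braiding of $\CC$ to decompose the resulting link diagram into contributions indexed by simple summands of $Y\o Z$. Granted this formula, I would multiply by $\dim(X)$, sum over $X$, and swap the summations:
$$\nu_n(\CC) \;=\; \sum_{X} \dim(X)\,\nu_n(X) \;=\; \frac{1}{\dim(\CC)}\sum_{Y,Z} \theta_Y^n\,\theta_Z^{-n}\,\dim(Y)\,\dim(Z)\Bigl(\sum_{X} \dim(X)\,N_{Y,Z}^{X}\Bigr).$$
The inner sum equals $\dim(Y)\dim(Z)$ by the basic fusion identity $\dim(Y\o Z) = \sum_X N_{Y,Z}^X \dim(X)$, and substituting back identifies $\dim(\CC)\,\nu_n(\CC)$ with the expansion of $|\tau_n(\CC)|^2$ recorded above.

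The consequences follow immediately. Because all $\theta_X$ and $\dim(X)$ lie in $\BQ(e^{2\pi i/N})$ by \cite{NS10}, the displayed expression shows $\nu_n(\CC)$ is a cyclotomic integer, whence the main identity exhibits $\dim(\CC)$ as a divisor of $|\tau_n(\CC)|^2$ in the ring of algebraic integers. For total non-negativity, recall that on any cyclotomic field every Galois automorphism commutes with complex conjugation (both are power maps on roots of unity, and $(\BZ/N)^\times$ is abelian), so for every $\sigma\in\Gal(\BQ(e^{2\pi i/N})/\BQ)$,
$$\sigma(\dim(\CC))\,\sigma(\nu_n(\CC)) \;=\; \sigma(|\tau_n(\CC)|^2) \;=\; |\sigma(\tau_n(\CC))|^2 \;\geq\; 0,$$
and since $\dim(\CC)$ is totally positive, this forces $\sigma(\nu_n(\CC))\geq 0$. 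The main obstacle is the formula for $\nu_n(X)$ itself: this is the genuine graphical-calculus step, and it is where the modularity hypothesis enters essentially; once it is in hand, everything else is a short algebraic manipulation.
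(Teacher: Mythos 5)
Your argument is correct in its main thrust, and it is essentially the standard one: the paper itself offers no proof of this proposition (it simply cites \cite[Proposition 5.5]{KMN} and closes with a qed), and the engine of that cited result is exactly the formula you quote, namely the generalized Bantay formula $\nu_n(X)=\frac{1}{\dim(\CC)}\sum_{Y,Z}N_{Y,Z}^X\dim(Y)\dim(Z)(\theta_Y/\theta_Z)^n$ for modular categories, which is \cite[Theorem 7.5]{NS10}; you could cite it rather than re-derive it graphically. Given that formula, your manipulation (weight by $\dim(X)$, sum over $X$, use $\sum_X N_{Y,Z}^X\dim(X)=\dim(Y)\dim(Z)$, and compare with $\tau_n(\CC)\tau_{-n}(\CC)$) is correct, as is your total non-negativity argument: complex conjugation is central in $\Gal(\BQ(\zeta_N)/\BQ)$, so $\sigma(|\tau_n(\CC)|^2)=|\sigma(\tau_n(\CC))|^2\geq 0$, and total positivity of $\dim(\CC)$ forces $\sigma(\nu_n(\CC))\geq 0$.

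The one step that does not work as written is the integrality of $\nu_n(\CC)$. You say that because all $\theta_X$ and $\dim(X)$ lie in $\BQ(e^{2\pi i/N})$, ``the displayed expression shows $\nu_n(\CC)$ is a cyclotomic integer'' — but the displayed expression carries a factor $1/\dim(\CC)$, so it only shows that $\nu_n(\CC)$ lies in a cyclotomic field, not that it is an algebraic integer; and the divisibility claim ``$\dim(\CC)$ divides $|\tau_n(\CC)|^2$'' is precisely equivalent to that integrality, so as stated the reasoning is circular. The correct (and standard) fix is to use the definition $\nu_n(\CC)=\sum_X\dim(X)\nu_n(X)$ directly: each $\nu_n(X)$ is a cyclotomic integer (it is the trace of an endomorphism of finite order on the finite-dimensional space $\CC(\1,X^{\otimes n})$, cf. \cite{NSind, NS10}), and each $\dim(X)$ is a cyclotomic integer, so $\nu_n(\CC)$ is a cyclotomic integer; then the identity $|\tau_n(\CC)|^2=\dim(\CC)\nu_n(\CC)$ gives the divisibility. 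With that patch your proof is complete.
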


The preceding proposition can be further refined if  $\CC$ is a Drinfeld center. The following refinement is essentially proved in \cite[Lemma 3.8]{ShimizuComputation}.
\begin{proposition}{\rm (\cite[Lemma 3.8]{ShimizuComputation})} \label{p:center_gauss_sum1}
	Let $\DD$ be a spherical fusion category and $\CC =\ZZ(\DD)$. Then, for $n \in \BZ$,
	\begin{equation*}
	\tau_n(\mathcal{\CC}) = \dim (\DD) \nu_n(\DD). \qed
	\end{equation*}
\end{proposition}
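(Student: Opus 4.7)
The plan is to reduce the statement to the Ng--Schauenburg indicator formula that expresses the $n$-th Frobenius--Schur indicator of an object of $\DD$ in terms of the modular data of its Drinfeld center. Concretely, let $F \colon \ZZ(\DD) \to \DD$ denote the forgetful tensor functor. The key identity I would invoke is that, for every $X \in \OO(\DD)$,
$$
\dim(\DD)\,\nu_n(X) \;=\; \sum_{V \in \OO(\ZZ(\DD))} \dim(V)\,[F(V):X]_\DD\,\theta_V^n,
$$
which is essentially \cite[Lemma 3.8]{ShimizuComputation}. Its derivation rests on the description from \cite{NSind, NS10} of $\nu_n(X)$ as the categorical trace of a distinguished $n$-th root of the cyclic permutation endomorphism of $\Hom(\1,X^{\otimes n})$, combined with the adjunction between $F$ and its right adjoint (the induction functor) and the fact that the associated algebra $I(\1) \in \ZZ(\DD)$ has a canonical Lagrangian structure.

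Granting that identity, I would multiply both sides by $\dim(X)$, sum over $X \in \OO(\DD)$, and swap the order of summation to obtain
$$
\dim(\DD)\,\nu_n(\DD) \;=\; \sum_{V \in \OO(\ZZ(\DD))} \theta_V^n\,\dim(V) \sum_{X \in \OO(\DD)} \dim(X)\,[F(V):X]_\DD.
$$
The inner sum is exactly $\dim_\DD(F(V))$, since $F(V) \cong \bigoplus_X [F(V):X]_\DD \cdot X$ in $\DD$ and the pivotal dimension is additive on direct sums. Because the spherical structure on $\ZZ(\DD)$ is by convention inherited from $\DD$, the forgetful functor $F$ preserves pivotal dimensions, so $\dim_\DD(F(V)) = \dim_{\ZZ(\DD)}(V)$. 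Substituting collapses the right-hand side to
$$
\sum_{V \in \OO(\ZZ(\DD))} \theta_V^n\,\dim(V)^2 \;=\; \tau_n(\ZZ(\DD)),
$$
giving the desired equality.

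The only genuine obstacle is securing the indicator formula; once it is in hand, the remainder is a two-line manipulation of sums. A useful sanity check is that combining the proposition with Proposition \ref{p:gauss_sum} forces $\nu_n(\ZZ(\DD)) = |\nu_n(\DD)|^2$, which is consistent with the expected factorization $\nu_n(\ZZ(\DD)) = \nu_n(\DD)\,\nu_n(\DD\rev)$ together with the conjugation identity $\nu_{-n}(\DD) = \overline{\nu_n(\DD)}$ of Remark \ref{r:qH}.
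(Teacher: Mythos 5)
Your argument is correct, but note that the paper itself offers no proof of this proposition: it is stated with a tombstone and attributed wholesale to \cite[Lemma 3.8]{ShimizuComputation}, which is precisely the summed identity $\tau_n(\ZZ(\DD))=\dim(\DD)\nu_n(\DD)$. What you call the key identity is not Shimizu's lemma but the Ng--Schauenburg indicator formula \cite[Theorem 4.1]{NgSchaunburgSpherical}, proved there for positive $n$; granting it, your summation swap, the additivity of the pivotal dimension, and the fact that the forgetful functor preserves dimensions (the paper's standing convention that $\ZZ(\DD)$ carries the spherical structure inherited from $\DD$) do yield the proposition, and this is essentially how the cited lemma is established, so you are supplying the proof the paper outsources rather than taking a genuinely different route. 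Two small points. First, since the statement is asserted for all $n\in\BZ$, you should dispose of $n\le 0$ explicitly: for negative $n$ apply complex conjugation to the identity for $-n$, using $\nu_{-n}(X)=\overline{\nu_n(X)}$ (Remark \ref{r:qH}), $\overline{\theta_V}=\theta_V^{-1}$, and the reality of dimensions and multiplicities; for $n=0$ the convention $\nu_0(X)=\dim(X)$ coming from the generalized indicators of \cite{NS10} reduces both sides to $\dim(\DD)^2$. The appeal to a Lagrangian structure on $I(\1)$ is not needed anywhere. Second, your sanity check has a slip: Frobenius--Schur indicators do not see the braiding, so $\nu_n(\DD\rev)=\nu_n(\DD)$ rather than $\overline{\nu_n(\DD)}$; the factorization heuristic $\ZZ(\DD)\simeq\DD\boxtimes\DD\rev$ only applies when $\DD$ is modular, in which case $\nu_n(\DD)\ge 0$ by Proposition \ref{p:gauss_sum}, so the check is still consistent.
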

Thus, we have the immediate corollary for the case of semisimple quasi-Hopf algebras.
\begin{corollary} \label{c:center_gauss_sum2}
	Let $\CC=\ZZ(\Rep(H))$  for some semisimple quasi-Hopf algebra $H$. Then for any
$n \in \BZ$,
	\begin{equation}\label{eq:prop1eq2}
	\tau_n(\mathcal{C})=\dim(H)\nu_n(H).
	\end{equation}
	If in  addition $H$ is a Hopf algebra, then
	\begin{equation}\label{eq:prop1eq3}
	\tau_2(\mathcal{C})=\dim(H) \Tr(S_H) \in \BZ
	\end{equation}
	where $S_H$ is the antipode of $H$.
\end{corollary}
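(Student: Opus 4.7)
The corollary is essentially a packaging of Proposition \ref{p:center_gauss_sum1} with standard facts about semisimple (quasi-)Hopf algebras, so my plan is to deduce both equations by specialization rather than by any new categorical argument.

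For the first equation, the plan is to apply Proposition \ref{p:center_gauss_sum1} with $\DD = \Rep(H)$. Since $H$ is semisimple, $\Rep(H)$ is a spherical fusion category whose global dimension equals $\dim(H)$ (the usual vector space dimension), so $\dim(\DD) = \dim(H)$. It remains to identify $\nu_n(\Rep(H))$ with $\nu_n(H)$. By definition, $\nu_n(\Rep(H)) = \sum_V \dim(V)\nu_n(V)$ where $V$ ranges over simple $H$-modules; this is precisely the evaluation of $\nu_n$ on the regular representation, which by Remark \ref{r:qH} equals $\nu_n(H)$. Substituting gives $\tau_n(\CC) = \dim(H)\nu_n(H)$, as claimed.

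For the second equation, I specialize to $n = 2$ and invoke the classical identity $\nu_2(H) = \Tr(S_H)$ for semisimple Hopf algebras (due to Linchenko--Montgomery, and implicitly present in the Hopf-algebraic definition of $\nu_2$ via the integral on $H$). This yields $\tau_2(\CC) = \dim(H)\Tr(S_H)$. The integrality $\Tr(S_H) \in \BZ$ follows from the Larson--Radford theorem, which asserts that a semisimple Hopf algebra over $\BC$ is involutory, i.e., $S_H^2 = \id_H$. Consequently $S_H$ is diagonalizable with eigenvalues in $\{\pm 1\}$, whence $\Tr(S_H)$ is the difference of two non-negative integers.

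There is no real obstacle here: the main thing to verify carefully is the matching of $\nu_n(\Rep(H))$ with $\nu_n(H)$ in the quasi-Hopf setting, which is exactly the content of Remark \ref{r:qH}, and the invocation of the Larson--Radford theorem, which is only used for the Hopf (not quasi-Hopf) case where the antipode is a genuine algebra anti-automorphism. Both ingredients are essentially off-the-shelf, and the corollary is really a dictionary between categorical and Hopf-algebraic invariants.
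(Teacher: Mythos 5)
Your proposal is correct and follows essentially the same route as the paper: equation (\ref{eq:prop1eq2}) is Proposition \ref{p:center_gauss_sum1} applied to $\DD=\Rep(H)$ together with Remark \ref{r:qH} (and the standard identification $\dim(\Rep(H))=\dim(H)$), while (\ref{eq:prop1eq3}) is the Linchenko--Montgomery identity $\nu_2(H)=\Tr(S_H)$, i.e.\ the same citation \cite{LM00} the paper invokes. Your appeal to Larson--Radford for the integrality of $\Tr(S_H)$ is a harmless alternative to observing that $\nu_2$ of each simple lies in $\{0,\pm1\}$, so nothing of substance differs.
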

\begin{proof}
	Equation (\ref{eq:prop1eq2}) follows directly from Remark \ref{r:qH} and Proposition \ref{p:center_gauss_sum1}, and Equation (\ref{eq:prop1eq3}) is an immediate consequence of (\ref{eq:prop1eq2}) and \cite[Theorem 2.7(3)]{LM00}.
\end{proof}

Now, we can apply  Corollary \ref{c:center_gauss_sum2} to compute the higher Gauss sums   of the Drinfeld double any finite group in the following corollary.
\begin{corollary}\label{c:group_charge}
	Let $G$ be a finite group. The $\th{n}$ Gauss sum of $\ZZ(\Rep(G))$ is
	$$
	\t_n = \t_{-n}= |G|\cdot |\{x \in G\mid x^n=1\}| >0
	$$
	for all $n \in \BZ$.
\end{corollary}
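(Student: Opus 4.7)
The plan is to invoke Corollary~\ref{c:center_gauss_sum2} with $H = \BC[G]$, the group algebra of $G$, which is a semisimple Hopf algebra of dimension $|G|$. This reduces computing $\tau_n(\ZZ(\Rep(G)))$ to computing the $n$th Frobenius--Schur indicator $\nu_n(\BC[G])$ of the regular representation, then multiplying by $|G|$.

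To evaluate $\nu_n(\BC[G])$, I would use the classical formula $\nu_n(V) = \frac{1}{|G|}\sum_{g\in G}\chi_V(g^n)$ for an irreducible representation $V$ with character $\chi_V$. Since the regular representation decomposes as $\BC[G] \cong \bigoplus_V V^{\oplus \dim V}$ over the irreducibles, additivity of the indicator gives
\begin{equation*}
\nu_n(\BC[G]) \;=\; \sum_V (\dim V)\, \nu_n(V) \;=\; \frac{1}{|G|}\sum_{g\in G}\sum_V \chi_V(1)\chi_V(g^n).
\end{equation*}
Column orthogonality of the character table then yields $\sum_V \chi_V(1)\chi_V(g^n) = |G|\cdot \delta_{g^n,1}$, so that $\nu_n(\BC[G]) = |\{x\in G : x^n=1\}|$. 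Substituting into Corollary~\ref{c:center_gauss_sum2} produces the main formula.

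For the remaining assertions, $\tau_n = \tau_{-n}$ is automatic: one can either use $\overline{\tau_n(\CC)} = \tau_{-n}(\CC)$ together with the fact that the right-hand side is a real integer, or observe that $x \mapsto x^{-1}$ is a bijection between $\{x\in G : x^n=1\}$ and $\{x\in G : x^{-n}=1\}$. Positivity is immediate because the identity of $G$ always satisfies $e^n = 1$, so the cardinality is at least $1$. There is no substantive obstacle here; the corollary is really a direct illustration of Corollary~\ref{c:center_gauss_sum2} together with the classical Frobenius--Schur character count.
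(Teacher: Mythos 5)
Your proposal is correct and follows essentially the same route as the paper: reduce to $\nu_n(\BC[G])$ via Corollary \ref{c:center_gauss_sum2} and identify it with $|\{x\in G: x^n=1\}|$, then handle $\tau_{-n}$ by the reality/positivity of that count. The only difference is cosmetic — you spell out the classical character-orthogonality computation of $\nu_n(\BC[G])$, which the paper simply recalls as known.
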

\begin{proof}
	Recall that $\nu_n(\Rep(G)) = \nu_n(\BC[G]) =|\{x \in G\mid x^n=1\}|$ for any non-negative integer $n$. Since
	$\nu_{n}(\Rep(G))$ is a positive integer,  $\nu_{-n}(\Rep(G)) = \nu_{n}(\Rep(G))$.   The equation follows immediately from Corollary \ref{c:center_gauss_sum2}.
\end{proof}

\begin{example}\label{ex:zero_gauss_sum} In general, the higher Gauss sums of modular categories could be zero.  Here, we list two examples of integral modular categories of even dimension with zero second Gauss sums.
	\begin{enumerate}
		\item[(i)] If $\CC=\CC(\BZ_2, q)$ where  $q(1)=i$ for the nontrivial simple object $X$, then $\tau_2(\CC) = 0$ and so
		the second central charge is undefined despite $\mathcal{C}(\mathbb{Z}_2,q)$ being modular.
		\item[(ii)] It has been shown in \cite[Section 5]{Shi1} that there exists a semisimple Hopf algebra $H$ of even dimension with $\Tr(S_H)=0$. Therefore, if $\CC=\ZZ(\Rep(H))$, then $\tau_2(\CC)=0$ by Corollary \ref{c:center_gauss_sum2}.
	\end{enumerate}
\end{example}
Another important consequence of Proposition \ref{p:gauss_sum} is the invariance of $\nu_n(\DD)$ of Morita equivalence classes of any pseudounitary fusion category $\DD$.
\begin{corollary}
Let $\AA$ be a pseudounitary fusion category, and $\BB$ a fusion category  Morita equivalent to $\AA$, i.e., $\ZZ(\AA)$ and $\ZZ(\BB)$ are equivalent braided fusion categories. Then $\BB$ is also pseudounitary and
$$
\nu_n(\AA) = \nu_n(\BB)\quad \text{for all }n \in \BZ,
$$
where the underlying spherical structures of $\AA$ and $\BB$ are the canonical ones determined by their pseudounitarity.
\end{corollary}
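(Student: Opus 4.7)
The plan is to reduce the claim to Proposition \ref{p:center_gauss_sum1}, applied to both $\AA$ and $\BB$, and then use the braided equivalence $\ZZ(\AA) \simeq \ZZ(\BB)$ together with the uniqueness of the canonical spherical structure on a pseudounitary fusion category to equate the two Gauss sums.

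First I would verify that $\BB$ is pseudounitary. For any fusion category $\DD$ one has the well-known identities $\dim(\ZZ(\DD)) = \dim(\DD)^2$ and $\FPdim(\ZZ(\DD)) = \FPdim(\DD)^2$, and both the global dimension and the Frobenius-Perron dimension of the center are invariants of its braided equivalence class. Taking positive square roots, the hypothesis $\ZZ(\AA)\simeq\ZZ(\BB)$ forces $\dim(\AA) = \dim(\BB)$ and $\FPdim(\AA) = \FPdim(\BB)$, whence pseudounitarity of $\AA$ yields $\dim(\BB) = \FPdim(\BB)$. In particular, $\BB$ carries its canonical spherical structure, and under the inherited structures $\ZZ(\AA)$ and $\ZZ(\BB)$ are themselves pseudounitary modular categories.

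Next I would argue that any braided equivalence $F:\ZZ(\AA)\to\ZZ(\BB)$ automatically intertwines the two canonical spherical structures. A pseudounitary fusion category admits a \emph{unique} spherical structure under which the categorical dimension of every simple object equals its Frobenius-Perron dimension. Since $F$ is a braided monoidal equivalence it preserves Frobenius-Perron dimensions of simple objects, so by this uniqueness it matches the canonical spherical structures on the two centers. The ribbon isomorphism $\theta=u\inv j$ is then preserved as well, because $F$ transports the braiding (and hence the Drinfeld isomorphism $u$) and the canonical pivotal structure $j$. Consequently $F$ induces a bijection of simple objects preserving both the twist and the categorical dimension, and so $\tau_n(\ZZ(\AA)) = \tau_n(\ZZ(\BB))$ for every $n\in\BZ$.

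Combining these observations with Proposition \ref{p:center_gauss_sum1} gives
$$\dim(\AA)\,\nu_n(\AA) \;=\; \tau_n(\ZZ(\AA)) \;=\; \tau_n(\ZZ(\BB)) \;=\; \dim(\BB)\,\nu_n(\BB),$$
and dividing by the positive common value $\dim(\AA)=\dim(\BB)$ yields the desired identity $\nu_n(\AA)=\nu_n(\BB)$. The main subtlety, and the principal obstacle, is the second step: ensuring that an abstract braided equivalence of Drinfeld centers respects the canonical spherical structures inherited from $\AA$ and $\BB$. This is precisely where pseudounitarity is essential --- without it, the inherited spherical structure on the center need not be singled out among the possible choices, and the Gauss sums $\tau_n$ of the two centers could a priori differ.
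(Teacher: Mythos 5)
Your proof is correct and follows essentially the same route as the paper: establish pseudounitarity of $\BB$ from $\dim(\ZZ(\BB))=\FPdim(\ZZ(\BB))$, upgrade the braided equivalence of centers to an equivalence of modular (ribbon) categories with the canonical spherical structures, and then divide the identity $\dim(\AA)\nu_n(\AA)=\tau_n(\ZZ(\AA))=\tau_n(\ZZ(\BB))=\dim(\BB)\nu_n(\BB)$ coming from Proposition \ref{p:center_gauss_sum1} by $\dim(\AA)=\dim(\BB)$. The only difference is that for the key middle step the paper simply cites \cite[Corollary 6.2]{NSind}, whereas you prove it directly via uniqueness of the spherical structure with $\dim=\FPdim$ on a pseudounitary fusion category and preservation of Frobenius--Perron dimensions under monoidal equivalences, which is a valid substitute.
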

\begin{proof}
  Since $\AA$ is pseudounitary, so is $\ZZ(\AA)$. Therefore, $\ZZ(\BB)$ is pseudounitary and so
  $$
  \dim(\BB)^2=\dim(\ZZ(\BB)) = \FPdim(\ZZ(\BB))=\FPdim(\BB)^2 = \FPdim(\AA)^2.
   $$
   Since  $\dim(\BB)$, $\FPdim(\BB)$ and $\FPdim(\AA)$ are positive real numbers, $\dim(\BB) = \FPdim(\BB)=\dim(\AA)$ and hence $\BB$ is pseudounitary. Assuming both $\AA$ and $\BB$ are equipped with the canonical spherical structures  determined by their pseudounitarity, $\ZZ(\AA)$ and $\ZZ(\BB)$ are equivalent modular categories by \cite[Corollary 6.2]{NSind}. Therefore, $\t_n(\ZZ(\AA))= \t_n(\ZZ(\BB))$ for all $n \in \BZ$. Since $\dim(\AA) =\dim(\BB)$, it follows from Proposition \ref{p:center_gauss_sum1} that $\nu_n(\AA) = \nu_n(\BB)$ for all $n \in \BZ$.
\end{proof}

\subsection{Anomaly and central charge}
The quotient $\tau_1(\CC)/\sqrt{\dim (\CC)}$ of a modular category $\CC$, called the \emph{central charge} of $\CC$, is a square root of $\a=\tau_1(\CC)/\tau_{-1}(\CC)$ since $\dim(\CC) =\tau_1(\CC) \tau_{-1}(\CC) = |\tau_1(\CC)|^2$. The choice of positive square root of $\dim (\CC)$ determines a square root of  $\a$, which  is natural but not particularly a canonical one.  One can easily extend these notions to higher degrees for premodular categories.
\begin{definition}\label{def:gausssum2}
	Let $ \CC $ be a premodular category. For $n\in\mathbb{Z}$, we respectively define the \emph{$ \th{n} $ anomaly} and  the \emph{$ \th{n} $ (multiplicative) central charge} of $ \CC $  as
	\begin{equation}
	\alpha_n(\CC) := \frac{\tau_n(\CC)}{\tau_{-n}(\CC)} \quad\text{and}\quad  \xi_n(\mathcal{C}):=\frac{\tau_n(\mathcal{C})}{|\tau_n(\mathcal{C})|}
	\end{equation}
	provided $\tau_n(\CC)\neq0$.
\end{definition}
The first anomaly appears as an important quantity in the 3-dimensional topological quantum field theory defined by a modular category $ \CC $ (see for example \cite{TuraevBook}).  The motivation for our definitions of higher Gauss sums and central charge is closely related to the Reshetikhin-Turaev invariants of links and 3-manifolds arising from modular categories.

Let $\mathcal{C}$ be a modular category and $D$ the positive square root of $\dim (\CC)$.  Denote the RT-invariant of a 3-manifold $M$ corresponding to $\mathcal{C}$ by $\RT(M)$. Let $n \in \BN$. By \cite[Section II.2.2]{TuraevBook}, the RT-invariant of the lens space $L(n, 1)$ associated to $\mathcal{C}$ is given by
\begin{equation}
\RT(L(n, 1))=D^{-3}\tau_{-1}\tau_n
\end{equation}
where the category $\mathcal{C}$ in the notation of the Gauss sums has been suppressed for brevity. The same manifold with reversed orientation, denoted by $-L(n, 1)$, has RT-invariant
\begin{equation}
\RT(-L(n, 1))=D^{-1}
(\tau_{-1})^{-1}\tau_{-n}.
\end{equation}
Observe that $\RT(-L(n, 1))=\overline{\RT(L(n, 1))}$. If any one of the invariants is nonzero, we have
\begin{equation}
\frac{\RT(L(n, 1))}{\RT(-L(n, 1))}=D^{-2}(\tau_{-1})^{2}\a_n=\frac{\a_n}{\a_1}.
\end{equation}
\end{subsection}


\begin{subsection}{Basic properties and examples}

Some straightforward observations about Definitions \ref{def:gausssum} and \ref{def:gausssum2} are collected here for future use.
\begin{lemma}\label{lem1}
	For all premodular categories $\mathcal{C}$ and integers $n$ such that $\tau_n(\mathcal{C})\neq0$,
	\begin{itemize}
		\item[\textnormal{(i)}] $\xi_n(\mathcal{C}\rev)=\xi_{-n}(\CC) = \ol{\xi_{n}(\CC)}$,
		\item[\textnormal{(ii)}]  $\xi_n(\mathcal{C})$ and all of its Galois conjugates have modulus 1,
		\item[\textnormal{(iii)}] $\xi_n(\mathcal{C})$ is a root of unity if and only if $\xi_n(\mathcal{C})$ is an algebraic integer. In this case, if $N$ is the FS-exponent of $\CC$, then $\ord(\a_n(\CC)) \mid N$ if $N$ is even, and $\ord(\a_n(\CC))\mid 2N$ otherwise.
	\end{itemize}
\end{lemma}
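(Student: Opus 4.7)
The plan is to handle the three parts essentially independently; all three follow from elementary identities together with the fact that $\tau_n(\CC)$ lies in the cyclotomic field $\BQ(\zeta_N)$, where $N$ is the FS-exponent of $\CC$.

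For (i), the first step is to note that in $\CC^{\rev}$ the twist satisfies $\tilde\theta_X = \theta_X^{-1}$ for every simple $X$, which gives
$$
\tau_n(\CC^{\rev}) \;=\; \sum_{X\in\OO(\CC)}\theta_X^{-n}\dim(X)^2 \;=\; \tau_{-n}(\CC).
$$
Since $\dim(X)$ is real and $\theta_X^{n}$ is a root of unity, we also have $\ol{\tau_n(\CC)} = \tau_{-n}(\CC)$, so both of the claimed equalities follow after dividing through by the (common) modulus, and in particular $\xi_n(\CC^{\rev})$ is defined whenever $\xi_n(\CC)$ is.

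For (ii), the plan is to work with $\xi_n(\CC)^2 = \tau_n(\CC)/\tau_{-n}(\CC) = \a_n(\CC)$, which lies in $\BQ(\zeta_N)$. Since $\Gal(\BQ(\zeta_N)/\BQ)$ is abelian, every Galois automorphism $\s$ commutes with complex conjugation on $\BQ(\zeta_N)$, so
$$
\s(\a_n(\CC)) \;=\; \frac{\s(\tau_n(\CC))}{\s(\ol{\tau_n(\CC)})} \;=\; \frac{\s(\tau_n(\CC))}{\ol{\s(\tau_n(\CC))}}
$$
has modulus one (using that $\tau_n(\CC)\neq 0$ forces $\s(\tau_n(\CC))\neq 0$). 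Taking square roots then shows every Galois conjugate of $\xi_n(\CC)$ has modulus one as well.

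For (iii), the forward implication is trivial and the reverse is Kronecker's theorem applied to (ii). Once $\xi_n(\CC)$ is known to be a root of unity, so is $\a_n(\CC) = \xi_n(\CC)^2 \in \BQ(\zeta_N)$, and the stated order bound comes from the fact that the torsion subgroup of $\BQ(\zeta_N)^\times$ is cyclic of order $N$ when $N$ is even and of order $2N$ when $N$ is odd (the latter because $\BQ(\zeta_N) = \BQ(\zeta_{2N})$ for odd $N$). The only mildly subtle step anywhere is the commuting of $\s$ with complex conjugation in (ii); the rest is bookkeeping.
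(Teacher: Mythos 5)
Your proposal is correct and follows essentially the same route as the paper: the identities $\tau_{\pm n}(\CC)=\tau_{\mp n}(\CC^{\rev})$ and $\ol{\tau_n(\CC)}=\tau_{-n}(\CC)$ for (i), the observation that $\a_n(\CC)=\tau_n(\CC)/\ol{\tau_n(\CC)}\in\BQ(\zeta_N)$ has all Galois conjugates of modulus $1$ (hence so does its square root $\xi_n(\CC)$) for (ii), and Kronecker's theorem plus the structure of roots of unity in $\BQ(\zeta_N)$ for (iii). Your spelling out of the commutation of $\sigma$ with complex conjugation on the abelian extension $\BQ(\zeta_N)$ just makes explicit what the paper leaves implicit.
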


\begin{proof}
	The first  claim follows from $\tau_{\pm n} (\CC)=\tau_{\mp n}(\CC^\text{rev})$ and $\tau_n(\mathcal{C})=\overline{\tau_{-n}(\mathcal{C})}$.  Note that $\a_n(\CC) =\frac{\t_n(\CC)}{\ol{\t}_{n}(\CC)} \in\BQ(e^{2\pi i/N})$. Therefore, all the Galois conjugates of $\a_n(\mathcal{C})$ have modulus 1, and so do $\xi_n(\CC)$.  If $\xi_n(\CC)$ is an algebraic integer, then $\xi_n(\CC)$ is a root of unity by a theorem of Kronecker (see \cite{kronecker2}). Hence, $\alpha_n(\CC)$ is also a root of unity. Since $\a_n \in  \BQ(e^{2\pi i/N})$, if $\a_n(\CC)$ is a root of unity, then $\ord(\a_n(\CC))\mid N$ or $\ord(\a_n(\CC))\mid 2N$ respectively depends on  whether  $N$ is even or odd.
\end{proof}

\begin{example}
	Accessible formulas for the dimensions and twists of $\mathcal{C}(\mathfrak{g}_2,3)$ can be found in Sections 2.3.4 of \cite{schopieray2}, and one computes
	\begin{equation*}
	\xi_3(\mathcal{C})=\dfrac{1}{2\sqrt{2}}(\sqrt{7}-i),
	\end{equation*}
	which has minimal polynomial $2x^4-3x^2+2$ and thus is not a root of unity.  Corollary \ref{cor:RootOfUnity} below describes when such a phenomenon is possible.
\end{example}

The higher Gauss sums and central charges also respect the Deligne tensor product of premodular categories.
\begin{lemma}\label{lem:mult}
	If $\mathcal{C}$ and $\mathcal{D}$ are premodular, then for all $n\in\mathbb{Z}$
	\begin{equation}\tau_n(\mathcal{C}\boxtimes\mathcal{D})=\tau_n(\mathcal{C})\tau_n(\mathcal{D}).\end{equation}
	If $\t_n(\CC)\t_n(\DD)\ne 0$, then we also have $\xi_n(\mathcal{C}\boxtimes\mathcal{D})=\xi_n(\mathcal{C})\xi_n(\mathcal{D})$.
\end{lemma}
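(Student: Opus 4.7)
The plan is to unpack the definition of $\tau_n$ directly on the simple objects of the Deligne tensor product and factor the sum. The simple objects of $\mathcal{C}\boxtimes\mathcal{D}$ are precisely $X\boxtimes Y$ with $X\in\mathcal{O}(\mathcal{C})$, $Y\in\mathcal{O}(\mathcal{D})$, the pivotal dimensions satisfy $\dim(X\boxtimes Y)=\dim(X)\dim(Y)$, and by the description of the ribbon structure on $\mathcal{C}\boxtimes\mathcal{D}$ recalled just after Example 2.4, we have $\theta_{X\boxtimes Y}=\theta_X\otimes\theta_Y$, which as a scalar is $\theta_X\theta_Y$.

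Given these three facts, I would compute
\begin{equation*}
\tau_n(\mathcal{C}\boxtimes\mathcal{D})=\sum_{\substack{X\in\mathcal{O}(\mathcal{C})\\ Y\in\mathcal{O}(\mathcal{D})}}\theta_{X\boxtimes Y}^n\dim(X\boxtimes Y)^2=\sum_{X,Y}(\theta_X\theta_Y)^n\dim(X)^2\dim(Y)^2,
\end{equation*}
which factors as
\begin{equation*}
\left(\sum_{X\in\mathcal{O}(\mathcal{C})}\theta_X^n\dim(X)^2\right)\left(\sum_{Y\in\mathcal{O}(\mathcal{D})}\theta_Y^n\dim(Y)^2\right)=\tau_n(\mathcal{C})\tau_n(\mathcal{D}).
\end{equation*}

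For the central charge statement, assume $\tau_n(\mathcal{C})\tau_n(\mathcal{D})\ne 0$, so $\tau_n(\mathcal{C}\boxtimes\mathcal{D})\ne 0$ by what we just proved and all three $\xi_n$'s are defined. Since $|{\cdot}|$ is multiplicative on $\mathbb{C}$,
\begin{equation*}
\xi_n(\mathcal{C}\boxtimes\mathcal{D})=\frac{\tau_n(\mathcal{C})\tau_n(\mathcal{D})}{|\tau_n(\mathcal{C})\tau_n(\mathcal{D})|}=\frac{\tau_n(\mathcal{C})}{|\tau_n(\mathcal{C})|}\cdot\frac{\tau_n(\mathcal{D})}{|\tau_n(\mathcal{D})|}=\xi_n(\mathcal{C})\xi_n(\mathcal{D}).
\end{equation*}

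There is no real obstacle here; the only thing to check carefully is that $\theta$ on $\mathcal{C}\boxtimes\mathcal{D}$ really multiplies componentwise on simple objects, which is immediate from the stated rule $\theta_{X\boxtimes Y}=\theta_X\otimes\theta_Y$ and the fact that $\theta_X,\theta_Y$ are scalars on simples.
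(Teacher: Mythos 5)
Your proof is correct and follows essentially the same route as the paper: both arguments use the multiplicativity of $\dim$ and $\theta$ on simple objects $X\boxtimes Y$ to factor the defining sum for $\tau_n$, and then deduce the statement for $\xi_n$ directly from its definition. Your extra remark spelling out the multiplicativity of the modulus is just a slightly more explicit version of the paper's final sentence.
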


\begin{proof}
	The result follows from the fact that dimensions and twists are multiplicative with respect to $\boxtimes$, i.e. $\dim(X\boxtimes Y)=\dim_\mathcal{C}(X)\dim_\mathcal{D}(Y)$ and $\theta_{X\boxtimes Y}=\theta_X \theta_Y$ for any $X \in \OO(\CC)$ and $Y \in \OO(\DD)$. Hence
	\begin{align*}
	\tau_n(\mathcal{C}\boxtimes\mathcal{D})&=\sum_{X\boxtimes Y\in\mathcal{O}(\mathcal{C}\boxtimes\mathcal{D})}\theta_{X\boxtimes Y}^n\dim(X\boxtimes Y)^2 \\
	&=\sum_{X\in\mathcal{O}(\mathcal{C})}\sum_{Y\in\mathcal{O}(\mathcal{D})}\theta^n_X \theta^n_Y \dim_\CC(X)^2\dim_\DD(Y)^2 \\
	&=\tau_n(\mathcal{C})\tau_n(\mathcal{D}).
	\end{align*}
	The last statement follows directly from the definition of $\xi_n$.
\end{proof}
\begin{corollary}\label{cor:mod}
	If $\mathcal{C}$ is modular, then $\xi_n(\mathcal{Z}(\mathcal{C}))=1$ for all $n\in\mathbb{Z}$ such that $\tau_n(\mathcal{C})\neq0$.
\end{corollary}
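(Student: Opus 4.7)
The plan is to reduce the computation to a product of a Gauss sum with its complex conjugate, using Müger's identification of the Drinfeld center of a modular category with the Deligne product of the category and its reverse.

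First, since $\CC$ is modular (hence nondegenerate), equation~(\ref{eq:center}) gives a braided equivalence $\ZZ(\CC) \simeq \CC \boxtimes \CC\rev$. By Lemma~\ref{lem:mult}, I would then compute
\begin{equation*}
\tau_n(\ZZ(\CC)) \;=\; \tau_n(\CC)\,\tau_n(\CC\rev).
\end{equation*}

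Next, I would identify $\tau_n(\CC\rev)$ with $\ol{\tau_n(\CC)}$. This uses two observations already in the text: the twist on $\CC\rev$ is $\theta^{-1}$, so by Definition~\ref{def:gausssum} we have $\tau_n(\CC\rev) = \tau_{-n}(\CC)$; and by the remark after Definition~\ref{def:gausssum} (since pivotal dimensions are real), $\tau_{-n}(\CC) = \ol{\tau_n(\CC)}$. Substituting yields
\begin{equation*}
\tau_n(\ZZ(\CC)) \;=\; \tau_n(\CC)\,\ol{\tau_n(\CC)} \;=\; |\tau_n(\CC)|^2.
\end{equation*}

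Finally, under the hypothesis $\tau_n(\CC)\ne 0$, the quantity $|\tau_n(\CC)|^2$ is a positive real number, hence in particular $\tau_n(\ZZ(\CC))\ne 0$ and $\xi_n(\ZZ(\CC))$ is defined. From the formula $\xi_n(\ZZ(\CC)) = \tau_n(\ZZ(\CC))/|\tau_n(\ZZ(\CC))|$ and the positivity just established, the central charge equals $1$.

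There is no real obstacle here: the statement is a one-line consequence of Müger's structure theorem together with Lemmas~\ref{lem1} and~\ref{lem:mult}. The only point worth flagging in the write-up is to verify that $\tau_n(\CC)\ne 0$ is indeed the correct hypothesis to ensure $\xi_n(\ZZ(\CC))$ is defined (rather than the a priori weaker condition $\tau_n(\ZZ(\CC))\ne 0$), which follows from the displayed identity $\tau_n(\ZZ(\CC))=|\tau_n(\CC)|^2$.
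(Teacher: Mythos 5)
Your proof is correct and follows essentially the same route as the paper: decompose $\ZZ(\CC)\simeq\CC\boxtimes\CC\rev$ via Equation (\ref{eq:center}), apply the multiplicativity of Lemma \ref{lem:mult}, and use $\tau_n(\CC\rev)=\tau_{-n}(\CC)=\ol{\tau_n(\CC)}$ to conclude $\tau_n(\ZZ(\CC))=|\tau_n(\CC)|^2>0$. Your version has the small merit of making explicit that $\tau_n(\ZZ(\CC))\neq0$ under the stated hypothesis, a point the paper's one-line proof leaves implicit.
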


\begin{proof}
	Apply Lemma \ref{lem1} (ii) to $\mathcal{Z}(\mathcal{C})\simeq\mathcal{C}\boxtimes\mathcal{C}^\text{rev}$ (see Equation (\ref{eq:center})).
\end{proof}

\begin{example}\label{wrong} One can easily see that there are families of inequivalent premodular categories which have the same higher multiplicative central charges. The first of such an example can be obtained from finite groups. Recall from Corollary \ref{c:group_charge} that for any finite group $G$ the higher Gauss sums of $\ZZ(\Rep(G))$ are positive integers. Therefore, all the higher central charges of $\ZZ(\Rep(G))$  are all equal to 1. In fact, the same property holds for $\Rep(G)$. Since $\t_n(\Rep(G))=|G|$, $\xi_n(\mathrm{Rep}(G))=1$ for all $n\in\mathbb{Z}$.
\end{example}

The modular categories in Example \ref{wrong} are all contained in the trivial Witt equivalence class $[\mathrm{Vec}]$ (Section \ref{sec:local}), but the following example illustrates that higher central charges of $\CC$ could be different from 1, and they are the first central charges of the Galois conjugates of $\CC$.

\begin{example}\label{wrong2}
	Let $p\in\BN$ be an  odd prime and $q_a: \BZ_p \to \BC^\times$ a nondegenerate quadratic form such that $q_a(1) = e^{2\pi i a/p}$ for some integer $a$ not divisible by $p$. Then the Gauss sum of $\CC_a=\CC(\BZ_p, q_a)$ is identical to the classical quadratic Gauss sum of $q_a$, which is given by
	$$
	\t_1(\CC_a) = \sum_{j=0}^{p-1} q_a(j) =
	\jac{a}{p}\sqrt{\jac{-1}{p} p}
	$$
	where $\jac{a}{p}$ is the Legendre symbol. Thus, for $p \nmid n$, the $\th{n}$ Gauss sum and multiplicative charge of $\CC_a$ are respectively
	$$
	\t_n(\CC_a) = \sum_{j=0}^{p-1} q_{an}(j) = \jac{an}{p}\sqrt{\jac{-1}{p} p}  \quad\text{and}\quad \xi_n(\CC_a) = \jac{an}{p}\sqrt{\jac{-1}{p}}\,.
	$$
	Therefore, $\t_n(\CC_a)=\t_1(\CC_{na})$ and $\xi_n(\CC_\a) = \xi_1(\CC_{na})$. Since $\CC_a$ is equivalent to $\CC_b$ if and only if $\t_1(\CC_a)= \t_1(\CC_b)$, there are only two inequivalent modular categories among $\CC_a$ for a given prime $p$ which are determined by the Legendre symbol $\jac{a}{p}$. These two inequivalent modular categories, which can be distinguished by their central charges $\xi_1$, are also generators of the Witt group $\WW_{pt}(p)$.
\end{example}

In light of Propositions \ref{p:gauss_sum} and \ref{p:center_gauss_sum1}, one can see the higher Gauss sums are closely related to the higher Frobenius-Schur indicators, and they are invariants of premodular categories. The following example is an application of the higher Gauss sums to distinguish the Drinfeld centers of the Tambara-Yamagami categories.
\begin{example}\label{ex:one}
	A Tambara-Yamagami ($\TY$-)category $\CC$ is a fusion category with $\OO(\CC) = A \sqcup \{m\}$ where $A$ is a finite abelian group and the fusion rules are given by
	$$
	m \o  m = \sum_{a \in A} a,
	\quad
	a \o m = m \o a =m,
	\quad
	a \o b = ab
	\quad\text{for } a, b \in A.
	$$
	A $\TY$-category is completely determined by the abelian group $A$, a symmetric nondegenerate bicharacter $\chi$ of $A$, and a square root $r$ of $|A|\inv$, and is denoted by $\TY(A, \chi, r)$ (cf. \cite{tamyam}).  Every $\TY$-category is pseudounitary (cf. \cite{ENO}) and its Drinfeld center is consequently a modular category. The $\TY$-categories defined by the abelian group $A=\BZ^2_2$ are representation categories of quasi-Hopf algebras and they are completely distinguished by their higher Frobenius-Schur indicators (cf. \cite{NS07b}). The group $\BZ_2^2$ admits two inequivalent nondegenerate symmetric bicharacters, namely the standard symmetric bicharacter $\chisym$ and the alternating bicharacter $\chialt$.  Using the higher FS-indicators computed in \cite{NS07b} or \cite{Shi1}, we have the following table of higher Gauss sums for the Drinfeld centers of these $\TY$-categories.
	$$
	\begin{array}{|c|c|cccccccc|}
	\hline
	\CC & H & \t_1 & \t_2 & \t_3 & \t_4 & \t_5 & \t_6 & \t_7 & \t_8 \\
	\hline
	\TY(A, \chisym,\frac{1}{2} )& \BC[D_8] & 8 & 48 & 8 & 64 & 8 & 48 & 8 & 64 \\
	\TY(A, \chialt,\frac{-1}{2} ) & \BC[Q_8] & 8 & 16 & 8 & 64 & 8 & 16 & 8 & 64 \\
	\TY(A, \chisym,\frac{1}{2} ) & K & 8 & 48 & 8 & 32 & 8 & 48 & 8 & 64\\
	\TY(A, \chialt,\frac{-1}{2} ) & K_u & 8 & 16 & 8 & 32 & 8 & 16 & 8 & 64\\
	\hline
	\end{array}
	$$
	Here $\CC$ is equivalent to $\Rep(H)$ as spherical fusion categories, $\t_n$ is the $\th{n}$ Gauss sum of $\ZZ(\CC)$, $K$ is the Kac algebra of dimension 8, and $K_u$ is a twist of $K$ defined in \cite{NS07b}. Since these four  sequences higher Gauss sums are different, the Drinfeld centers of these $\TY$-categories are inequivalent modular categories. However, all the higher central charges of these modular categories are 1.
\end{example}

The repetition of higher central charges is also revealed by the following proposition.
\begin{proposition}\label{th:ferst}
	Let $\mathcal{C}$ be a premodular category.  Then $\t_1(\CC) \t_{-2}(\CC) \in \BR$. In particular, if $\t_1(\CC) \t_{-2}(\CC) \ne 0$, then $\a_1(\CC)=\a_2(\CC)$ or $\xi_1(\CC)=\pm \xi_2(\CC)$.
\end{proposition}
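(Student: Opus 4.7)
Since $\overline{\t_n(\CC)} = \t_{-n}(\CC)$, the claim $\t_1(\CC)\t_{-2}(\CC)\in\BR$ is equivalent to the algebraic identity $\t_1(\CC)\t_{-2}(\CC) = \t_{-1}(\CC)\t_2(\CC)$, which in turn (when the relevant quantities are nonzero) is equivalent to $\a_1(\CC) = \a_2(\CC)$. My plan is to establish this identity.

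For a modular $\CC$, the preceding subsection essentially already supplies the proof. The formulas $\RT(L(n,1)) = D^{-3}\t_{-1}\t_n$ and $\RT(-L(n,1)) = D^{-1}\t_{-1}^{-1}\t_{-n}$ combine into
\[
\frac{\RT(L(n,1))}{\RT(-L(n,1))} = \frac{\a_n(\CC)}{\a_1(\CC)}.
\]
Setting $n = 2$ and using that the lens space $L(2,1) \cong \BR P^3$ is amphichiral as an oriented $3$-manifold (the condition $q^2 \equiv -1 \pmod p$ for amphichirality of $L(p,q)$ is automatic at $p=2$), the left-hand side equals $1$, forcing $\a_2(\CC) = \a_1(\CC)$.

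For a general premodular $\CC$ the Reshetikhin--Turaev invariants are not defined, so I would pursue a direct graphical-calculus argument in the ribbon category $\CC$. The idea is to interpret $\t_1(\CC)\t_{-2}(\CC)$ as the evaluation of a Kirby-colored ribbon diagram consisting of two closed circles carrying framings $+1$ and $-2$ respectively (the $\OnePlusTwoMinus$ configuration used by the paper), and $\t_{-1}(\CC)\t_2(\CC)$ as the mirrored $\OneMinusTwoPlus$. The ribbon identity $c_{Y,X}\circ c_{X,Y} = \theta_{X\otimes Y}\circ(\theta_X\otimes\theta_Y)^{-1}$, implemented graphically through the $\StretchCrossing$ move, permits trading a positive twist on one strand against a negative twist on another strand at the cost of a crossing adjustment. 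Performing this move on each simple-object stratum of the Kirby color transforms $\OnePlusTwoMinus$ into $\OneMinusTwoPlus$ without changing the evaluation, producing the identity uniformly in any ribbon category.

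The hardest part is carrying out this graphical transformation in full detail in the premodular case: one has to verify that the fusion-rule sums rearrange exactly so that the two evaluations agree, without invoking Kirby absorption of the regular color (which is available only in the modular setting). The modular case bypasses this technical point entirely via the topological amphichirality of $\BR P^3$.
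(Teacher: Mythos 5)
Your modular-case argument is correct and is a genuinely different, topological route: amphichirality of $L(2,1)\cong\mathbb{RP}^3$ (indeed $q^2\equiv -1 \pmod 2$) together with $\RT(-M)=\overline{\RT(M)}$ forces $\RT(L(2,1))\in\BR$, hence $\t_{-1}(\CC)\t_2(\CC)\in\BR$ and so $\t_1(\CC)\t_{-2}(\CC)\in\BR$; the nonvanishing needed for the ratio formula is harmless since the claim is trivial when $\t_2(\CC)=0$. But the proposition is stated for arbitrary premodular categories, and there your proposal has a genuine gap that you yourself flag: the diagrammatic transformation of the $(+1,-2)$ Kirby-colored configuration into its mirror is only sketched, and the verification you call ``the hardest part'' is never carried out. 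As written, you have proved only the modular special case, which is not the content of the statement.

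The missing step is exactly what the paper supplies, and your worry that ``Kirby absorption of the regular color is available only in the modular setting'' is the point to resolve rather than concede. The only surgery-calculus ingredient required is the twist-absorption lemma \cite[Lemma 3.1.5]{BakalovKirillov}: a $\pm1$-twisted circle colored by the pseudo-object $\sum_{X\in\OO(\CC)}\dim_\CC(X)\,X$ encircling a strand acts on that strand as $\t_{\pm1}(\CC)\,\theta^{\mp1}$. Its proof uses only the ribbon structure and the fusion rules, not invertibility of the S-matrix (which is what handle slides and the killing property need), so it holds verbatim for premodular $\CC$ --- this is precisely the remark the paper makes. Granting it, one writes $\t_1(\CC)\t_{-2}(\CC)$ as $\t_1(\CC)$ times a Kirby-colored circle with $\theta^{-2}$ inserted, splits $\theta^{-2}$ into two $\theta^{-1}$'s, runs the lemma backwards to trade the scalar $\t_1(\CC)$ and one $\theta^{-1}$ for a linked $+1$-twisted Kirby circle, isotopes using rigidity and then sphericity to the mirror picture, and applies the lemma forwards to read off $\t_{-1}(\CC)\t_2(\CC)$. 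Without proving this premodular lemma (or completing the equivalent diagram computation), your proposal does not establish the proposition as stated.
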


\begin{proof}
	Using the graphical calculus conventions of \cite{BakalovKirillov}, we have
	\begin{equation}\label{eq:3}
	\tau_1(\mathcal{C})\tau_{-2}(\mathcal{C})=\tau_1(\mathcal{C})
	\begin{tikzpicture}[baseline=0]
	\node (a) at (0,.1) {};
	\node[left = 0.1cm of a] (b) {};
	\node[right = 0.5cm of b,circle,draw] (c) {$\theta^{-2}$};
	\node[above = 0.75cm of b] (b2) {};
	\node[above = 0.4cm of c] (c2) {};
	\node[below = 0.75cm of b] (b3) {};
	\node[below = 0.4cm of c] (c3) {};
	\draw (b2.center) edge (b.center);
	\draw (b3.center) edge (b.center);
	\draw (c2.center) edge (c);
	\draw (c3.center) edge (c);
	\draw[in=90,out=90] (b2.center) edge (c2.center);
	\draw[in=-90,out=-90] (b3.center) edge (c3.center);
	\end{tikzpicture}
	=\tau_1(\mathcal{C})\,\,
	\begin{tikzpicture}[baseline=0]
	\node (a) at (0,.1) {};
	\node[left = 0cm of a,circle,draw] (b) {$\theta^{-1}$};
	\node[right = 0.5cm of b,circle,draw] (c) {$\theta^{-1}$};
	\node[above = 0.75cm of b] (b2) {};
	\node[above = 0.75cm of c] (c2) {};
	\node[below = 0.75cm of b] (b3) {};
	\node[below = 0.75cm of c] (c3) {};
	\draw (b2.center) edge (b);
	\draw (b3.center) edge (b);
	\draw (c2.center) edge (c);
	\draw (c3.center) edge (c);
	\draw[in=90,out=90] (b2.center) edge (c2.center);
	\draw[in=-90,out=-90] (b3.center) edge (c3.center);
	\end{tikzpicture}
	\end{equation}
	where the line is labeled by the pseudo-object $\sum_{X \in \OO(\CC)} \dim_\CC(X) X$.
	Note that \cite[Lemma 3.1.5]{BakalovKirillov} holds for premodular categories by the same proof. Applying \cite[Lemma 3.1.5]{BakalovKirillov} to the last term of \eqref{eq:3}, we have
	\begin{equation}\label{eq:4}
	\tau_1(\mathcal{C})\tau_{-2}(\mathcal{C})
	=
	\OnePlusTwoMinus
	=
	\StretchCrossing,
	\end{equation}
	where the second equality follows from the rigidity of $\CC$. Since $\CC$ is spherical, we have
	\begin{equation}
	\tau_1(\mathcal{C})\tau_{-2}(\mathcal{C})
	=
	\OneMinusTwoPlus
	=
	\tau_{-1}(\mathcal{C})\tau_2(\mathcal{C}),
	\end{equation}
	where the last equality follows from  \cite[Lemma 3.1.5]{BakalovKirillov}. The second assertion follows directly from the definition.
\end{proof}

\section{Arithmetic properties of higher Gauss sums: modular case}\label{sec:arith}

In this section, we study the action of the Galois group of $\Qb$ on the higher Gauss sums of a modular category $\CC$. We obtain a relation of the higher Gauss sums in terms of the action of automorphisms of $\Qb$ in Theorem \ref{thm:Galois}. In particular, we prove in Corollary \ref{cor:RootOfUnity} that those Gauss sums $\t_n(\CC)$ with $n$ relatively prime to $\ord(T_\CC)$ are nonzero $d$-numbers, and the corresponding central charges are roots of unity. We also extend a result of M\"uger on the first Gauss sum of the Drinfeld center of a spherical fusion category to higher Gauss sums in Theorem \ref{thm:GaussSumOfCenter}.

Let $\mathcal{C}$ be a modular category with the unnormalized S- and T-matrices $S$ and $T$ respectively. Set $s:=\frac{1}{\sqrt{\dim(\CC)}} S$. Then, $s^2_{X,\1}=\dim(X)^2/\dim(\CC)$.  For any $n\in\BN$, denote $ \zeta_n:=e^{2\pi i/n}$.  It was quite well known (see \cite{deBoerGoeree, CosteGannon94, ENO}) that all entries of $ S $ and $ T $ are cyclotomic integers.
It has been recently shown (see \cite[Proposition 5.7]{NS10})  that  they are elements of $\Q(\zeta_{\ord(T)}) $. Thus, the Galois group $\Gal(\Q(\zeta_{\ord(T)})/\Q)$ acts on the modular data of $\CC$. The reader is referred to \cite{dong2015} or \cite{CosteGannon94} for more details of Galois group actions on the modular data.

For any automorphism $\sigma$ of  $\Qb$, there exists a unique permutation $ \hat{\sigma} $ on $\OO(\cat{C})$ such that
\begin{equation}
\sigma\left(\frac{s_{X,Y}}{s_{\1,Y}}\right)
=
\frac{s_{X,\hat{\sigma}(Y)}}{s_{\1 ,\hat{\sigma}(Y)}}
\end{equation}
for all $ X, Y \in \OO(\cat{C}) $. Moreover, $\sigma$ gives rise to a sign function $\e_\sigma: \mathcal{O}(\mathcal{C})\to\{\pm1\}$ such that
\begin{equation}
\sigma(s_{X,Y})
=
\epsilon_\sigma(X)s_{\hat{\sigma}(X),Y}
=
\epsilon_\sigma(Y)s_{X,\hat{\sigma}(Y)}
\end{equation}
for all $X,Y \in \OO(\cat{C})$. In particular, for any $ X \in \OO(\cat{C}) $, we have
\begin{equation}\label{eq:GaloisOnNormalizedDimension}
\sigma(s_{X, \1}^2)
=
s_{\hat{\sigma}(X),\1}^2.
\end{equation}
Fix any $ \gamma \in \BC $ such that
\begin{equation}\label{eq:NormalizationFactor}
\gamma^3
=
\xi_1(\cat{C})
=
\frac{\tau_1(\cat{C})}{\sqrt{\dim(\CC)}},
\end{equation}
define $t := \gamma^{-1}T$.  Then the assignment
\begin{equation}
\rho:
\begin{bmatrix}
0 & -1\\
1 & 0
\end{bmatrix}
\mapsto s,
\
\begin{bmatrix}
1 & 1\\
0 & 1
\end{bmatrix}
\mapsto t
\end{equation}
uniquely determines a linear representation of $\SL(2, \BZ) $ (cf. \cite[Remark 3.1.9]{BakalovKirillov}).

Note that $t$ is a diagonal matrix with entries indexed by $X\in\mathcal{O}(\mathcal{C})$. We will denote the diagonal entry of $ t $ corresponding to an $ X \in \OO(\CC) $ by $t_X $. In other words, $t_X=\theta_X\gamma^{-1}$ for any $X\in\OO(\CC)$. According to \cite[Theorem II]{dong2015},
\begin{equation}\label{eq:GaloisOnT}
\sigma^2(t_{X}) = t_{\hat{\sigma}(X)}.
\end{equation}
Because $\theta_{\1} = 1 $, Equation (\ref{eq:GaloisOnT}) states
\begin{equation}
\sigma^2\left(\frac{1}{\gamma}\right) = \frac{\theta_{\hat{\sigma}(\1)}}{\gamma},
\end{equation}
which implies \cite[Proposition 4.7]{dong2015}
\begin{equation}\label{eq:GammaAndTheta}
\frac{\gamma}{\sigma^2(\gamma)} = \theta_{\hat{\sigma}(\1)}.
\end{equation}

For any $ n \in \mathbb{Z}$ relatively prime to $\ord(T)$, there exists an automorphism $\sigma_n$ of $\Qb$ such that $\sigma_n: \zeta_{\ord(T)}\mapsto\zeta_{\ord(T)}^n$ (cf. \cite[Theorem VI.3.1]{LangAlgebra}). In particular, $ \sigma_n(\theta_X) = \theta_X^n $ for all $ X \in \OO(\CC) $. Let $ \tilde{n} $ denote the multiplicative inverse of $n$ modulo $\mathrm{ord}(T)$.  Then
\begin{equation}\label{eq:MultiplicativeInverse}
\sigma^{-1}_{\tilde{n}}(\theta_X) = \sigma_{n}(\theta_X) = \theta_X^{n}
\end{equation}
for all $ X \in \OO(\CC)$. Note that such $\sigma_n$ is not unique but its restriction on $\BQ(\zeta_{\ord(T)})$ is unique. Now we can state our theorem of Galois group action on the higher Gauss sums.

\begin{theorem}\label{thm:Galois}
	Let $\mathcal{C}$ be a modular category, $N=\ord(T_\CC)$, and  $n\in\BZ$ relatively prime to $N$. Then, for $a \in \BZ$,
	\begin{equation}
	\tau_{an}(\CC)=\sigma(\tau_a(\CC))\frac{\dim(\CC)}{\sigma(\dim(\CC))}\theta_{\hat{\sigma}(\1)}^{an} \quad and\label{eq:TauN}
	\end{equation}
\begin{equation}
	\nu_{an}(\CC)=\sigma(\nu_a(\CC))\frac{\dim(\CC)}{\sigma(\dim(\CC))}, \label{eq:NuN}
	\end{equation}
	where $\tilde{n}$ is the multiplicative inverse of $n$ modulo $N$, and $\sigma$ is any automorphism  of $\Qb$ such that $\sigma(\zeta_N)=\zeta_N^{\tilde{n}}$. In particular,  $\tau_n(\CC)\neq 0$ and $\xi_n(\CC)$ is well-defined.
\end{theorem}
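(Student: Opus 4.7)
\emph{Proof plan for Theorem \ref{thm:Galois}.}
The strategy is to apply the automorphism $\sigma=\sigma_{\tilde n}$ termwise to the definition of $\tau_a(\CC)$ and compare with $\tau_{an}(\CC)$ after reindexing by the permutation $\hat\sigma$ of $\OO(\CC)$. The algebraic input needed is a single ``twist transformation rule'' which I would derive first: combining \eqref{eq:GaloisOnT} with \eqref{eq:GammaAndTheta} yields
\[
\theta_{\hat\sigma(X)} \;=\; \theta_{\hat\sigma(\1)}\,\sigma^{2}(\theta_X)
\qquad\text{for every }X\in\OO(\CC).
\]
Since $\sigma(\zeta_N)=\zeta_N^{\tilde n}$ and each $\theta_X$ is an $N$-th root of unity, we have $\sigma(\theta_X)=\theta_X^{\tilde n}$ and $\sigma^2(\theta_X)=\theta_X^{\tilde n^2}$, so the rule becomes $\theta_{\hat\sigma(X)}=\theta_{\hat\sigma(\1)}\theta_X^{\tilde n^2}$. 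Inverting and raising to an appropriate power (using $n\tilde n\equiv 1\pmod N$) then gives
\[
\theta_{\hat\sigma^{-1}(Y)}^{a\tilde n} \;=\; \theta_Y^{an}\,\theta_{\hat\sigma(\1)}^{-an},
\]
which is the identity that will make the reindexing collapse cleanly.

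Next I would compute $\sigma(\tau_a(\CC))$ directly. Because all relevant scalars lie in $\BQ(\zeta_N)$, the automorphism $\sigma$ distributes over the sum, and by \eqref{eq:GaloisOnNormalizedDimension} we have $\sigma(\dim(X)^{2})=\tfrac{\sigma(\dim(\CC))}{\dim(\CC)}\dim(\hat\sigma(X))^{2}$, so
\[
\sigma(\tau_a(\CC)) \;=\; \frac{\sigma(\dim(\CC))}{\dim(\CC)}\sum_{X\in\OO(\CC)} \theta_X^{a\tilde n}\,\dim(\hat\sigma(X))^{2}.
\]
Now substitute $Y=\hat\sigma(X)$ in the sum and apply the displayed identity above to rewrite $\theta_{\hat\sigma^{-1}(Y)}^{a\tilde n}$ in terms of $\theta_Y^{an}$. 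The $\theta_{\hat\sigma(\1)}^{-an}$ factor is independent of $Y$ and can be pulled out, and the remaining sum is exactly $\tau_{an}(\CC)$. Solving for $\tau_{an}(\CC)$ yields \eqref{eq:TauN}.

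For the indicator formula \eqref{eq:NuN}, I would invoke Proposition \ref{p:gauss_sum} to write $\dim(\CC)\,\nu_m(\CC)=\tau_m(\CC)\tau_{-m}(\CC)$ for every $m$, apply \eqref{eq:TauN} to both $\tau_{an}(\CC)$ and $\tau_{-an}(\CC)$, and use that the $\theta_{\hat\sigma(\1)}^{\pm an}$ factors cancel and that $\tau_a(\CC)\tau_{-a}(\CC)=\dim(\CC)\nu_a(\CC)$. This immediately produces $\nu_{an}(\CC)=\sigma(\nu_a(\CC))\tfrac{\dim(\CC)}{\sigma(\dim(\CC))}$. Finally, nonvanishing of $\tau_n(\CC)$ (the case $a=1$) follows because $\tau_1(\CC)\ne 0$ — since $\tau_1(\CC)\tau_{-1}(\CC)=\dim(\CC)>0$ — and the remaining factors on the right-hand side of \eqref{eq:TauN} are manifestly nonzero; well-definedness of $\xi_n(\CC)$ is then automatic.

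The main obstacle is bookkeeping: keeping $n$ and $\tilde n$, $\sigma$ and $\sigma^{-1}$, and the two applications of $\sigma^2$ versus $\sigma$ straight, while being careful that each quantity manipulated lies in $\BQ(\zeta_N)$ (so the choice of extension of $\sigma$ to $\Qb$ is irrelevant). Deriving the twist transformation rule cleanly from \eqref{eq:GaloisOnT} and \eqref{eq:GammaAndTheta}, and then performing the single reindexing $Y=\hat\sigma(X)$, is what does all the work.
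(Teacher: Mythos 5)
Your proposal is correct and follows essentially the same route as the paper: apply $\sigma$ termwise to $\tau_a(\CC)$, use the Galois equivariance of the normalized dimensions and of the $t$-matrix together with \eqref{eq:GammaAndTheta}, reindex by the permutation $\hat{\sigma}$, and deduce \eqref{eq:NuN} from Proposition \ref{p:gauss_sum} and nonvanishing from $\tau_1(\CC)\tau_{-1}(\CC)=\dim(\CC)>0$. Your preliminary ``twist transformation rule'' $\theta_{\hat{\sigma}(X)}=\theta_{\hat{\sigma}(\1)}\theta_X^{\tilde{n}^2}$ is just a clean repackaging (eliminating $\gamma$ up front) of the paper's manipulation of $t_X=\theta_X\gamma^{-1}$ via $\sigma^{-1}\sigma^{2}$, so the two arguments coincide in substance.
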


\begin{proof}
	Fix a $3^{rd}$ root $\gamma$ of $ \xi_1(\CC)$ (cf. Equation (\ref{eq:NormalizationFactor})).  By Equations (\ref{eq:GaloisOnNormalizedDimension}), (\ref{eq:GaloisOnT}) and (\ref{eq:MultiplicativeInverse}), we have
	\begin{equation*}
	\begin{aligned}
	\sigma
	\left(
	\frac{\tau_a(\CC)}{\dim(\CC)\gamma^a}
	\right)
	&=
	\sigma
	\left(
	\sum_{X \in \OO(\CC)}
	s^2_{X,\1} t^a_X
	\right)\\
	&=
	\sum_{X \in \OO(\CC)}
	\sigma(s^2_{X,\1})
	\sigma(t^a_X)\\
	&=
	\sum_{X \in \OO(\CC)}
	s^2_{\hat{\sigma}(X),\1}
	\sigma^{-1}(\sigma^2(t^a_X)) \quad\text{by \eqref{eq:GaloisOnNormalizedDimension}}\\
	&=
	\sum_{X \in \OO(\CC)}
	s^2_{\hat{\sigma}(X),\1}
	\sigma^{-1}(t^a_{\hat\sigma(X)}) \quad \text{ by \eqref{eq:GaloisOnT}} \\
	&=
	\sum_{X \in \OO(\CC)}
	\frac{\dim(\hat{\sigma}(X))^2 \sigma^{-1}(\theta^a_{\hat{\sigma}(X)})}
	{\dim(\CC)\sigma^{-1}(\gamma^a)}\\
	&=
	\frac{1}
	{\dim(\CC)\sigma^{-1}(\gamma^a)}
	\sum_{X \in \OO(\CC)}
	\dim(\hat{\sigma}(X))^2 \theta^{an}_{\hat{\sigma}(X)}
	\quad
	\text{by \eqref{eq:MultiplicativeInverse}}
	\\
	&=
	\frac{\tau_{an}(\CC)}{\dim(\CC) \sigma^{-1}(\gamma^a)},
	\end{aligned}
	\end{equation*}
	where the last equality is based on the fact that $ \hat\sigma $ is a permutation on $ \OO(\CC) $. Therefore, with $M:=\sigma(\tau_a(\CC))\dim(\CC)/\sigma(\dim(\CC))$ for brevity,	
	\begin{equation}
	\tau_{an}(\CC)= M \frac{\sigma^{-1}(\gamma^a)}{\sigma(\gamma^a)}
=M\sigma^{-1}\left(\frac{\gamma^a}{\sigma^2(\gamma^a)}\right)
=M\sigma^{-1}(\theta^a_{\hat{\sigma}(\1)})
=M\theta^{an}_{\hat{\sigma}(\1)}
	\end{equation}
	by Equations (\ref{eq:GammaAndTheta}) and (\ref{eq:MultiplicativeInverse}). This proves \eqref{eq:TauN}. The equality \eqref{eq:NuN} is then a consequence of Proposition \ref{p:gauss_sum}.  The last statement follows immediately from the fact that $|\t_1(\CC)|^2=\dim (\CC) >1$ as $\CC$ is modular.
\end{proof}

Recall from \cite{codegrees} that a \emph{$d$-number} is defined as an algebraic integer whose principal ideal in the ring of algebraic integers is invariant under the action of the absolute Galois group. It is immediate from the definition that the subset of $d$-numbers in the ring of algebraic integers is closed under multiplication and taking square roots, and that any algebraic unit is a $d$-number. Moreover it is shown in \cite[Corollary 1.4]{codegrees} that if $ \CC $ is a spherical fusion category, then $ \dim(\CC) $ is a $d$-number.

\begin{corollary}\label{cor:RootOfUnity}
	Let $ \CC $ be a modular  category,   $N=\ord(T_\CC)$, $n\in\mathbb{Z}$ with $\gcd(n,N)=1$ and $\tilde{n}$ a multiplicative inverse of $n$ modulo $N$, and $\sigma$ is any automorphism  of $\Qb$ such that $\sigma(\zeta_N)=\zeta_N^{\tilde{n}}$. Then,
	\begin{itemize}
		\item[\textnormal{(a)}] the $\th{n}$ anomaly of $\CC$ is given by \begin{equation}
		\alpha_n(\CC) = \sigma(\a_1(\CC))\cdot\theta^{2n}_{\hat{\sigma}(\1)}\,.
		\end{equation}
		In particular,  $\alpha_n(\CC) $ and $ \xi_n(\CC) $ are both roots of unity such that $ \xi_n(\CC)^{4N} = 1$.
		\item[\textnormal{(b)}] The $\th{n}$ Gauss sum of $\CC$ is a $d$-number.
	\end{itemize}
\end{corollary}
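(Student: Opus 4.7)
The plan is to extract both statements directly from Theorem~\ref{thm:Galois} applied with $a = \pm 1$, bootstrapping off two known facts: $\xi_1(\CC)$ is a root of unity (classical for modular categories), and $\dim(\CC)$ is a $d$-number by \cite[Corollary~1.4]{codegrees}.

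For part (a), I would substitute $a = 1$ and $a = -1$ into Equation~\eqref{eq:TauN}; the dimension factor $\dim(\CC)/\sigma(\dim(\CC))$ is identical in both expressions, so dividing yields
\begin{equation*}
\alpha_n(\CC) \;=\; \sigma(\alpha_1(\CC))\,\theta_{\hat\sigma(\1)}^{2n},
\end{equation*}
which is the displayed formula. Since $\alpha_1(\CC) = \xi_1(\CC)^2$ is a root of unity, so is any Galois conjugate $\sigma(\alpha_1(\CC))$; and $\theta_{\hat\sigma(\1)}$ is a root of unity because ribbon twists have finite order. Hence $\alpha_n(\CC)$ is a root of unity, and $\xi_n(\CC)$ is one as well since $\xi_n(\CC)^2 = \alpha_n(\CC)$. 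For the order bound, both $\alpha_1(\CC)$ and $\theta_{\hat\sigma(\1)}$ lie in $\BQ(\zeta_N)$, and any root of unity in $\BQ(\zeta_N)$ has order dividing $2N$ regardless of the parity of $N$; therefore $\ord(\alpha_n(\CC)) \mid 2N$, so $\xi_n(\CC)^{4N} = 1$.

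For part (b), I would first argue that $\tau_1(\CC)$ itself is a $d$-number, and then transport the property to $\tau_n(\CC)$. The identity $\tau_1(\CC)^2 = \xi_1(\CC)^2 \dim(\CC)$, in which $\xi_1(\CC)^2$ is an algebraic unit, shows that the principal ideal $(\tau_1(\CC)^2)$ coincides with $(\dim(\CC))$, which is Galois-invariant. Thus $\tau_1(\CC)^2$ is a $d$-number, and since the set of $d$-numbers is closed under taking square roots, so is $\tau_1(\CC)$. Now invoke the formula $\tau_n(\CC) = \sigma(\tau_1(\CC)) \cdot \dim(\CC) \cdot \sigma(\dim(\CC))^{-1} \cdot \theta_{\hat\sigma(\1)}^n$ from Theorem~\ref{thm:Galois}: the last factor is an algebraic unit, and because $\tau_1(\CC)$ and $\dim(\CC)$ are $d$-numbers, $\sigma$ preserves their principal ideals, so as fractional ideals $(\tau_n(\CC)) = (\tau_1(\CC))$. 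Galois-invariance of this ideal transfers to $\tau_n(\CC)$, which is a nonzero algebraic integer (a cyclotomic integer, as a sum of products of roots of unity and $\dim(X)^2$), completing the proof.

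The only real technical point is the bootstrap identifying $\tau_1(\CC)$ as a $d$-number; once that is in place, Theorem~\ref{thm:Galois} together with the closure properties of $d$-numbers does the rest with essentially no extra work.
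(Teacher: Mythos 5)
Your proof is correct. Part (a) follows the paper's argument essentially verbatim: apply Theorem \ref{thm:Galois} with $a=\pm 1$, cancel the dimension factor, and use that $\alpha_1(\CC)\in\BQ(\zeta_N)$ is a root of unity together with the finiteness of the order of $\theta_{\hat\sigma(\1)}$; your explicit derivation of the bound $\xi_n(\CC)^{4N}=1$ from the roots of unity in $\BQ(\zeta_N)$ is also what Lemma \ref{lem1}(iii) encodes. Part (b), however, is a genuinely different route. The paper writes $\tau_n(\CC)^2=\dim(\CC)\,\nu_n(\CC)\,\alpha_n(\CC)$ via Proposition \ref{p:gauss_sum} and then must show $\nu_n(\CC)$ is an algebraic unit, which it does by importing Propositions 3.6 and 3.8 of \cite{paul} (units at prime indices coprime to $N$) and Dirichlet's theorem on primes in arithmetic progressions to reduce $n$ to such a prime. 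You bypass Frobenius--Schur indicators entirely: from $\tau_1(\CC)^2=\xi_1(\CC)^2\dim(\CC)$, with $\xi_1(\CC)$ a root of unity and $\dim(\CC)$ a $d$-number, you conclude $\tau_1(\CC)$ is a $d$-number (closure under square roots), and then the formula $\tau_n(\CC)=\sigma(\tau_1(\CC))\frac{\dim(\CC)}{\sigma(\dim(\CC))}\theta^n_{\hat\sigma(\1)}$ exhibits $\tau_n(\CC)$ as $\sigma(\tau_1(\CC))$ times an algebraic unit (note $\dim(\CC)/\sigma(\dim(\CC))$ is a unit precisely because $\dim(\CC)$ is a $d$-number), so $(\tau_n(\CC))=(\tau_1(\CC))$ is Galois-stable. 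Your argument is more self-contained, needing only classical facts about $\tau_{\pm1}$ and the $d$-number property of $\dim(\CC)$; the paper's argument costs the external input from \cite{paul} plus Dirichlet, but it yields the extra arithmetic fact that $\nu_n(\CC)$ is an algebraic unit, which is of independent interest. One cosmetic caution: phrase the ideal manipulation multiplicatively (as a unit factor) rather than via ``fractional ideals'' of the non-Noetherian ring of all algebraic integers, though the content is the same.
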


\begin{proof}
	By Theorem \ref{thm:Galois},  we have
	\begin{equation}
	\begin{aligned}
	\alpha_n(\CC)
	=
	\frac{\tau_n(\CC)}{\tau_{-n}(\CC)}
	=
	\frac
	{\sigma(\tau_1(\CC)) \theta^{n}_{\hat{\sigma}(\1)}}
	{\sigma(\tau_{-1}(\CC)) \theta^{-n}_{\hat{\sigma}(\1)}}
	=
	\sigma(\alpha_1(\CC)) \cdot \theta^{2n}_{\hat{\sigma}(\1)}.
	\end{aligned}
	\end{equation}
	It is known (for example, \cite[Section 3]{BakalovKirillov}) that $ \alpha_1 \in \Q(\zeta_{N})$ is a root of unity. Therefore, $ \alpha_n(\CC) $ is a root of unity in $ \Q(\zeta_{N}) $. As a square root of $ \alpha_n(\CC) $, $ \xi_n(\CC) $ is also a root of unity, this completes the proof of part (a).
	
	\par Now, by Proposition \ref{p:gauss_sum}, we have
	\begin{equation}
	\tau_n(\CC)^2 = |\tau_n(\CC)|^2\a_{n}(\CC) =\dim(\CC)\nu_n(\CC)\alpha_n(\CC).
	\end{equation}
	By Proposition 3.6 and Proposition 3.8 of \cite{paul}, $ \nu_q(\CC) $ is an algebraic unit for all prime $q \nmid N$. By the Dirichlet prime number theorem, there exists a prime number $q$ such that $q \equiv n \pmod N$. Hence, $\nu_q(\CC)=\nu_n(\CC)$ and so $ \nu_n(\CC) $ is an algebraic unit.  Since $\dim(\CC)$ is a $d$-number, $\t_n^2(\CC)$ is a $d$-number and so are its square roots.
\end{proof}

For the Drinfeld center of a spherical fusion category, we have more explicit descriptions of its Gauss sums and anomalies. We first obtain the twists of the Galois orbit of $\1$.
\begin{lemma}\label{l:galois_twists}
Let $\DD$ be a spherical fusion category, and $\theta$ the ribbon isomorphism of $\ZZ(\DD)$. Then, for any automorphism $\s$ of $\Qb$,
$$
\theta_{\hs(\1)} = 1\,.
$$
\end{lemma}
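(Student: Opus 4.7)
The plan is to reduce the statement to a direct application of Equation~\eqref{eq:GammaAndTheta}, after choosing $\gamma$ advantageously. Since $\xi_1(\ZZ(\DD)) = 1$, one may take $\gamma = 1$, and \eqref{eq:GammaAndTheta} then yields $\theta_{\hs(\1)} = 1/\s^2(1) = 1$ immediately, for every $\s \in \Gal(\Qb/\BQ)$.

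To verify $\xi_1(\ZZ(\DD)) = 1$, I would apply Proposition~\ref{p:center_gauss_sum1} at $n = 1$. Since the first Frobenius--Schur indicator of a simple object $X \in \OO(\DD)$ satisfies $\nu_1(X) = [\1:X]_\DD$, we obtain $\nu_1(\DD) = \dim(\1) = 1$, and hence $\tau_1(\ZZ(\DD)) = \dim(\DD)$. Together with the standard equality $\dim(\ZZ(\DD)) = \dim(\DD)^2$, this gives $\xi_1(\ZZ(\DD)) = \dim(\DD)/\sqrt{\dim(\DD)^2} = 1$, so that $\gamma = 1$ is a legitimate cube root of $\xi_1(\ZZ(\DD))$.

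The only delicate point to address is the apparent dependence of Equation~\eqref{eq:GammaAndTheta} on which cube root of $\xi_1$ is chosen as $\gamma$. Any two such choices differ by a cube root of unity $\omega$, and $\s^2$ acts trivially on $\BQ(\zeta_3)$ (since $\Gal(\BQ(\zeta_3)/\BQ)$ has order $2$). Thus $\omega/\s^2(\omega) = 1$, so $\omega\gamma/\s^2(\omega\gamma) = \gamma/\s^2(\gamma)$, and the value $\theta_{\hs(\1)} = 1$ computed from $\gamma = 1$ is canonical. I expect this gauge-independence check to be the main (minor) obstacle; once settled, the rest of the argument is a one-line substitution.
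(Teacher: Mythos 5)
Your proposal is correct and follows essentially the same route as the paper: establish $\xi_1(\ZZ(\DD))=1$ and then apply Equation~\eqref{eq:GammaAndTheta} with the cube root $\gamma=1$. The only cosmetic difference is that you obtain $\tau_1(\ZZ(\DD))=\dim(\DD)$ from Proposition~\ref{p:center_gauss_sum1} together with $\nu_1(\DD)=1$, whereas the paper cites M\"uger's theorem directly; your extra check that $\gamma/\s^2(\gamma)$ is independent of the chosen cube root (since $\s^2$ fixes $\BQ(\zeta_3)$) is sound, though not strictly needed because \eqref{eq:GammaAndTheta} holds for any fixed choice of $\gamma$.
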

\begin{proof}
 By \cite[Theorem 1.2]{MugerSubfactor2},  $\tau_1(\ZZ(\DD))=\tau_{-1}(\ZZ(\DD))=\dim(\DD)$, and so $\xi_1(\ZZ(\DD))=1$.  By \cite[Proposition 4.7]{dong2015} or \eqref{eq:GammaAndTheta}, we find
	$\theta_{\hat{\sigma}(\1)} = 1$ since 1 is a $3^{rd}$ root of $\xi_1(\ZZ(\DD))$.
\end{proof}
Now, we can prove our second main theorem of this section, which  extends a result of M\"uger \cite[Theorem 1.2]{MugerSubfactor2} to higher Gauss sums.
\begin{theorem}\label{thm:GaussSumOfCenter}
	Let $ \DD $ be a spherical fusion category, $N=\ord(T_{\ZZ(\DD)} )$,   $n\in\mathbb{Z}$ with $\gcd(n,N)=1$,  $\tilde{n}$ a multiplicative inverse of $n$ modulo $N$, and $\sigma$ is any automorphism  of $\Qb$ such that $\sigma(\zeta_N)=\zeta_N^{\tilde{n}}$.  Then, for $a \in \BZ$,
\begin{equation}\label{eq:galois_tau}
	\quad\tau_{an}(\ZZ(\DD))= \sigma(\t_a(\ZZ(\DD)))\frac{\dim(\DD)^2}{\sigma(\dim(\DD))^2}\quad \text{and}
	\end{equation}
	\begin{equation}\label{eq:galois_nu}
\nu_{an}(\DD)= \sigma(\nu_a(\DD))\frac{\dim(\DD)}{\sigma(\dim(\DD))}.
	\end{equation}
Moreover,
\begin{equation}\label{eq:galois_charge}
\xi_{an}(\ZZ(\DD)) =\frac{\sigma(\nu_{a}(\DD))}{|\sigma(\nu_{a}(\DD))|}
\end{equation}
whenever $\nu_a(\DD) \ne 0$. In particular, $\xi_{an}(\ZZ(\DD))=1$ whenever $\sigma(\nu_{a}(\DD))$ is positive.  For $a =1$, we always have
\begin{equation} \label{eq:galois_charge_tau}
	\xi_n(\ZZ(\DD))=1 \quad\text{and}\quad\tau_n(\ZZ(\DD))=\tau_{-n}(\ZZ(\DD))=\frac{\dim(\DD)^2}{\sigma(\dim(\DD))}.
	\end{equation}
\end{theorem}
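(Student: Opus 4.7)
The plan is to deduce everything from Theorem \ref{thm:Galois} applied to the modular category $\CC = \ZZ(\DD)$, together with Proposition \ref{p:center_gauss_sum1}, Lemma \ref{l:galois_twists}, and the identity $\dim(\ZZ(\DD)) = \dim(\DD)^2$ (which follows from \cite[Theorem 1.2]{MugerSubfactor2}).

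First, I would obtain \eqref{eq:galois_tau} by direct substitution into Theorem \ref{thm:Galois}: the dimension factor $\dim(\CC)/\sigma(\dim(\CC))$ becomes $\dim(\DD)^2/\sigma(\dim(\DD))^2$ after using $\sigma(\dim(\DD)^2)=\sigma(\dim(\DD))^2$, while the twist factor $\theta_{\hat{\sigma}(\1)}^{an}$ reduces to $1$ by Lemma \ref{l:galois_twists}. Second, I would derive \eqref{eq:galois_nu} by substituting the identities $\tau_a(\ZZ(\DD)) = \dim(\DD)\nu_a(\DD)$ and $\tau_{an}(\ZZ(\DD)) = \dim(\DD)\nu_{an}(\DD)$ from Proposition \ref{p:center_gauss_sum1} into \eqref{eq:galois_tau} and cancelling one factor of $\dim(\DD)$.

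Next, for \eqref{eq:galois_charge}, I would rewrite
\begin{equation*}
\tau_{an}(\ZZ(\DD)) = \frac{\dim(\DD)^2}{\sigma(\dim(\DD))}\,\sigma(\nu_a(\DD))
\end{equation*}
using \eqref{eq:galois_nu} and Proposition \ref{p:center_gauss_sum1}. Since $\dim(\DD)$ is a totally positive cyclotomic integer (as recalled in Section \ref{prelims}), the scalar $\dim(\DD)^2/\sigma(\dim(\DD))$ is a positive real number; passing to the modulus and cancelling this factor yields \eqref{eq:galois_charge}. The observation that $\xi_{an}(\ZZ(\DD))=1$ whenever $\sigma(\nu_a(\DD))>0$ is then immediate.

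Finally, for the special case $a=1$, the key input is that $\tau_1(\ZZ(\DD)) = \dim(\DD)$ (again \cite[Theorem 1.2]{MugerSubfactor2}), which via Proposition \ref{p:center_gauss_sum1} forces $\nu_1(\DD) = 1$. Substituting into \eqref{eq:galois_nu} gives $\nu_n(\DD) = \dim(\DD)/\sigma(\dim(\DD))$, a positive real number, and \eqref{eq:galois_charge_tau} follows: $\xi_n(\ZZ(\DD))=1$ by the previous paragraph, while $\tau_n(\ZZ(\DD)) = \dim(\DD)\nu_n(\DD) = \dim(\DD)^2/\sigma(\dim(\DD))$, which is real so coincides with $\overline{\tau_n(\ZZ(\DD))} = \tau_{-n}(\ZZ(\DD))$. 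There is no serious obstacle here: the work is genuinely bookkeeping once Theorem \ref{thm:Galois} and Lemma \ref{l:galois_twists} are in hand, and the only subtlety to flag is the total positivity of $\dim(\DD)$, which is what allows the modulus normalization in \eqref{eq:galois_charge} to reduce cleanly.
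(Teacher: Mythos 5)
Your proposal is correct and follows essentially the same route as the paper: apply Theorem \ref{thm:Galois} to $\ZZ(\DD)$ with the twist factor killed by Lemma \ref{l:galois_twists}, convert to indicators via Proposition \ref{p:center_gauss_sum1} using total positivity of $\dim(\DD)$, and specialize $a=1$ with M\"uger's identity $\tau_{\pm1}(\ZZ(\DD))=\dim(\DD)$. The only cosmetic difference is that you deduce $\xi_n(\ZZ(\DD))=1$ through $\nu_n(\DD)=\dim(\DD)/\sigma(\dim(\DD))>0$, whereas the paper observes directly that $\tau_n(\ZZ(\DD))=\dim(\DD)^2/\sigma(\dim(\DD))$ is totally positive; these are the same computation.
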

\begin{proof}
	By Theorem \ref{thm:Galois} and Lemma \ref{l:galois_twists}, we have
	\begin{equation*}
	\tau_{an}(\ZZ(\DD))
	=
	\s(\tau_{a}(\ZZ(\DD)))
	\frac{\dim(\ZZ(\DD))}{\sigma(\dim(\ZZ(\DD))}
	=	
	\s(\tau_{a}(\ZZ(\DD))) \frac{\dim(\DD)^2}{\sigma(\dim(\DD))^2}.
	\end{equation*}
	Now, Proposition \ref{p:center_gauss_sum1} implies Equations \eqref{eq:galois_nu} and \eqref{eq:galois_charge} whenever $\nu_a(\DD) \ne 0$ since $\dim(\DD)$ is totally positive. Thus, if $\sigma(\nu_a(\DD)) >0$, $\xi_{an}(\ZZ(\DD))=1$.
By \cite[Theorem 1.2]{MugerSubfactor2}, $\t_1(\ZZ(\DD))=\t_{-1}(\ZZ(\DD))=\dim(\DD)$. Therefore, the second equation of \eqref{eq:galois_charge_tau} follows directly from \eqref{eq:galois_tau}. Now, we find $\t_n(\ZZ(\DD))$ is also totally positive, and so $\xi_n(\ZZ(\DD))=1$.
\end{proof}

It has been shown in Example \ref{wrong} that the higher central charges of Tannakian categories and their Drinfeld centers are all 1. Using the same notations as in the above theorem, the following examples of non-integral modular categories indicate that the case when $ n $ is not relatively prime to $ N $ is more subtle, even when the higher central charges are well-defined.

\begin{example}
Let $ \mathcal{H} $ be Haagerup-Izumi fusion category of rank 6 \cite{EvansGannon11}. The unitary fusion category $\HH$ is tensor generated by one simple object $\rho$, and $\OO(\HH)=G \sqcup G\rho$ where $G$ is a multiplicative group of order 3.  The fusion rules of $ \HH $ are given by the group multiplication of $G$ together with
$$
a \o \rho = a \rho,\quad a \otimes b\rho = (ab) \otimes \rho = b\rho \otimes a^{-1} \quad\text{and}\quad
a\rho \otimes b\rho = ab^{-1} \oplus \bigoplus_{c \in G} c\rho
$$
for all $ a, b \in G$. By unitarity, $ \dim(a\rho) = \frac{3+\sqrt{13}}{2} $ for any $ a \in G $. The Drinfeld center $\ZZ(\HH)$ has rank 12, and its modular data can be found in \cite{EvansGannon11}, which in principle enables us to compute all the central charges. One particularly useful information given by the modular data is that $ \ord(T_{\ZZ(\mathcal{H})}) = 39$. Therefore, by Theorem \ref{thm:GaussSumOfCenter}, if $n$ is not a multiple of 3 or 13, then $ \xi_n(\ZZ(\mathcal{H})) = 1$.

For any multiple $n$ of $3$ or $13$, we compute $\xi_n(\ZZ(\mathcal{H}))$ by using $\nu_n(\HH)$. Note that in $ \BZ_{39} $, every multiple of 3 can be written as $ 3k $ for some $ k $ such that $ \gcd(k, 39) = 1 $ (for example, $ 9 \equiv 48 = 3 \times 16$ mod 39), and the same is true for multiples of 13. Therefore, by Theorem \ref{thm:GaussSumOfCenter}, we only have to compute $ \nu_3(\HH) $ and $ \nu_{13}(\HH) $, and use Galois actions to get the indicators at other multiples of 3 and 13.

By \cite[Theorem 5.4]{TuckerNGHI}, $ \nu_3(a\rho) = 1 $, and $ \nu_{13}(a\rho) = \frac{1 + \sqrt{13}}{2}$ for any $ a \in G$. Since $G$ generates a fusion subcategory equivalent to $\Rep(G)$ as spherical fusion categories, $\nu_3(\Rep(G))=3$ and $\nu_{13}(\Rep(G))=1$.  Therefore,
$$
\nu_3(\HH)
=
\nu_3(\Rep(G)) + \sum_{a\in G} \nu_3(a\rho) \dim(a\rho)
=
3 + 3(\frac{3+\sqrt{13}}{2})
=
\frac{15+3\sqrt{13}}{2}
$$
and similarly
$$
\nu_{13}(\HH)
=
1 + 3(\frac{3+\sqrt{13}}{2})(\frac{1+\sqrt{13}}{2})
=
13+3\sqrt{13}.
$$
Consequently, by Proposition \ref{p:center_gauss_sum1}, $ \tau_3(\ZZ(\mathcal{H})) $ and $ \tau_{13}(\ZZ(\mathcal{H})) $ are positive real numbers, hence $ \xi_3(\ZZ(\mathcal{H})) = \xi_{13}(\ZZ(\mathcal{H})) = 1 $. Note that $ \nu_3 $ and $ \nu_{13} $ are totally positive. By Theorem \ref{thm:GaussSumOfCenter}, if $ n $ is a multiple of 3 or 13, then $ \xi_n(\ZZ(\HH))=1 $. In summary, $ \xi_m(\ZZ(\HH)) = 1$ for any integer $ m $.
\end{example}

However, there are Drinfeld centers of spherical fusion categories whose $ \xi_n $ is not equal to 1 when $ n $ is not relatively prime to $ N $. In fact, as we will see in the following example, $ \xi_n $ is not even a root of unity.

\begin{example} \label{ex:shi}
Let $ H $ be the 27-dimensional Hopf algebra $ H_{27}(1, \zeta_3) $ in \cite[Table 1]{ShimizuComputation} where $ \zeta_3 $ is a primitive $ 3^{rd} $ root of unity. Let $ \CC = \ZZ(\Rep(H)) $. It is shown in loc. cit. that $ \nu_3(H) = 3(5+4\zeta_3^2) $. Therefore, by Corollary \ref{c:center_gauss_sum2}, we have
$$
\tau_3(\CC) = \dim(H)\nu_3(H) = 81(5+4\zeta_3^2).
$$
Since the minimal polynomial of
$$
\alpha_3(\CC) = \frac{81(5+4\zeta_3^2)}{81(5+4\zeta_3)}
$$
is $ 7x^2+2x+7 $, $ \alpha_3 $ is not an algebraic integer. Hence, $ \xi_3 $ cannot be a root of unity. Note that $ \dim(\CC) = \dim(\Rep(H))^2 = 3^6$. Therefore, by the Cauchy Theorem \cite{KSZ, NgSchaunburgSpherical, paul}, $ \ord(T_{\CC}) $ is also a power of 3.
\end{example}

\end{subsection}

\end{section}



\begin{section}{De-equivariantization and local modules}\label{sec:deq}

Let $\mathcal{C}$ be a premodular category with the ribbon isomorphism $\theta$, and let $A\in\mathcal{C}$ be a ribbon algebra of $\CC$, i.e. a connected \'etale algebra with $\dim_\CC(A)\ne 0$ and $\theta_A=\id_{A}$ (cf. Definition \ref{d:ribbon_alg}). In the language of \cite{KiO}\cite[Remark 3.4]{DMNO}, $A$ is a \emph{rigid $\CC$-algebra}. In this section, we will derive expressions for the higher Gauss sums and central charges of the local $A$-module category $\CC_A^0$ in terms of those of $\CC$ in two different settings (Theorems \ref{th:deeq} and \ref{th:deek}).

Recall the notations and concepts related to $A$-modules in $\mathcal{C}$ from Section \ref{sec:local}.  We will use the same convention of graphical calculus as in \cite{KiO} to prove the following lemmas which are essential to our main results of this section. Note that Lemmas \ref{lemma:trace} and \ref{prop:TraceOfTheta} are in the spirit of \cite[Theorem 2.5]{ost15} and its proof.

\begin{lemma}\label{lemma:trace}
Let $\CC$ be a premodular category and $A$ a ribbon algebra of $\CC$. Then, for any $M \in\CC_{A}$ and $f\in \End_{\CC}(M)$,
\begin{equation}
    \InnerAction=\dim_\CC(A)\Tr_{\CC}(f).
\end{equation}
\end{lemma}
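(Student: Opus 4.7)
The plan is to manipulate the diagram $\InnerAction$ using graphical calculus in $\CC$ to separate the $A$-strand from the closed $M$-loop carrying $f$, thereby expressing the value as a product of the trace of an isolated $A$-loop, namely $\dim_\CC(A)$, and the trace of the closed $M$-loop with $f$ inserted, namely $\Tr_\CC(f)$. The key inputs are the defining properties of a ribbon algebra: commutativity $m\circ c_{A,A}=m$, separability (providing a bimodule section $\Delta:A\to A\o A$ of $m$), and triviality of the twist $\theta_A=\id_A$.

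First I would slide one of the two action points attaching $A$ to $M$ past the $f$-box along the $M$-strand. This is valid because $f\in\End_\CC(M)$ is a $\CC$-morphism, so naturality of the braiding and of the twist allow the $A$-strand to be moved freely along $M$. Any monodromy that might arise is governed by $\theta_{A\o M}=(\theta_A\o\theta_M)\,c_{M,A}c_{A,M}$, and since $\theta_A=\id_A$, the $A$-strand can be deformed around $f$ without accruing a nontrivial twist factor. Once the two action points are adjacent on the $M$-strand, the module axiom $\rho\circ(\id_A\o\rho)=\rho\circ(m\o\id_M)$ collapses them into a single application of $\rho$ preceded by the multiplication $m:A\o A\to A$.

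The $A$-strand now forms a closed loop containing an $m$-vertex and connected to $M$ via one remaining $\rho$. Using commutativity $m\circ c_{A,A}=m$ together with separability, the $m$-vertex can be resolved so that the $A$-loop decouples from $M$: the surviving $\rho$ factors through the unit $\eta:\1\to A$, and $\rho\circ(\eta\o\id_M)=\id_M$ by the unit axiom for modules. The now-isolated $A$-loop evaluates to $\dim_\CC(A)$, and what remains is the closed $M$-loop with $f$ inserted, whose value is $\Tr_\CC(f)$ by definition.

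The principal obstacle is ensuring that the sliding and resolution steps do not introduce extra scalars. This is precisely where the ribbon hypothesis is essential: $\theta_A=\id_A$ lets the $A$-strand be isotoped freely through the region containing $f$, and $\dim_\CC(A)\neq 0$ together with separability guarantees that the final contraction of the $A$-loop yields exactly $\dim_\CC(A)$ rather than zero. Without the ribbon property one would acquire nontrivial twist factors $\theta_A^{\pm 1}$ during the sliding, and without separability the $m$-vertex would not resolve cleanly; these are exactly the obstructions that the ribbon algebra axioms are designed to remove.
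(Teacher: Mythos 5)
The crux of your argument is its very first move: sliding one of the two action points attaching $A$ to $M$ past the box labelled $f$. This is not a legal step. Naturality of the braiding and of the twist only allows genuine braiding crossings (and $\theta$ itself) to slide past $f$; it says nothing about the module-structure vertex $\rho_M\colon A\o M\to M$. Moving an action point through the $f$-box is exactly the identity $f\circ\rho_M=\rho_M\circ(\id_A\o f)$, i.e.\ the assertion that $f$ is a morphism of $A$-modules --- which is precisely what is \emph{not} assumed ($f$ is only a morphism in $\CC$), and is the whole difficulty of the lemma. Your remark about the monodromy $\theta_{A\o M}=(\theta_A\o\theta_M)\,c_{M,A}c_{A,M}$ and $\theta_A=\id_A$ addresses a different and here irrelevant issue: the obstruction is not a stray twist scalar but the action vertex itself, which cannot be pushed through an arbitrary $\CC$-endomorphism by any naturality argument.

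The paper's proof is organized exactly to avoid this. Using the rigidity of $A$ in the sense of Kirillov--Ostrik (their Figure 10), the $A$-arc is first re-attached to the \emph{other} strand of the closed $M$-loop, so that on the strand carrying $f$ the $A$-strand only produces honest braiding crossings; these crossings do slide past $f$ by naturality of the braiding, and associativity of the $A$-action then brings the two attachment points together away from $f$. Finally the resulting $A$-bubble is contracted using Lemma 1.14 of Kirillov--Ostrik together with the unit axiom for modules, producing the scalar $\dim_\CC(A)$ times the closed $M$-loop with $f$, i.e.\ $\dim_\CC(A)\Tr_\CC(f)$. Your concluding contraction is in the same spirit as this last step (though it is vague about the copairing $i_A$ and why the bubble evaluates exactly to $\dim_\CC(A)$), but without a correct replacement for the first step --- some use of the rigid/Frobenius structure of $A$ to carry the $A$-arc around the traced loop rather than through $f$ --- the proof does not go through.
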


\begin{proof}
By the rigidity of $A$ \cite[Figure 10]{KiO}, we have

\begin{equation}
    \InnerAction
    =
    \PassOver\ .
\end{equation}
By the naturality of the braiding of $\CC$ and the associativity of the $A$-action on $M$, we have
\begin{equation}
    \PassOver=\OuterAction=\SmallLoop.
\end{equation}
Now \cite[Lemma 1.14]{KiO} and the axioms of an $A$-module imply
\begin{equation}
    \SmallLoop=\dim_\CC(A)\SmallDot=\dim_\CC(A)\TraceOfF=
    \dim_\CC(A)\Tr_{\CC}(f).
\end{equation}
\end{proof}

\par For any $M\in\OO(\CC_A)$, define
\begin{equation}\label{eq:P}
    P_M :=
    \frac{1}{\dim_\CC(A)}
    (
    \rho_M
    c_{M, A}
    c_{A, M}
    (\id_A \otimes  \rho_M)
    (i_A \otimes \id_M)
    )
    \in
    \End_{\CC}(M),
\end{equation}
where $i_A: \1 \to A \otimes A$ is as in \cite[Definition 1.11]{KiO}, $c$ is the brading of $\CC$ and $\rho_M: A \o M \to M$ is the $A$-module structure of $M$. Note that $P_M$ is the same as $P_\pi$ in \cite[Lemma 4.3]{KiO} (with $M=X_\pi$) where $\CC$ is assumed to be modular. However, the same proof of \cite[Lemma 4.3]{KiO} can be used for a premodular category $\CC$, and so we have
\begin{equation}\label{eq:PM}
    P_M=
    \left\{\begin{array}{lcr}
    \id_M & \mbox{if }  M \in \OO(\CC_A^0)\,, \\
    0 &  \mbox{otherwise\,. }\end{array}\right.
\end{equation}

\begin{lemma}\label{prop:TraceOfTheta}
Let $\CC$ be a premodular category with the ribbon isomorphism $\theta$ and $A$ a ribbon algebra of $\CC$. Then,
\begin{itemize}
\item[\textnormal{(a)}] for any $M \in \OO(\CC_A) \setminus \OO(\CC_A^0)$, $\Tr_{\CC}(\theta_M) = 0$, and
\item[\textnormal{(b)}] for any $M \in \OO(\CC_A^0)$ and $n \in \BZ$, $\theta^n_M = \lambda^n \id_M$, where $\lambda =\theta_X$ for any simple subobject $X$ of $M$ in $\CC$. In particular, $\Tr_{\CC}(\theta^n_M) = \lambda^n\dim_A(M)\dim_\CC(A)$.
\end{itemize}
\end{lemma}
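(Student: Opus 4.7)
The statement splits naturally into two independent assertions, and I will address part (b) first (since it is essentially an application of Schur's lemma) and then reduce part (a) to a graphical identity built from Lemma~\ref{lemma:trace} and the projector $P_M$ of~\eqref{eq:P}.

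For part (b), the first step is to verify that $\theta_M$ is an $A$-module morphism whenever $M\in\OO(\CC_A^0)$. Naturality of the ribbon isomorphism gives $\theta_M\circ\rho_M = \rho_M\circ\theta_{A\otimes M}$, and the ribbon axiom together with $\theta_A=\id_A$ (from $A$ being a ribbon algebra) yields $\theta_{A\otimes M}=c_{M,A}\,c_{A,M}\,(\id_A\otimes\theta_M)$. The locality condition then collapses this to $\theta_M\circ\rho_M=\rho_M\circ(\id_A\otimes\theta_M)$, so $\theta_M\in\End_{\CC_A}(M)$. Since $\CC_A^0$ is closed under subobjects in $\CC_A$, simplicity of $M$ in $\CC_A^0$ forces simplicity in $\CC_A$; Schur's lemma therefore produces a unique scalar $\lambda$ with $\theta_M=\lambda\,\id_M$. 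Restricting this identity to any simple subobject $X\subset M$ in $\CC$ identifies $\lambda=\theta_X$. The trace formula $\Tr_\CC(\theta_M^n)=\lambda^n\dim_\CC(M)=\lambda^n\dim_A(M)\dim_\CC(A)$ then follows immediately from \eqref{eq:Dimensions}.

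For part (a), assume $M\in\OO(\CC_A)\setminus\OO(\CC_A^0)$, so that $P_M=0$ by~\eqref{eq:PM}. Applying Lemma~\ref{lemma:trace} to $f=\theta_M$ gives $\dim_\CC(A)\,\Tr_\CC(\theta_M)=\InnerAction$ with $\theta_M$ inserted in place of $f$. I will then manipulate this picture using the identity $\theta_M\circ\rho_M=\rho_M\circ c_{M,A}\,c_{A,M}\,(\id_A\otimes\theta_M)$ to slide the twist past the inner $A$-action. Each such move converts the twist insertion into a double braiding between an internal $A$-strand and $M$. Combined with the surviving $A$-action, the coevaluation $i_A$, and the rigidity and associativity manipulations used in the proof of Lemma~\ref{lemma:trace}, the resulting diagram becomes precisely the composition $\rho_M\circ c_{M,A}\,c_{A,M}\circ(\id_A\otimes\rho_M)\circ(i_A\otimes\id_M)$ defining $P_M$ in~\eqref{eq:P}, composed with $\theta_M$. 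This yields the identity $\Tr_\CC(\theta_M)=\Tr_\CC(\theta_M\circ P_M)$, and since $P_M=0$ for non-local $M$, the conclusion follows.

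The main obstacle is the graphical calculation in part (a): carefully choreographing the slide of $\theta_M$ past the inner $A$-action using the ribbon axiom so that a double braiding appears in exactly the location required by~\eqref{eq:P}. This follows the same spirit as the proof of Lemma~\ref{lemma:trace}, with an additional ribbon-axiom move applied at each $A$-$M$ crossing encountered by the twist, and the key input is the hypothesis $\theta_A=\id_A$ which ensures the sliding moves do not introduce any spurious scalars.
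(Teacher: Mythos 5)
Your part (b) is correct and is essentially the paper's argument: the paper invokes the fact that the twist of $\CC_A^0$ is induced by $\theta$ and that $\CC_A^0$ is premodular, while you verify directly that locality together with $\theta_A=\id_A$ makes $\theta_M$ an $A$-module endomorphism and then apply Schur's lemma; either way the trace formula follows from \eqref{eq:Dimensions}. Your overall strategy for part (a) --- combine Lemma \ref{lemma:trace} with a twist-slide that produces the monodromy of \eqref{eq:P}, then use \eqref{eq:PM} --- is also the paper's strategy, but there is a concrete gap in the key graphical step.

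The gap is the claim that, after sliding $\theta_M$ through the diagram of Lemma \ref{lemma:trace}, the resulting morphism is ``precisely'' $\theta_M$ composed with the composition $\rho_M\, c_{M,A}c_{A,M}\,(\id_A\otimes\rho_M)(i_A\otimes\id_M)$ defining $P_M$. With the balancing convention $\theta_{A\otimes M}=c_{M,A}c_{A,M}(\theta_A\otimes\theta_M)$ and $\theta_A=\id_A$, one has $\rho_M(\id_A\otimes\theta_M)=\theta_M\,\rho_M\, c_{A,M}^{-1}c_{M,A}^{-1}$, so inserting $f=\theta_M$ and sliding past the outer action yields $\dim_\CC(A)\,\theta_M P_M^-$, where $P_M^-$ is the analogue of $P_M$ built from the \emph{inverse} monodromy $c_{A,M}^{-1}c_{M,A}^{-1}$ (sliding past the inner action instead puts a monodromy on the inner $A$-strand, which is again not the composition in \eqref{eq:P}). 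Hence what your manipulation actually proves is $\Tr_\CC(\theta_M)=\Tr_\CC(\theta_M P_M^-)$, and \eqref{eq:PM} does not apply verbatim to $P_M^-$, so you cannot yet conclude that the trace vanishes. This orientation issue is exactly why the paper runs the argument with $\theta^{-1}$: since $\rho_M(\id_A\otimes\theta_M^{-1})=\theta_M^{-1}\rho_M\, c_{M,A}c_{A,M}$, the twist-slide for $f=\theta_M^{-1}$ lands exactly on $\dim_\CC(A)\,\theta_M^{-1}P_M$, giving $\Tr_\CC(\theta_M^{-1}P_M)=\Tr_\CC(\theta_M^{-1})$, and then $\Tr_\CC(\theta_M)=\overline{\Tr_\CC(\theta_M^{-1})}$ (the twists are roots of unity and the dimensions are real) finishes part (a). Your proof can be repaired either by this conjugation reduction, or by proving the reversed-monodromy analogue of \eqref{eq:PM} (run the same argument in $\CC\rev$, where $A$ is still a ribbon algebra and the local modules are unchanged); but as written, the identification of the slid diagram with $P_M$ from \eqref{eq:P} is unjustified, and it is the step carrying all the content.
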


\begin{proof}
It suffices to show $\Tr_{\CC}(\theta^{-1}_M)=0$ for the statement (a) since
\begin{equation}
     \Tr_{\CC}(\theta^{-1}_M)=
     \sum_{X \in \OO(\CC)} \theta^{-1}_X \dim_{\CC}(X)[M:X]_{\CC}=
     \overline{\Tr_{\CC}(\theta_M)},
\end{equation}
where the second equality follows from $\theta_X^{-1}=\overline{\theta_X}$ and $\dim_{\CC}(X)\in\mathbb{R}$.  By \cite[Figure 16]{KiO} and Lemma \ref{lemma:trace}, we have
\begin{equation}
    \Tr_{\CC}(\theta_M^{-1}P_M)=
    \frac{1}{\dim_\CC(A)}\,\,\,\InnerActionTheta=
    \Tr_{\CC}(\theta_M^{-1}).
\end{equation}
Therefore, by Equation (\ref{eq:PM}), if $M \in \OO(\CC_A)\setminus\OO(\CC_A^0)$, then $0=\Tr_{\CC}(\theta_M^{-1}P_M)=\Tr_{\CC}(\theta_M^{-1})$.

\par For (b), let $\vartheta$ be the twist of $\CC_A^0$ and $M \in \mathcal{O}(\CC_A^0)$. Then $\vartheta_M = \lambda\id_M$ for some $\lambda\in \BC^\times$. By \cite[Theorem 1.7]{KiO},  the twist in $\CC_A^0$ is inherited from that of $\CC$. We have $\vartheta_M= \theta_M$. Therefore,  $\lambda= \theta_X$  for any $X \in \OO(\CC)$ such that $[M:X]_{\CC} \neq 0$. Thus, by \cite[Theorem 1.18]{KiO}, we have

\begin{equation}
\begin{aligned}
    \Tr_{\CC}(\theta^n_M)
    &=
    \sum_{X \in \OO(\CC)}
    \theta^n_X[M:X]_{\CC}\dim_{\CC}(X)\\
    &=
    \lambda^n
    \sum_{X \in \OO(\CC)}
    [M:X]_{\CC}\dim_{\CC}(X)\\
    &=
   \lambda^n \dim_{\CC}(M)= \lambda^n\dim_A(M)\dim_\CC(A). \qedhere
\end{aligned}
\end{equation}
\end{proof}

\begin{lemma}\label{lem:TotallyPositive}
    Let $\CC$ be a premodular category with ribbon isomorphism $\theta$. If $A\in\mathcal{C}$ is a ribbon algebra of $\CC$, then
    \begin{enumerate}
    \item[{\rm (a)}]   $\displaystyle \tau_n(\CC)=\sum_{M \in \OO(\CC_{A})}
    \dim_{A}(M) \Tr_{\CC}(\theta^n_M)$ for all $n \in \BZ$,
    \item[{\rm (b)}] $\tau_1(\CC_A^0)=\dfrac{\tau_1(\CC)}{\dim_\CC(A)}$,
    \item[{\rm (c)}] $\dim(\CC_A) =  \dfrac{\dim(\CC)}{\dim_\CC(A)}$ and  $\dim_\CC(A)$ is totally positive.
    \end{enumerate}
\end{lemma}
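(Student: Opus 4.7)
The plan is to prove part (a) first via a Frobenius reciprocity argument connecting the simples of $\CC$ to those of $\CC_A$; parts (b) and (c) will then drop out by specializing (a) to $n=1$ and $n=0$ respectively.

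For (a), the induction functor $F\colon \CC \to \CC_A$, $X \mapsto A \o X$, is left adjoint to the forgetful functor, so for $X \in \OO(\CC)$ and $M \in \OO(\CC_A)$ the multiplicity of $M$ in $A \o X$ as an $A$-module equals $\dim_\BC \Hom_\CC(X,M) = [M:X]_\CC$. Taking $\CC$-dimensions in the decomposition $A \o X \cong \bigoplus_M [M:X]_\CC\, M$ in $\CC_A$, and applying $\dim_\CC(M) = \dim_A(M)\dim_\CC(A)$ from \eqref{eq:Dimensions}, yields
$$\dim_\CC(X) = \sum_{M \in \OO(\CC_A)} [M:X]_\CC\, \dim_A(M).$$
I will then substitute this into one of the two factors of $\dim_\CC(X)^2$ in the definition of $\tau_n(\CC)$, interchange the summations, and recognize the inner sum as the $\CC$-trace
$$\Tr_\CC(\theta_M^n) = \sum_{X \in \OO(\CC)} [M:X]_\CC\, \theta_X^n \dim_\CC(X),$$
which holds because $\theta$ acts as the scalar $\theta_X$ on each $X$-isotypic component of $M$ viewed in $\CC$. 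This yields (a).

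Setting $n=1$ in (a) and invoking Lemma \ref{prop:TraceOfTheta}, the terms with $M \in \OO(\CC_A)\setminus\OO(\CC_A^0)$ vanish by part (a) of that lemma, while each $M \in \OO(\CC_A^0)$ contributes $\theta_M\dim_A(M)^2\dim_\CC(A)$ by part (b), where $\theta_M$ is interpreted as the scalar ribbon twist in $\CC_A^0$ inherited from $\CC$. Pulling $\dim_\CC(A)$ out of the sum gives (b). Setting $n=0$ in (a) instead gives $\dim(\CC) = \sum_M \dim_A(M)^2\dim_\CC(A) = \dim_\CC(A)\dim(\CC_A)$, which is the first half of (c). Finally, both $\dim(\CC)$ and $\dim(\CC_A)$ are totally positive cyclotomic integers, as they are the global dimensions of the spherical fusion categories $\CC$ and $\CC_A$ (see the preliminaries), so their ratio $\dim_\CC(A)$ is totally positive, completing (c). I do not foresee a serious obstacle; the only point requiring care is confirming the Frobenius reciprocity identity for the induction functor, which is standard from \cite{KiO}.
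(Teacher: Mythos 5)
Your proposal is correct and follows essentially the same route as the paper: part (a) is the same Frobenius reciprocity computation combined with $\dim_\CC(M)=\dim_A(M)\dim_\CC(A)$, part (b) specializes $n=1$ and invokes Lemma \ref{prop:TraceOfTheta} exactly as the paper does, and part (c) is the $n=0$ case together with total positivity of the global dimensions of $\CC$ and $\CC_A$. No gaps.
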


\begin{proof}
 By \cite[Theorem 1.18]{KiO},  $\dim_A(M) = \frac{\dim_\CC(M)}{\dim_\CC(A)}$ for any object $M \in \CC_A$. Since the forgetful functor $\CC_A \to \CC$ is a left adjoint of $F: \CC \to \CC_A$; $X \to A \o X$, we have
\begin{equation*}\label{eq:TauAndTrace}
\begin{aligned}
    \tau_n(\CC)&=
    \sum_{X\in\mathcal{O}(\mathcal{C})}
    \theta^n_X \dim_{\CC}(X)^2=
    \sum_{X\in\mathcal{O}(\mathcal{C})}
    \theta^n_X \dim_{\CC}(X) \dim_{A}(A \otimes X)\\
    &=
    \sum_{X\in\mathcal{O}(\mathcal{C})}
    \theta^n_X \dim_{\CC}(X)
    \sum_{M \in \OO(\CC_{A})}
    [A \otimes X: M]_{\CC_{A}}\dim_{A}(M)\\
    &=
    \sum_{M \in \OO(\CC_{A})}
    \dim_{A}(M)
    \sum_{X \in \OO(\CC)}\theta^n_X
    [M:X]_{\CC}\dim_{\CC}(X)\\
    &=
    \sum_{M \in \OO(\CC_{A})}
    \dim_{A}(M) \Tr_{\CC}(\theta^n_M),
\end{aligned}
\end{equation*}
and this proves part (a).

For the statement (b), we consider $n=1$. Then, Lemma \ref{prop:TraceOfTheta} implies
\begin{equation*}\label{eq:TauOneCAndTauOneCA0}
\begin{aligned}
    \tau_1(\CC)
    &=
    \sum_{M \in \OO(\CC_{A}^0)}
    \dim_{A}(M) \Tr_{\CC}(\theta_M)+
    \sum_{M \in \OO(\CC_{A})\setminus \OO(\CC_{A}^{0})}
    \dim_{A}(M)\Tr_{\CC}(\theta_M)\\
    &=
    \dim_\CC(A)\sum_{M \in \OO(\CC_{A}^0)}
    \theta_M \dim_{A}(M)^2+0\\
    &=\dim_\CC(A)\tau_1(\CC_A^0).
\end{aligned}
\end{equation*}

Now, we consider $n=0$ for equation of part (a).  We have
$$
\begin{aligned}
    \dim(\CC) = \tau_0(\CC) & = \sum_{M \in \OO(\CC_A)} \dim_A(M) \dim_\CC(M)\\
              & =\dim_\CC(A)\sum_{M \in \OO(\CC_A)} \dim_A(M)^2 =\dim_\CC(A)\dim(\CC_A)\,.
\end{aligned}
$$
Since the global dimensions $\dim(\CC)$ and $\dim(\CC_A)$ are totally positive, so is $\dim_\CC(A)$, and the proof is completed.
\end{proof}
The following corollary, which is a generalization of \cite[Theorem 4.5]{KiO} to premodular categories, is now an immediate consequence of the preceding lemma.
\begin{corollary}\label{lem:known}
Let $\CC$ be a premodular category with $\t_1(\CC)\ne 0$. If $A\in\mathcal{C}$ is a ribbon algebra of $\CC$, then
$\xi_1(\CC_A^0)=\xi_1(\CC)$. \hfill{$\square$}
\end{corollary}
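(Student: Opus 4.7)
The plan is to deduce the corollary directly from Lemma~\ref{lem:TotallyPositive}; essentially all the work has already been done there, and what remains is a short arithmetic verification.

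First I would invoke Lemma~\ref{lem:TotallyPositive}(b) to write $\tau_1(\CC_A^0) = \tau_1(\CC)/\dim_\CC(A)$. Since $\tau_1(\CC) \ne 0$ by hypothesis and $\dim_\CC(A) \ne 0$ by the definition of a ribbon algebra, this shows $\tau_1(\CC_A^0) \ne 0$, so $\xi_1(\CC_A^0)$ is indeed well-defined. Next, I would appeal to Lemma~\ref{lem:TotallyPositive}(c), which asserts that $\dim_\CC(A)$ is totally positive; in particular $\dim_\CC(A)$ is a positive real number, so
\begin{equation*}
|\tau_1(\CC_A^0)| \;=\; \frac{|\tau_1(\CC)|}{\dim_\CC(A)}.
\end{equation*}

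Unwinding the definition of the first central charge, the factors of $\dim_\CC(A)$ cancel:
\begin{equation*}
\xi_1(\CC_A^0) \;=\; \frac{\tau_1(\CC_A^0)}{|\tau_1(\CC_A^0)|} \;=\; \frac{\tau_1(\CC)/\dim_\CC(A)}{|\tau_1(\CC)|/\dim_\CC(A)} \;=\; \frac{\tau_1(\CC)}{|\tau_1(\CC)|} \;=\; \xi_1(\CC),
\end{equation*}
which is the claim.

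There is no genuine obstacle left: the only subtlety is that one must know $\dim_\CC(A)$ is positive (not merely nonzero) for the moduli to cancel cleanly, but this is precisely Lemma~\ref{lem:TotallyPositive}(c). In effect, the corollary is a bookkeeping consequence of the formula $\tau_1(\CC_A^0) = \tau_1(\CC)/\dim_\CC(A)$ combined with the total positivity of $\dim_\CC(A)$.
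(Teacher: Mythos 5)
Your proposal is correct and is exactly the argument the paper intends: the corollary is stated as an immediate consequence of Lemma~\ref{lem:TotallyPositive}, using part~(b) for $\tau_1(\CC_A^0)=\tau_1(\CC)/\dim_\CC(A)$ and part~(c) for the positivity of $\dim_\CC(A)$ so the moduli cancel. Nothing further is needed.
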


Let $\CC$ be a premodular category. Recall that a fusion full subcategory $\AA$ of $\CC$ is a \emph{Tannakian subcategory} if $\AA$ is equivalent $\Rep(G)$ for some finite group $G$, as premodular categories. Let $ A = \operatorname{Fun}(G)$ be the regular algebra (the algebra of complex valued functions on $G$). In this case, $ A $ is a ribbon algebra of $\CC$ with dimension $ \dim_\CC(A) = |G| $, and $ \CC_A $ is the de-equivariantization $\CC_G$ of $ \CC $ (cf. Definition \ref{d:ribbon_alg} and \cite{DGNO}). The corresponding category of local modules is denoted by $ \CC_G^0 (=\CC_A^0) $.

Let $G$ be a finite subgroup of the group of the isomorphism classes of invertible objects of $\CC$. We simply call $G$ a \emph{Tannakian subgroup} of $\CC$ if the full subcategory $\AA$ of $\CC$ generated by $G$ is a Tannakian subcategory of $\CC$. In this case, $ G $ is abelian, $ \OO(\AA) = G $ and $\AA$ is equivalent to the premodular category $\Rep(\hat{G})$, where $\hat{G}\, (\cong G)$ is the character group of $G$. Therefore, the regular algebra $A$ of $\AA$ is given by
\begin{equation}\label{eq:RegAlg}
	A = \bigoplus\limits_{g \in G} g.
\end{equation}
Thus, $A \o g = g \o A = A$ in $\CC$ for $g \in G$. Hence, for any $ M \in \OO(\CC_{A}) $ and for any $ X \in \OO(\CC) $, we have
\begin{equation}\label{eq:Adj}
	[M:g \o X]_{\CC} = [A \o g\o  X: M]_{\CC_{A}}
	= [A \o  X: M]_{\CC_{A}}
	= [M: X]_{\CC}\,.
\end{equation}	

Now fix an $ M \in \OO(\CC_A) $ and let $ X \in \OO(\CC) $ be a simple subobject of $ M $ in $ \CC $. In other words, $ [M:X]_\CC \neq 0 $. Since $A \o X = \bigoplus_{g \in G} g \o X$ and $M$ is a direct summand of $A \o X$ in $\CC$, every simple subobject $Y$ of $M$ in $\CC$ is of the form $g \o X$ for some $g \in G$. In particular, for any $ Y \in \OO(\CC) $ such that $ [M:Y]_\CC \neq 0 $, we have $\dim_\CC(Y)=\dim_\CC(X)$ and $[M:Y]_\CC = [M:X]_\CC$ by \eqref{eq:Adj}. Hence, for any $n \in \BZ$, we have
\begin{equation}\label{eq:TraceThetaN}
	\begin{aligned}
	\Tr_{\CC}(\theta_M^n)
	&=
	\sum_{Y \in \OO(\CC)}
	\theta_Y^n[M:Y]_{\CC}\dim_{\CC}(Y)\\
	&=
	\sum_{
		\substack{
			Y \in \OO(\CC)\\
			[M:Y]_{\CC} \neq 0
		}
	}
	\theta_{Y}^{n} [M: Y]_{\CC} \dim_{\CC}(Y)\\
	&=
	[M:X]_{\CC}\dim_{\CC}(X)
	\sum_{
		\substack{
			Y \in \OO(\CC)\\
			[M:Y]_{\CC} \neq 0
		}
	}
	\theta_{Y}^{n}.
	\end{aligned}
\end{equation}

\begin{lemma}\label{lemma:GroupCase}
	Let $ \CC $ be a premodular category with ribbon isomorphism $\theta$, and $G$ a Tannakian subgroup of $\CC$. Then for any $M \in \OO(\CC_G)\setminus\OO(\CC_{G}^{0})$ and $ n\in \BZ $ relatively prime to $\ord(T_\CC)$,
	$$
 \Tr_{\CC}(\theta^n_M) = 0\,.
 $$
\end{lemma}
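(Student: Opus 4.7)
The plan is to identify the sum on the right of Equation \eqref{eq:TraceThetaN} with a character sum on a quotient of $G$, and then invoke character orthogonality. Fix $M \in \OO(\CC_G) \setminus \OO(\CC_G^0)$ and a simple subobject $X$ of $M$ in $\CC$. By Equation \eqref{eq:TraceThetaN},
\begin{equation*}
\Tr_\CC(\theta_M^n) = [M:X]_\CC \dim_\CC(X) \sum_{Y \in \OO(\CC),\, [M:Y]_\CC \ne 0} \theta_Y^n,
\end{equation*}
so it suffices to show that this last sum vanishes.

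First I would parameterize the simple subobjects. With $A = \bigoplus_{g \in G} g$ the regular algebra, the surjection $A \otimes X \twoheadrightarrow M$ of $A$-modules coming from the inclusion $X \hookrightarrow M$ in $\CC$ shows that every simple subobject of $M$ in $\CC$ is isomorphic to $g \otimes X$ for some $g \in G$. Setting $H := \{g \in G : g \otimes X \cong X\}$, the distinct isomorphism classes of such subobjects are in bijection with $G/H$.

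Next I would construct the relevant character. For each $g \in G$, the double braiding $c_{X,g} \circ c_{g,X}$ acts on the simple object $g \otimes X$ as a scalar $\beta(g) \in \BC^\times$, and the hexagon axioms combined with the invertibility of $g$ imply that $\beta \colon G \to \BC^\times$ is a group homomorphism. The ribbon formula $\theta_{g \otimes X} = \theta_g \theta_X \, (c_{X,g} c_{g,X})$ together with $\theta_g = 1$ (since $g$ is Tannakian) gives $\theta_{g \otimes X} = \beta(g)\, \theta_X$; in particular $\beta(g) = \theta_{g \otimes X}/\theta_X$ has order dividing $\ord(T_\CC)$. For $g \in H$, $g \otimes X \cong X$ forces $\theta_{g \otimes X} = \theta_X$, so $\beta|_H \equiv 1$ and $\beta$ descends to a character of $G/H$. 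Using Equation \eqref{eq:P} together with the decomposition $A = \bigoplus_g g$ (and the formula for $i_A$ as a sum of coevaluations $\1 \to g \otimes g^{-1}$), a direct diagrammatic calculation on each simple subobject $g\otimes X$ of $M$ shows $P_M$ acts there as the scalar $\tfrac{1}{|G|}\sum_{h \in G} \beta(h)$. Because $M \notin \OO(\CC_G^0)$, Equation \eqref{eq:PM} forces $P_M = 0$, and hence by the standard orthogonality relation $\beta$ must be a nontrivial character of $G/H$.

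Finally, since $\ord(\beta) \mid \ord(T_\CC)$ and $\gcd(n, \ord(T_\CC)) = 1$, the character $\beta^n$ has the same order as $\beta$ and is therefore also nontrivial on the finite abelian group $G/H$. Character orthogonality on $G/H$ then gives
\begin{equation*}
\sum_{Y \in \OO(\CC),\, [M:Y]_\CC \ne 0} \theta_Y^n \;=\; \theta_X^n \sum_{gH \in G/H} \beta(g)^n \;=\; 0,
\end{equation*}
which completes the argument. The main obstacle is the identification of $P_M$ with the averaged character sum $\tfrac{1}{|G|}\sum_h \beta(h)$; this is a graphical computation using the explicit form of $i_A$, the hexagon axioms, and the fact that the double braiding of $g \in G$ against any simple subobject $g'\otimes X$ of $M$ equals $\beta(g)\,\id$ (which itself uses $\beta_{g' \otimes X}(g) = \beta_{g'}(g)\beta_X(g) = \beta_X(g)$, since Tannakian invertibles braid trivially among themselves).
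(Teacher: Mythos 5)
Your proof is correct, but it takes a genuinely different route from the paper's. The paper simply notes that Lemma \ref{prop:TraceOfTheta}(a) gives $\Tr_\CC(\theta_M)=0$, hence by \eqref{eq:TraceThetaN} the sum $\sum_{[M:Y]_\CC\ne 0}\theta_Y$ vanishes, and then applies a Galois automorphism $\sigma_n$ with $\sigma_n(\theta_Y)=\theta_Y^n$ (available exactly because $\gcd(n,\ord(T_\CC))=1$ and the twists are roots of unity of order dividing $\ord(T_\CC)$) to conclude $\sum\theta_Y^n=0$; no structure of the Tannakian subgroup beyond \eqref{eq:TraceThetaN} is used. You instead make the group structure explicit: the twists on the orbit are $\theta_{g\otimes X}=\beta(g)\theta_X$ for a character $\beta$ of $G/H$ whose order divides $\ord(T_\CC)$, nontriviality of $\beta$ is extracted from $P_M=0$ via \eqref{eq:P}--\eqref{eq:PM}, and coprimality plus orthogonality of the nontrivial character $\beta^n$ finishes the argument. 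Your route is more structural --- it identifies locality of $M$ with triviality of the monodromy character, essentially re-proving a de-equivariantization fact, and it replaces the Galois automorphism by the elementary observation that $\beta\mapsto\beta^n$ preserves nontriviality --- whereas the paper's route is shorter and leans only on already-established lemmas. The one soft spot in your write-up is the pivotal diagrammatic claim that $P_M$ acts on each isotypic piece as $\tfrac{1}{|G|}\sum_h\beta(h)$: this is true (the monodromy of $h$ with any subobject $g'\otimes X$ is $\beta(h)$ since Tannakian invertibles have trivial mutual monodromy, and the remaining diagram collapses by the Frobenius structure of the regular algebra), but you only sketch it, and it is the most technical step of your argument. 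Note that you could bypass it entirely: Lemma \ref{prop:TraceOfTheta}(a) already gives $\Tr_\CC(\theta_M)=0$, which via \eqref{eq:TraceThetaN} says precisely that $\sum_{gH\in G/H}\beta(g)=0$, i.e.\ that $\beta$ is nontrivial, after which your coprimality-and-orthogonality ending goes through unchanged.
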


\begin{proof}
	By Lemma \ref{prop:TraceOfTheta} (a), for any $ M \in \OO(\CC_{G}) \setminus \OO(\CC_{G}^0) $, we have $ \Tr_{\CC}(\theta_M) = 0 $. Therefore, by Equation (\ref{eq:TraceThetaN}), we have
	$$
	\sum_{
	\substack{
		Y \in \OO(\CC)\\
		[M:Y]_{\CC} \neq 0
	}}
	\theta_{Y}
	=
	0,
	$$
	as the dimension of a simple object in a fusion category is not 0 and $ [M:Y]_{\CC} \neq 0$ by assumption.
	For any integer $n$ relatively prime to $\ord(T_\CC)$, there exists  $\sigma_n \in\Gal(\Qb/\Q)$ such that $\sigma_n(\theta_X) = \theta_{X}^n$ for all $ X \in \OO(\CC)$. We then have
	$$
	\sum_{
		\substack{
			X \in \OO(\CC)\\
			[M:X]_{\CC} \neq 0
	}}
	\theta_{X}^n
	=
	\sigma_n
	\left(
	\sum_{
	\substack{
		X \in \OO(\CC)\\
		[M:X]_{\CC} \neq 0
	}}
	\theta_{X}
	\right)
	=
	0,
	$$	
	which, together with Equation (\ref{eq:TraceThetaN}), proves the lemma.
\end{proof}

\begin{theorem}\label{th:deeq}
	Let $ \CC $ be a premodular category, and $G$ a Tannakian subgroup of $\CC$. Then for all $ n \in \BZ $ relatively prime to  $\ord(T_\CC)$,
	$$\tau_n(\mathcal{C})=|G|\tau_n(\CC_G^0).$$
	In addition, if $\tau_n(\mathcal{C})\neq0$, then
	$\xi_n(\CC_G^0)=\xi_n(\CC)$.
\end{theorem}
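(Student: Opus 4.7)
The plan is to take $A=\bigoplus_{g\in G}g$, the regular algebra of the Tannakian subcategory $\AA$ generated by $G$, which has $\dim_\CC(A)=|G|$, and to evaluate the sum formula of Lemma \ref{lem:TotallyPositive}(a) separately on local and non-local modules. Explicitly, I would start from
\begin{equation*}
\tau_n(\CC) = \sum_{M\in\OO(\CC_A)}\dim_A(M)\,\Tr_\CC(\theta^n_M)
\end{equation*}
and split the right-hand side as a sum over $M\in\OO(\CC_A^0)$ plus a sum over $M\in\OO(\CC_A)\setminus\OO(\CC_A^0)$.

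The second sum is exactly where the coprimality hypothesis on $n$ enters: Lemma \ref{lemma:GroupCase} shows that $\Tr_\CC(\theta^n_M)=0$ for every $M\in\OO(\CC_A)\setminus\OO(\CC_A^0)$ whenever $\gcd(n,\ord(T_\CC))=1$, using the Galois automorphism $\sigma_n$ that sends $\theta_X\mapsto\theta_X^n$. So the sum collapses to the local piece. On $\OO(\CC_A^0)$, Lemma \ref{prop:TraceOfTheta}(b) gives $\Tr_\CC(\theta^n_M)=\lambda_M^n\,\dim_A(M)\dim_\CC(A)$ where $\lambda_M$ is the scalar by which the inherited twist of $\CC_A^0$ acts on $M$. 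Assembling these gives
\begin{equation*}
\tau_n(\CC) = \dim_\CC(A)\sum_{M\in\OO(\CC_A^0)}\lambda_M^n\dim_A(M)^2 = |G|\,\tau_n(\CC_A^0) = |G|\,\tau_n(\CC_G^0),
\end{equation*}
which is the first claim. The identification $\CC_A=\CC_G$ (hence $\CC_A^0=\CC_G^0$) is the one already used in Section \ref{sec:local} for the regular algebra of a Tannakian subcategory.

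For the second claim, once $\tau_n(\CC)\ne 0$, the equality $\tau_n(\CC)=|G|\tau_n(\CC_G^0)$ with $|G|$ a positive real number forces $|\tau_n(\CC)|=|G|\,|\tau_n(\CC_G^0)|$, so dividing numerator and denominator gives $\xi_n(\CC_G^0)=\tau_n(\CC_G^0)/|\tau_n(\CC_G^0)|=\tau_n(\CC)/|\tau_n(\CC)|=\xi_n(\CC)$. I do not expect any genuine obstacle here; the main conceptual step is the vanishing in Lemma \ref{lemma:GroupCase}, which is already available. The only point to double-check is that $|G|=\dim_\CC(A)$ is indeed a positive real number (it is, both by the description of $A$ and by Lemma \ref{lem:TotallyPositive}(c)), so that it may be pulled out of the modulus without introducing a phase.
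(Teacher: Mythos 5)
Your proposal is correct and follows essentially the same route as the paper: apply Lemma \ref{lem:TotallyPositive}(a) to the regular algebra $A=\bigoplus_{g\in G}g$, kill the non-local summands via Lemma \ref{lemma:GroupCase} using the coprimality of $n$ with $\ord(T_\CC)$, evaluate the local summands via Lemma \ref{prop:TraceOfTheta}(b), and then deduce $\xi_n(\CC_G^0)=\xi_n(\CC)$ from positivity of $|G|=\dim_\CC(A)$. No gaps; this matches the paper's argument step for step.
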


\begin{proof} Let $\theta$ be the ribbon isomorphism of $\CC$, and $A$ the regular algebra of the Tannakian subcategory $\AA$ equivalent to $\Rep(G)$. Lemmas \ref{lem:TotallyPositive} and \ref{lemma:GroupCase} imply
	\begin{equation*}
	\begin{aligned}
	\tau_n(\CC)
	&= \sum_{M \in \OO(\CC_{A})}
	\dim_{A}(M)\Tr_{\CC}(\theta_M^n)\\
	&= \sum_{M \in \OO(\CC_{A}^0)}
	\dim_{A}(M) \Tr_{\CC}(\theta^n_M)+
	\sum_{M \in \OO(\CC_{A})\setminus \OO(\CC_{A}^{0})}
	\dim_{A}(M)\Tr_{\CC}(\theta^n_M)\\
	&=
	\dim_\CC(A)\sum_{M \in \OO(\CC_{A}^0)}
	\theta_M^n \dim_{A}(M)^2+0\\
	&=\dim_\CC(A)\tau_n(\CC_A^0) = |G|\tau_n(\CC_G^0).
	\end{aligned}
	\end{equation*}
	The last assertion follows directly from the definition of higher central charges as $|G|$ is a positive integer.
\end{proof}

With the notations as above, recall that in this case, the de-equivariantization $(\AA')_G$ of the centralizer of $ \AA $ is the same as the category of local $A$-modules $ \CC_A^0 $.

\begin{example}
	Consider the category $\mathcal{C}:=\mathcal{C}(\mathfrak{so}_5,4)$ of rank 15 containing the Tannakian  subcategory $\mathcal{A}:=\mathrm{Rep}(\BZ_2)$ and has $\mathrm{ord}(T_\mathcal{C})=28$. The de-equivariantization $(\mathcal{A}')_{\BZ_2} = \CC^0_{\BZ_2}$ factors as $\mathcal{D}^{\boxtimes2}$, where $\mathcal{D}:=(\mathcal{C}(\mathfrak{sl}_2,5)'_\text{pt})^{\text{rev}}$ is a premodular category of rank 3. Let $d:=2\cos(\pi/7)$ and $\zeta=e^{\pi i/7}$. The dimensions of the nontrivial simple objects of $ \DD $ are $ d $ and $ d^2-1 $, and their twists are $ \zeta^{-2} $ and $ \zeta^4 $ respectively (cf. \cite{2018arXiv181009057S}).  There are 12 integers $ m $ such that $1\leq m<28$ and $\gcd(m,28)=1$. Therefore, $\tau_m(\mathcal{C})=2\tau^2_m(\DD)$ by Theorem \ref{th:deeq}, and we have
	\begin{equation}
	\xi_m(\mathcal{C})
	=
	\left\{
	\begin{array}{lcl}
	\zeta^4 & \mbox{if} & m=5,13,19,27; \\
	\zeta^6 & \mbox{if} & m=3,17; \\
	\zeta^8 & \mbox{if} & m=11,25; \\
	\zeta^{10} & \mbox{if} & m=1,9,15,23.
	\end{array}
	\right.
	\end{equation}
\end{example}

One would expect Theorem \ref{th:deeq} could be generalized to any ribbon algebra $A$ of a premodular category $ \CC $.  However, we are only able to do this for modular categories $\CC$ as the Galois group actions on their modular data can be applied.

\begin{theorem}\label{th:deek}
Let $\CC$ be a modular  category, $A\in\mathcal{C}$ a ribbon algebra, and $N=\ord(T_\CC)$.  Suppose $\1_\CC$ and $\1_A$ are respectively the monoidal units of $\CC$ and $\CC_A^0$. If $n\in\mathbb{Z}$ is relatively prime to $N$, then
\begin{equation}
	\theta_{\hat{\sigma}(\1_{\CC})}= \theta_{\hat{\sigma}(\1_A)}, \quad \xi_n(\CC_A^0)=\xi_n(\CC)\quad \text{and}\quad \tau_n(\CC_A^0)=\frac{\sigma(\dim_\CC(A))}{\dim_\CC(A)^2}\tau_n(\CC) \ne 0
\end{equation}
where $\sigma$ is any automorphism of $\Qb$ such that $\sigma(\zeta_N)=\zeta_N^{\tilde{n}}$ and $\tilde{n}$ is the multiplicative inverse of $n$ modulo $N$ as in Section \ref{sec:arith}.
\end{theorem}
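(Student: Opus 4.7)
The strategy is to apply Theorem \ref{thm:Galois} (with $a=1$) to both $\CC$ and $\CC_A^0$ and then reconcile the two resulting expressions using the global-dimension and first-Gauss-sum identities from Lemma \ref{lem:TotallyPositive} together with the central-charge coincidence in Corollary \ref{lem:known}. Before doing so, I would verify that the Galois setup transfers: since $\CC$ is modular, $\CC_A^0$ is modular by \cite[Theorem 4.5]{KiO}; since its ribbon structure is inherited from $\CC$, the conductor $N' = \ord(T_{\CC_A^0})$ divides $N = \ord(T_\CC)$, so $\gcd(n, N')=1$, and the given $\sigma$ restricts to the analogous Galois element for $\CC_A^0$.

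The first and crucial step is to prove $\theta_{\hat\sigma(\1_\CC)}=\theta_{\hat\sigma(\1_A)}$. Corollary \ref{lem:known} asserts $\xi_1(\CC_A^0)=\xi_1(\CC)$, so I may fix a single scalar $\gamma\in\BC$ with
\[
\gamma^3 \;=\; \xi_1(\CC) \;=\; \xi_1(\CC_A^0).
\]
Equation \eqref{eq:GammaAndTheta} applied separately to $\CC$ and to $\CC_A^0$ with this common $\gamma$ yields $\theta_{\hat\sigma(\1_\CC)} = \gamma/\sigma^2(\gamma) = \theta_{\hat\sigma(\1_A)}$, as required. Although $\hat\sigma$ acts on two different sets, this produces an equality of scalars, which is what the later formulas need.

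Now I would apply Theorem \ref{thm:Galois} with $a=1$ to each category, getting
\[
\tau_n(\CC) \;=\; \sigma(\tau_1(\CC))\,\frac{\dim(\CC)}{\sigma(\dim(\CC))}\,\theta_{\hat\sigma(\1_\CC)}^{\,n},
\]
and analogously for $\CC_A^0$. From Lemma \ref{lem:TotallyPositive}(b),(c), $\tau_1(\CC_A^0)=\tau_1(\CC)/\dim_\CC(A)$ and $\dim(\CC_A^0)=\dim(\CC)/\dim_\CC(A)^2$, so applying $\sigma$ and substituting into the formula for $\tau_n(\CC_A^0)$ produces, after the twist factors cancel via the previous paragraph,
\[
\tau_n(\CC_A^0) \;=\; \frac{\sigma(\dim_\CC(A))}{\dim_\CC(A)^2}\,\tau_n(\CC),
\]
which is the claimed relation. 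The nonvanishing $\tau_n(\CC)\ne 0$ is the last assertion of Theorem \ref{thm:Galois}. For the central charges, Lemma \ref{lem:TotallyPositive}(c) says both $\dim_\CC(A)$ and the Galois conjugate $\sigma(\dim_\CC(A))$ are totally positive, hence the ratio $\sigma(\dim_\CC(A))/\dim_\CC(A)^2$ is a positive real number; therefore $\tau_n(\CC_A^0)$ and $\tau_n(\CC)$ have the same argument, giving $\xi_n(\CC_A^0)=\xi_n(\CC)$.

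The main obstacle is the twist identity $\theta_{\hat\sigma(\1_\CC)}=\theta_{\hat\sigma(\1_A)}$, because the permutations $\hat\sigma$ live on different indexing sets and it is not obvious a priori that the associated twist eigenvalues coincide. Once the coincidence of first central charges from Corollary \ref{lem:known} is invoked, however, Equation \eqref{eq:GammaAndTheta} bridges the two categories through a single choice of $\gamma$, and the rest of the proof is a direct substitution.
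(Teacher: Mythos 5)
Your proof is correct and follows essentially the same route as the paper: apply Theorem \ref{thm:Galois} with $a=1$ to both $\CC$ and $\CC_A^0$, use $\tau_1(\CC_A^0)=\tau_1(\CC)/\dim_\CC(A)$ and $\dim(\CC_A^0)=\dim(\CC)/\dim_\CC(A)^2$, and obtain $\theta_{\hat\sigma(\1_\CC)}=\theta_{\hat\sigma(\1_A)}$ from Corollary \ref{lem:known} via Equation \eqref{eq:GammaAndTheta} with a common choice of $\gamma$, finishing with total positivity of $\dim_\CC(A)$ for the equality of central charges. Two cosmetic remarks: the identity $\dim(\CC_A^0)=\dim(\CC)/\dim_\CC(A)^2$ is Equation \eqref{eq:Dimensions} rather than Lemma \ref{lem:TotallyPositive}(c), and your preliminary check that $\ord(T_{\CC_A^0})$ divides $N$ (so the same $\sigma$ serves for $\CC_A^0$) is a worthwhile detail the paper leaves implicit.
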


\begin{proof}
By Theorem \ref{thm:Galois} and Lemma \ref{lem:known},

\begin{equation}\label{eq:deek}
\begin{aligned}
	\tau_n(\CC_A^0)
	&=
	\sigma(\tau_1(\CC_A^0))
	\frac{\dim(\CC_A^0)}{\sigma(\dim(\CC_A^0))}
	\theta^{n}_{\hat\sigma(\1_A)}\\
	&=
	\frac{\sigma(\tau_1(\CC))}{\sigma(\dim_\CC(A))}
	\frac{\dim(\CC_A^0)}{\sigma(\dim(\CC_A^0))}
	\theta^{n}_{\hat\sigma(\1_A)}\\
	&=
	\tau_n(\CC)
	\frac{\sigma(\dim(\CC))}{\dim(\CC)}
	\frac{\dim(\CC_A^0)}{\sigma(\dim_\CC(A)\dim(\CC_A^0))}
	\left(
		\frac
		{\theta_{\hat{\sigma}(\1_{A})}}
		{\theta_{\hat{\sigma}(\1_\CC)}}
	\right)^n\\
	&=
	\tau_n(\CC)
	\frac{\sigma(\dim(\CC))}{\dim(\CC)}
	\frac{\dim(\CC) \dim_\CC(A)^{-2}}{\sigma(\dim(\CC)\dim_\CC(A)^{-1})}
	\left(
	\frac
		{\theta_{\hat{\sigma}(\1_{A})}}
		{\theta_{\hat{\sigma}(\1_\CC)}}
	\right)^n\\
	&=
	\frac{\sigma(\dim_\CC(A))}{\dim_\CC(A)^2}
	\tau_n(\CC)
	\left(
	\frac
		{\theta_{\hat{\sigma}(\1_{A})}}
		{\theta_{\hat{\sigma}(\1_\CC)}}
	\right)^n.
\end{aligned}
\end{equation}

For any $\gamma \in \BC$ such that $\gamma^3=\xi_1(\CC)$, we also have $\gamma^3=\xi_1(\CC_A^0)$ by Corollary \ref{lem:known}.  Thus by Equation (\ref{eq:GammaAndTheta}),
\begin{equation*}
	\theta_{\hat{\sigma}(\1_{\CC})}
	=
	\frac{\gamma}{\sigma^2(\gamma)}
	=\theta_{\hat{\sigma}(\1_{A})},
\end{equation*}
hence $\theta_{\hat{\sigma}(\1_{\CC})}/\theta_{\hat{\sigma}(\1_A)}=1$ which can be substituted into (\ref{eq:deek}) to yield the third equality of the statement. Since $\CC$ is modular, $\t_n(\CC)\ne 0$ by Theorem \ref{thm:Galois} and hence $\tau_n(\CC_A^0)$. Since $\dim_\CC(A)$ is totally positive, we have $|\tau_n(\CC_A^0)|=\frac{\sigma(\dim_\CC(A))}{\dim_\CC(A)^2}
	|\tau_n(\CC)|$. Therefore, the equality $\xi_n(\CC_A^0)=\xi_n(\CC)$ follows directly from definition.
\end{proof}

Theorem \ref{th:deek} could be viewed as a generalization of the last statement of Theorem \ref{thm:GaussSumOfCenter}.  Indeed, if $\CC=\ZZ(\DD)$ for some spherical fusion category $\DD$, then  the \emph{Lagrangian algebra} $A=I(\1_\DD)$ is a ribbon algebra of $\CC$ where $I$ is the left adjoint of the forgetful functor from $\CC$ to $\DD$. Since $\dim_\CC(A)^2=\dim(\mathcal{C})$, $\dim(\CC_A^0) = 1$ and so $\CC_A^0 \simeq \Vec$ (cf. \cite[Proposition 5.8]{DMNO}). Hence Theorem \ref{th:deek} implies
\begin{equation*}
\tau_n(\mathcal{Z}(\mathcal{D}))=\dfrac{\dim_\CC(A)^2}{\sigma(\dim_\CC(A))}\tau_n(\mathrm{Vec})=\dfrac{\dim(\mathcal{D})^2}{\sigma(\dim(\mathcal{D}))}.
\end{equation*}

\end{section}


\begin{section}{Witt relations and central charges}\label{sec:witt}

Recall that each Witt equivalence class in $\mathcal{W}$ has a completely anisotropic (contains no nontrivial connected \'etale algebras) representative which is unique up to braided equivalence \cite[Theorem 5.13]{DMNO}.  For Witt equivalence classes in the unitary subgroup $\mathcal{W}_\text{un}\subset\mathcal{W}$, generated by equivalence classes of pseudounitary nondegenerate braided fusion categories, this completely anisotropic representative is unique up to ribbon equivalence, as there exists a \emph{unique} spherical structure with nonnegative dimensions for all objects (cf. \cite{ENO}).  By \cite[Lemma 5.27] {DMNO}, if two pseudounitary modular categories $\mathcal{C}$ and $\mathcal{D}$ are Witt equivalent, then $\xi_1(\mathcal{C})=\xi_1(\mathcal{D})$.  The goal of this section is to extend this result to higher central charges with the following theorem.

\begin{theorem}\label{thm:Witt}
Let $\mathcal{C}$ and $\mathcal{D}$ be pseudounitary modular tensor categories such that $[\mathcal{C}]=[\mathcal{D}]$.  If $n\in\mathbb{Z}$ is coprime to  $\mathrm{ord}(T_\mathcal{C})\cdot\mathrm{ord}(T_\mathcal{D})$, then $\xi_n(\mathcal{C})=\xi_n(\mathcal{D})$.
\end{theorem}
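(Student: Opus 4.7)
The plan is to reduce the claim to a direct comparison of the higher central charges of local module categories, where Theorem \ref{th:deek} already does the heavy lifting. Specifically, since $[\CC]=[\DD]$, by \cite[Proposition 5.15]{DMNO} there exist connected \'etale algebras $A\in\CC$ and $B\in\DD$ together with a braided equivalence
\[
\Phi\colon \CC_A^0\xrightarrow{\;\sim\;} \DD_B^0.
\]
I would first verify that $A$ and $B$ are ribbon algebras in the sense of Definition \ref{d:ribbon_alg}: in a pseudounitary modular category all objects have strictly positive dimension, so $\dim_\CC(A)\ne 0$ and likewise $\dim_\DD(B)\ne 0$; the identity $\theta_A=\id_A$ for a connected \'etale algebra in a braided fusion category is standard (it follows from the compatibility of the twist with the multiplication together with commutativity). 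Hence both $\CC_A^0$ and $\DD_B^0$ fit into the hypotheses of Theorem \ref{th:deek}.

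Next I would argue that $\Phi$ is in fact a ribbon equivalence. Using \eqref{eq:Dimensions} and pseudounitarity of $\CC$ and $\DD$ one checks that $\CC_A^0$ and $\DD_B^0$ are themselves pseudounitary modular. Pseudounitary fusion categories carry a \emph{unique} spherical structure for which all simple objects have positive dimension (the canonical one), so $\Phi$ automatically intertwines the canonical spherical structures and therefore the ribbon structures; in particular $\xi_n(\CC_A^0)=\xi_n(\DD_B^0)$ whenever both sides are defined. One technical point to confirm is that the spherical structure inherited on $\CC_A^0$ from $\CC$ via \cite[Theorem 1.7]{KiO} agrees with the canonical pseudounitary structure on $\CC_A^0$, but this is immediate from the fact that pivotal dimensions in $\CC$ coincide with Frobenius--Perron dimensions and restrict to such on $\CC_A^0$.

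The proof is then completed by Theorem \ref{th:deek}: if $n$ is coprime to $\ord(T_\CC)\cdot\ord(T_\DD)$, then it is simultaneously coprime to each of $\ord(T_\CC)$ and $\ord(T_\DD)$, so
\[
\xi_n(\CC)=\xi_n(\CC_A^0)=\xi_n(\DD_B^0)=\xi_n(\DD),
\]
as desired. Note that under these coprimality hypotheses Theorem \ref{thm:Galois} guarantees $\tau_n\ne 0$ in all three modular categories, so every central charge appearing in the chain is well defined.

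The main obstacle I anticipate lies in the middle step: turning the braided equivalence furnished by Witt equivalence into a genuine ribbon equivalence. The formulation of Witt equivalence in \cite{DMNO} is a priori only at the level of braided fusion categories, and the passage to ribbon data depends crucially on the pseudounitarity hypothesis through the uniqueness of the canonical spherical structure. Once that identification is in place, the rest of the argument is essentially a bookkeeping exercise combining Theorem \ref{th:deek} with the multiplicativity of dimensions in $\CC_A^0$.
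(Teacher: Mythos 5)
Your route is genuinely different from the paper's: you use the local-module characterization of Witt equivalence ($\CC_A^0\simeq\DD_B^0$ for connected \'etale $A\in\CC$, $B\in\DD$, via \cite[Proposition 5.15]{DMNO}) together with Theorem \ref{th:deek}, whereas the paper uses the center characterization (\cite[Corollary 5.9]{DMNO}: $\ZZ(\AA)\simeq\CC\boxtimes\DD\rev$), pseudounitarity of $\AA$, Theorem \ref{thm:GaussSumOfCenter} to get $\xi_n(\ZZ(\AA))=1$, and then Lemmas \ref{lem1} and \ref{lem:mult} to conclude $\xi_n(\CC)\ol{\xi_n(\DD)}=1$. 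The paper's version is shorter and deliberately avoids \'etale algebras; your version could also work, and your handling of the spherical-structure bookkeeping (inherited versus canonical pseudounitary structure, and automatic compatibility of the braided equivalence with the canonical spherical structures) is fine.

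However, there is a genuine gap at the step where you assert that $\theta_A=\id_A$ ``is standard (it follows from the compatibility of the twist with the multiplication together with commutativity).'' That argument does not give the claim: naturality of $\theta$ plus $m\circ c_{A,A}=m$ only yields $\theta_A\circ m=m\circ(\theta_A\otimes\theta_A)$, i.e.\ that $\theta_A$ is an algebra automorphism of $A$, not the identity. Indeed the statement is false for general braided/premodular categories: take $\Rep(\BZ_2)$ with the symmetric braiding but the ribbon structure with $\theta_\delta=-1$ on the sign representation; the regular algebra $\1\oplus\delta$ is connected \'etale with $\theta_A\neq\id_A$ (and $\dim_\CC(A)=0$). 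This is precisely why the paper's Definition \ref{d:ribbon_alg} imposes $\theta_A=\id_A$ and $\dim A\neq0$ as extra axioms rather than deriving them, and why Theorems \ref{th:deeq} and \ref{th:deek} are stated only for ribbon algebras. In the setting you actually need --- connected \'etale algebras in a pseudounitary nondegenerate category with its canonical spherical structure --- the conclusion $\theta_A=\id_A$ is in fact true, but it is a nontrivial fact that uses nondegeneracy (for instance via the braided equivalence $\ZZ(\CC_A)\simeq\CC\boxtimes(\CC_A^0)\rev$ and the triviality of the twist on the canonical Lagrangian algebra, or by citing the literature on condensable/rigid algebras); it cannot be waved through as a formal consequence of commutativity. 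So your proof is repairable, but as written this load-bearing step is unjustified, whereas the paper's center-based argument never has to confront it.
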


\begin{proof}
Recall \cite[Corollary 5.9]{DMNO} that $\mathcal{C}$ is Witt equivalent to $\mathcal{D}$ if and only if there exists a fusion category $\mathcal{A}$ such that $\mathcal{Z}(\mathcal{A})\simeq\mathcal{C}\boxtimes\mathcal{D}^\mathrm{rev}$ is a braided equivalence. Since $\mathcal{Z}(\mathcal{A})$ is the Deligne product of pseudounitary categories, it is also pseudounitary. Moreover, since
$$
\dim(\AA)^2=\dim(\mathcal{Z}(\mathcal{A})) = \FPdim(\ZZ(\AA))=\FPdim(\AA)^2,
$$
$\mathcal{A}$ is also pseudounitary. The assumption $\gcd(n,\mathrm{ord}(T_\mathcal{C})\mathrm{ord}(T_\mathcal{D}))=1$ implies $\xi_n(\mathcal{C})$ and $\xi_n(\mathcal{D})$ are well-defined by Theorem \ref{thm:Galois}. By Theorem \ref{thm:GaussSumOfCenter},  $\xi_n(\mathcal{Z}(\mathcal{A}))=1$ as $\ord(T_{\ZZ(\AA)})=\lcm(\ord(T_\CC),\ord(T_{\DD}))$. Thus, by Lemmas \ref{lem1} and \ref{lem:mult}, we find
$$
\xi_n(\CC) \ol{\xi_{n}(\DD)}=1\,.
$$
By Corollary \ref{cor:RootOfUnity}, $\xi_{n}(\CC)$ and $\xi_{n}(\DD)$ are root of unity, and the result follows.
\end{proof}

The structure theorem for the classical Witt group of quadratic form $\WQ$ \cite{Schar} coincides with that of the pointed Witt group $\mathcal{W}_\mathrm{pt}$ \cite[Proposition 5.17]{DMNO}. In particular, \begin{equation}\label{eq:WittDecomp}
    \mathcal{W}_\text{pt}\cong\bigoplus_{\text{primes }p}\mathcal{W}_\text{pt}(p)
\end{equation}
where $\mathcal{W}_\text{pt}(p)$ consists of the equivalence classes of all pointed modular categories with the fusion rules of abelian $p$-groups. It is well-known that $ \mathcal{W}_{\mathrm{pt}}(2) \cong \mathbb{Z}_8 \oplus \mathbb{Z}_2$, $ \mathcal{W}_{\mathrm{pt}}(p) \cong  \mathbb{Z}_4$ for $ p \equiv 3\ (\mathrm{mod}\ 4) $, and $ \mathcal{W}_{\mathrm{pt}}(p) \cong \mathbb{Z}_2 \oplus \mathbb{Z}_2$ for $ p \equiv 1\  (\mathrm{mod}\ 4) $.

In Example \ref{wrong2}, we have demonstrated the application of the first central charge to distinguish the generator of $\WWp(p)$ when $p$ is an odd prime. In fact, the higher central charges can distinguish all element of $\WWp(p)$ for any prime $p$. We demonstrate this application in the following example.
\begin{example}
The group $\WWp(2)$ is generated by $\CC=\CC(\BZ_4, q)$ and $\DD=\CC(\BZ_4, q)\boxtimes\CC(\BZ_2, q')$, where $q(1)=\zeta_8$ and $q'(1)=\zeta_4\inv$. One can compute directly that
$$
\xi_1(\CC)=\zeta_8,\quad \xi_3(\CC)=\zeta_8^3, \quad \xi_1(\DD)= 1, \quad \xi_3(\DD)=-1.
$$
Denote by $\mathcal{E}_{a,b} = \CC^{\boxtimes a} \boxtimes \DD^{\boxtimes b}$ for any non-negative integers $a, b$, and $\WW'$ the subgroup of $\WWp(2)$ generated by the Witt equivalence classes of $\CC$ and $\DD$. We  find
$$
\xi([\mathcal{E}_{a,b}]) := (\xi_1(\mathcal{E}_{a,b}), \xi_3(\mathcal{E}_{a,b}))=( \zeta_8^a, (-1)^{b}\zeta_8^{3a})\,.
$$
In particular, $\xi: \WW' \to \langle \zeta_8 \rangle \times \langle \zeta_8\rangle$ defines a group homomorphism with its image isomorphic to $\BZ_8 \oplus \BZ_2$. Since $\WWp(2)$ is of order 16,  $[\CC]$ and $[\DD]$ are generators of $\WWp(2)$ by Theorem \ref{thm:Witt}.
\end{example}

Outside of $\mathcal{W}_\mathrm{pt}$ one can use higher central charges to differentiate various Witt equivalence classes.

\begin{example}
In this example, we use the formulas for the modular data of $\mathcal{C}(\mathfrak{g}_2,k)$ as in \cite[Sections 2.3.4]{schopieray2}.  Set $\mathcal{C}:=\mathcal{C}(\mathfrak{g}_2,8)^{\boxtimes5}$ and $\mathcal{D}:=\mathcal{C}(\mathfrak{g}_2,11)^{\boxtimes10}$.  One computes
\begin{equation}
\xi_1(\mathcal{C})=\exp(-\pi i/3)=\xi_1(\mathcal{D}),
\end{equation}
but $\mathcal{C}$ and $\mathcal{D}$ are not Witt equivalent.  To see this, note that $\mathrm{ord}(T_\mathcal{C}) = 36$, and $\mathrm{ord}(T_\mathcal{D}) = 45$ \cite{schopieray2}. By direct computation, we have
$\xi_{13}(\mathcal{C})=1$ and $\xi_{13}(\mathcal{D})=-1$. Hence $[\mathcal{C}]\neq[\mathcal{D}]$ by Theorem \ref{thm:Witt}.  All Witt group relations amongst classes $[\mathcal{C}(\mathfrak{g}_2,k)]$ were classified in \cite{2018arXiv181009057S} corroborating this result.	
\end{example}

The following proposition implies the converse of Theorem \ref{thm:Witt} does not hold.

\begin{proposition} Let $ \mathcal{C} $ be a modular category and $ N:= \ord(T_{\mathcal{C}})$. For any integer $ k $ relatively prime to $ N $,
$$
\xi_k(\mathcal{C}^{\boxtimes 4N})
=
1.
$$
\end{proposition}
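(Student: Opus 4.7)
The plan is to combine the multiplicativity of $\xi_k$ under Deligne tensor products (Lemma~\ref{lem:mult}) with the order bound on higher central charges coming from the Galois analysis in Section~\ref{sec:arith} (Corollary~\ref{cor:RootOfUnity}).

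First I would check that $\xi_k(\CC^{\boxtimes 4N})$ is well-defined. Because twists are multiplicative under $\boxtimes$, the order of $T_{\CC^{\boxtimes 4N}}$ divides $N = \ord(T_\CC)$, so $\gcd(k,\ord(T_{\CC^{\boxtimes 4N}}))=1$. Then Theorem~\ref{thm:Galois} applied to both $\CC$ and to $\CC^{\boxtimes 4N}$ guarantees that $\tau_k(\CC)$ and $\tau_k(\CC^{\boxtimes 4N})$ are both nonzero, so both $\xi_k(\CC)$ and $\xi_k(\CC^{\boxtimes 4N})$ make sense.

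Next I would apply Lemma~\ref{lem:mult} iteratively to get
\begin{equation*}
\xi_k(\CC^{\boxtimes 4N}) \;=\; \xi_k(\CC)^{4N}.
\end{equation*}
Finally, Corollary~\ref{cor:RootOfUnity}(a) gives $\xi_k(\CC)^{4N}=1$, which combined with the previous display yields the claim.

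There is essentially no obstacle: the statement is an immediate corollary of two results already proved in the paper. The only mildly subtle point worth pointing out is the check that $\xi_k$ is defined on $\CC^{\boxtimes 4N}$, since Lemma~\ref{lem:mult} requires $\tau_k(\CC)\neq 0$ before one may speak of $\xi_k$ of a tensor product — but this is handled automatically by Theorem~\ref{thm:Galois} since $\gcd(k,N)=1$.
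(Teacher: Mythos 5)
Your proof is correct and follows essentially the same route as the paper: well-definedness from Theorem \ref{thm:Galois}, multiplicativity of $\xi_k$ under $\boxtimes$ from Lemma \ref{lem:mult}, and the order bound from Corollary \ref{cor:RootOfUnity}. The paper merely phrases the last step through the anomaly, writing $\xi_k(\CC^{\boxtimes 4N})=\alpha_k(\CC^{\boxtimes 2N})=\alpha_k(\CC)^{2N}$ and killing this with Corollary \ref{cor:RootOfUnity}(a) and Lemma \ref{lem1}, which is the same computation as your $\xi_k(\CC)^{4N}=1$.
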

\begin{proof}
By Theorem \ref{thm:Galois}, $ \alpha_k(\mathcal{C}) $ and $ \xi_k(\mathcal{C}) $ are well-defined. By Lemma \ref{lem:mult}, Corollary \ref{cor:RootOfUnity}, and Lemma \ref{lem1}, we have
e have
$$
\xi_k(\mathcal{C}^{\boxtimes 4N})
=
\xi^2_k(\mathcal{C}^{\boxtimes 2N})
=
\alpha_k(\mathcal{C}^{\boxtimes 2N})
=
\alpha_k^{2N}(\mathcal{C})
=
\sigma(\alpha^{2N}_1(\mathcal{C}))\theta_{\hat\sigma(\1)}^{2N}
=
1. \qedhere
$$
\end{proof}

\begin{example}
Let $ \CC $ be a modular category such that its Witt equivalence class $ [\CC]\in\WW $ is of infinite order. The existence of such categories is discussed in \cite[Example 6.4]{DMNO}. By the above proposition, for any $ k $ coprime to $ N $, $ \xi_k(\mathcal{C}^{\boxtimes 4N}) = 1 = \xi_k(\Vec) $, but $ [\mathcal{C}^{\boxtimes 4N}] \neq [\Vec] $ in $ \mathcal{W} $ by our assumption on $ \mathcal{C} $. Hence, the above proposition provides counter-examples to the converse of Theorem \ref{thm:Witt}.
\end{example}

\end{section}
\begin{section}{Questions}

\par The higher central charges of pointed modular categories can be computed by using the structure of $\WWp$  (Section \ref{sec:witt}). There is an explicit formula for $\xi_1(\mathcal{C}(\mathfrak{g},k))$ based only on $\dim(\mathfrak{g})$, $k$, and the dual Coxeter number of $\mathfrak{g}$ \cite[Equation 7.4.5]{BakalovKirillov}.  As $\xi_n$ is often undefined when $n\neq\pm1,0$ it is unclear whether a similar general formula (i.e. \!without reference to Galois automorphisms in Theorem \ref{thm:Galois}) for higher central charges exists.

\begin{question}
Does an explicit formula exist for $\xi_n(\mathcal{C}(\mathfrak{g},k))$ for $n\neq\pm1,0$ using only the input data of $n$, $\mathfrak{g}$, and $k$?
\end{question}

\par By Example \ref{ex:zero_gauss_sum}, there exist modular categories $\DD$ with some of its higher Gauss sums being 0. Thus, if $\mathcal{C}$ is a modular category containing a modular subcategory $\DD$ such that $\t_a(\DD)=0$ for some integer $a >1$, then $\t_{a}(\CC)=0$ as $\CC\simeq\DD\boxtimes\DD'$.
\begin{question}
Are there necessary and sufficient conditions for higher Gauss sums of a premodular (or modular) category to vanish (hence higher central charges are undefined)?
\end{question}

\end{section}

\begin{acknowledgement}
The second author would like to thank Victor Ostrik for his suggestion to explore the notion of higher Gauss sums. Both the second and third authors would like to thank MSRI (Summer Graduate School 791) for providing the opportunity to initiate this collaboration. The third author would like to thank Thomas Kerler and James Cogdell, and the first author would like to thank Ling Long  for fruitful discussions.
\end{acknowledgement}
\bibliographystyle{abbrv}

\begin{thebibliography}{10}

\bibitem{andem}
G.~Anderson and G.~Moore.
\newblock Rationality in conformal field theory.
\newblock {\em Comm. Math. Phys.}, 117(3):441--450, 1988.

\bibitem{BakalovKirillov}
B.~Bakalov and A.~Kirillov, Jr.
\newblock {\em Lectures on tensor categories and modular functors}, volume~21
  of {\em University Lecture Series}.
\newblock American Mathematical Society, Providence, RI, 2001.

\bibitem{Bantay}
P.~Bantay.
\newblock The {F}robenius-{S}chur indicator in conformal field theory.
\newblock {\em Phys. Lett. B}, 394(1-2):87--88, 1997.

\bibitem{paul}
P.~Bruillard, S.-H. Ng, E.~C. Rowell, and Z.~Wang.
\newblock Rank-finiteness for modular categories.
\newblock {\em J. Amer. Math. Soc.}, 29(3):857--881, 2016.

\bibitem{Calegari2011}
F.~Calegari, S.~Morrison, and N.~Snyder.
\newblock Cyclotomic integers, fusion categories, and subfactors.
\newblock {\em Communications in Mathematical Physics}, 303(3):845--896, May
  2011.

\bibitem{CosteGannon94}
A.~Coste and T.~Gannon.
\newblock Remarks on {G}alois symmetry in rational conformal field theories.
\newblock {\em Phys. Lett. B}, 323(3-4):316--321, 1994.

\bibitem{DMNO}
A.~Davydov, M.~M\"uger, D.~Nikshych, and V.~Ostrik.
\newblock The {W}itt group of non-degenerate braided fusion categories.
\newblock {\em J. Reine Angew. Math.}, 677:135--177, 2013.

\bibitem{DNO}
A.~Davydov, D.~Nikshych, and V.~Ostrik.
\newblock On the structure of the {W}itt group of braided fusion categories.
\newblock {\em Selecta Mathematica}, 19(1):237--269, 2013.

\bibitem{deBoerGoeree}
J.~de~Boer and J.~Goeree.
\newblock Markov traces and {${\rm II}_1$} factors in conformal field theory.
\newblock {\em Comm. Math. Phys.}, 139(2):267--304, 1991.

\bibitem{dong2015}
C.~Dong, X.~Lin, and S.-H. Ng.
\newblock Congruence property in conformal field theory.
\newblock {\em Algebra Number Theory}, 9(9):2121--2166, 2015.

\bibitem{DGNO}
V.~Drinfeld, S.~Gelaki, D.~Nikshych, and V.~Ostrik.
\newblock On braided fusion categories. {I}.
\newblock {\em Selecta Math. (N.S.)}, 16(1):1--119, 2010.

\bibitem{EMa}
S.~Eilenberg and S.~MacLane.
\newblock Cohomology theory of {A}belian groups and homotopy theory. {I}.
\newblock {\em Proc. Nat. Acad. Sci. U. S. A.}, 36:443--447, 1950.

\bibitem{EMb}
S.~Eilenberg and S.~MacLane.
\newblock Cohomology theory of {A}belian groups and homotopy theory. {II}.
\newblock {\em Proc. Nat. Acad. Sci. U. S. A.}, 36:657--663, 1950.

\bibitem{tcat}
P.~Etingof, S.~Gelaki, D.~Nikshych, and V.~Ostrik.
\newblock {\em Tensor categories}, volume 205 of {\em Mathematical Surveys and
  Monographs}.
\newblock American Mathematical Society, Providence, RI, 2015.

\bibitem{ENO}
P.~Etingof, D.~Nikshych, and V.~Ostrik.
\newblock On fusion categories.
\newblock {\em Ann. of Math. (2)}, 162(2):581--642, 2005.

\bibitem{postriksemisimple}
P.~{Etingof} and V.~{Ostrik}.
\newblock {On semisimplification of tensor categories}.
\newblock {\em ArXiv e-prints}, Jan. 2018.

\bibitem{EvansGannon11}
D.~E. Evans and T.~Gannon.
\newblock The exoticness and realisability of twisted {H}aagerup-{I}zumi
  modular data.
\newblock {\em Comm. Math. Phys.}, 307(2):463--512, 2011.

\bibitem{sumatio}
C.~F. Gauss.
\newblock Summatio serierum quarundam singularium.
\newblock {\em Comment. Soc. Regiae Sci. Gott.}, 1, 1811.

\bibitem{GaussDA}
C.~F. Gauss.
\newblock {\em Disquisitiones arithmeticae}.
\newblock Translated into English by Arthur A. Clarke, S. J. Yale University
  Press, New Haven, Conn.-London, 1966.

\bibitem{kronecker2}
G.~Greiter.
\newblock A simple proof for a theorem of {K}ronecker.
\newblock {\em The American Mathematical Monthly}, 85(9):756--757, 1978.

\bibitem{Ireland}
K.~Ireland and M.~Rosen.
\newblock {\em A classical introduction to modern number theory}, volume~84 of
  {\em Graduate Texts in Mathematics}.
\newblock Springer-Verlag, New York, second edition, 1990.

\bibitem{KMN}
Y.~Kashina, S.~Montgomery, and S.-H. Ng.
\newblock On the trace of the antipode and higher indicators.
\newblock {\em Israel J. Math.}, 188:57--89, 2012.

\bibitem{KSZ}
Y.~Kashina, Y.~Sommerh\"{a}user, and Y.~Zhu.
\newblock On higher {F}robenius-{S}chur indicators.
\newblock {\em Mem. Amer. Math. Soc.}, 181(855):viii+65, 2006.

\bibitem{kac}
V.~G. Kač and D.~H. Peterson.
\newblock Infinite-dimensional lie algebras, theta functions and modular forms.
\newblock {\em Advances in Mathematics}, 53(2):125 -- 264, 1984.

\bibitem{kirby}
R.~Kirby and P.~Melvin.
\newblock The {$3$}-manifold invariants of {W}itten and {R}eshetikhin-{T}uraev
  for {${\rm sl}(2,{\bf C})$}.
\newblock {\em Invent. Math.}, 105(3):473--545, 1991.

\bibitem{KiO}
A.~Kirillov and V.~Ostrik.
\newblock On a {$q$}-analogue of the {McKay} correspondence and the {ADE}
  classification of {$\mathfrak{sl}_2$} conformal field theories.
\newblock {\em Advances in Mathematics}, 171(2):183--227, 2002.

\bibitem{LangAlgebra}
S.~Lang.
\newblock {\em Algebra}, volume 211 of {\em Graduate Texts in Mathematics}.
\newblock Springer-Verlag, New York, third edition, 2002.

\bibitem{Dir}
G.~Lejeune~Dirichlet.
\newblock Recherches sur diverses applications de l'{A}nalyse
  infinit\'{e}simale \`a la {T}h\'{e}orie des {N}ombres. {S}econde {P}artie.
\newblock {\em J. Reine Angew. Math.}, 21:134--155, 1840.

\bibitem{lick}
W.~B.~R. Lickorish.
\newblock Invariants for {$3$}-manifolds from the combinatorics of the {J}ones
  polynomial.
\newblock {\em Pacific J. Math.}, 149(2):337--347, 1991.

\bibitem{LM00}
V.~Linchenko and S.~Montgomery.
\newblock A {F}robenius-{S}chur theorem for {H}opf algebras.
\newblock {\em Algebr. Represent. Theory}, 3(4):347--355, 2000.
\newblock Special issue dedicated to Klaus Roggenkamp on the occasion of his
  60th birthday.

\bibitem{lusztig}
G.~Lusztig.
\newblock {\em Introduction to Quantum Groups}.
\newblock Modern Birkh{\"a}user Classics. Birkh{\"a}user Boston, 2010.

\bibitem{MugerSubfactor2}
M.~M{\"u}ger.
\newblock From subfactors to categories and topology. {II}. {T}he quantum
  double of tensor categories and subfactors.
\newblock {\em J. Pure Appl. Algebra}, 180(1-2):159--219, 2003.

\bibitem{murakami}
H.~Murakami, T.~Ohtsuki, and M.~Okada.
\newblock Invariants of three-manifolds derived from linking matrices of framed
  links.
\newblock {\em Osaka J. Math.}, 29(3):545--572, 1992.

\bibitem{NgSchaunburgSpherical}
S.-H. Ng and P.~Schauenburg.
\newblock Frobenius-{S}chur indicators and exponents of spherical categories.
\newblock {\em Adv. Math.}, 211(1):34--71, 2007.

\bibitem{NSind}
S.-H. Ng and P.~Schauenburg.
\newblock Higher {F}robenius-{S}chur indicators for pivotal categories.
\newblock In {\em Hopf algebras and generalizations}, volume 441 of {\em
  Contemp. Math.}, pages 63--90. Amer. Math. Soc., Providence, RI, 2007.

\bibitem{NS07b}
S.-H. Ng and P.~Schauenburg.
\newblock Central invariants and higher indicators for semisimple quasi-{H}opf
  algebras.
\newblock {\em Trans. Amer. Math. Soc.}, 360(4):1839--1860, 2008.

\bibitem{NS10}
S.-H. Ng and P.~Schauenburg.
\newblock Congruence subgroups and generalized {F}robenius-{S}chur indicators.
\newblock {\em Comm. Math. Phys.}, 300(1):1--46, 2010.

\bibitem{codegrees}
V.~Ostrik.
\newblock On formal codegrees of fusion categories.
\newblock {\em Math. Res. Lett.}, 16(5):895--901, 2009.

\bibitem{ost15}
V.~Ostrik.
\newblock Pivotal fusion categories of rank 3.
\newblock {\em Moscow Mathematical Journal}, 15(2):373--396, 2015.

\bibitem{pareigis}
B.~Pareigis.
\newblock On braiding and dyslexia.
\newblock {\em Journal of Algebra}, 171:413--425, 1995.

\bibitem{penneys}
D.~Penneys and J.~E. Tener.
\newblock Subfactors of index less than 5, part 4: vines.
\newblock {\em International Journal of Mathematics}, 23(3):18, 2012.

\bibitem{rt}
N.~Reshetikhin and V.~G. Turaev.
\newblock Invariants of {$3$}-manifolds via link polynomials and quantum
  groups.
\newblock {\em Invent. Math.}, 103(3):547--597, 1991.

\bibitem{rowell2006}
E.~C. Rowell.
\newblock From quantum groups to unitary modular tensor categories.
\newblock {\em Contemporary Mathematics}, 413:215--230, 2006.

\bibitem{RW}
E.~C. Rowell and Z.~Wang.
\newblock Mathematics of topological quantum computing.
\newblock {\em Bull. Amer. Math. Soc. (N.S.)}, 55(2):183--238, 2018.

\bibitem{Schar}
W.~Scharlau.
\newblock {\em Quadratic and {H}ermitian forms}, volume 270 of {\em Grundlehren
  der Mathematischen Wissenschaften [Fundamental Principles of Mathematical
  Sciences]}.
\newblock Springer-Verlag, Berlin, 1985.

\bibitem{schopieray2017}
A.~Schopieray.
\newblock Classification of {$\mathfrak{sl}_3$} relations in the {W}itt group
  of nondegenerate braided fusion categories.
\newblock {\em Communications in Mathematical Physics}, 353(3):1103--1127,
  2017.

\bibitem{schopieray2}
A.~Schopieray.
\newblock Level bounds for exceptional quantum subgroups in rank two.
\newblock {\em Internat. J. Math.}, 29(5):1850034, 33, 2018.

\bibitem{2018arXiv181009057S}
A.~{Schopieray}.
\newblock {Prime decomposition of modular tensor categories of local modules of
  Type D}.
\newblock {\em ArXiv e-prints}, Oct. 2018.

\bibitem{Shi1}
K.~Shimizu.
\newblock Frobenius-{S}chur indicators in {T}ambara-{Y}amagami categories.
\newblock {\em J. Algebra}, 332:543--564, 2011.

\bibitem{ShimizuComputation}
K.~Shimizu.
\newblock Some computations of {F}robenius-{S}chur indicators of the regular
  representations of {H}opf algebras.
\newblock {\em Algebr. Represent. Theory}, 15(2):325--357, 2012.

\bibitem{tamyam}
D.~Tambara and S.~Yamagami.
\newblock Tensor categories with fusion rules of self-duality for finite
  abelian groups.
\newblock {\em J. Algebra}, 209(2):692--707, 1998.

\bibitem{TuckerNGHI}
H.~{Tucker}.
\newblock {Frobenius-Schur indicators for near-group and Haagerup-Izumi fusion
  categories}.
\newblock {\em arXiv e-prints}, page arXiv:1510.05696, Oct. 2015.

\bibitem{tur}
V.~Turaev.
\newblock Reciprocity for {G}auss sums on finite abelian groups.
\newblock {\em Math. Proc. Cambridge Philos. Soc.}, 124(2):205--214, 1998.

\bibitem{TuraevBook}
V.~G. Turaev.
\newblock {\em Quantum invariants of knots and 3-manifolds}, volume~18 of {\em
  De Gruyter Studies in Mathematics}.
\newblock Walter de Gruyter \& Co., Berlin, revised edition, 2010.

\bibitem{vafa}
C.~Vafa.
\newblock Toward classification of conformal theories.
\newblock {\em Phys. Lett. B}, 206(3):421--426, 1988.

\bibitem{jonessub}
S.~M. Vaughan F. R.~Jones and N.~Snyder.
\newblock The classification of subfactors of index at most 5.
\newblock {\em Bulletin of the American Mathematical Society}, 51(2):277--327,
  April 2014.

\bibitem{WW}
Z.~{Wan} and Y.~{Wang}.
\newblock {Classification of spherical fusion categories of Frobenius-Schur
  exponent 2}.
\newblock {\em arXiv e-prints}, page arXiv:1811.02004, Nov. 2018.

\bibitem{WZ}
Z.~Wang.
\newblock {\em Topological quantum computation}, volume 112 of {\em CBMS
  Regional Conference Series in Mathematics}.
\newblock Published for the Conference Board of the Mathematical Sciences,
  Washington, DC; by the American Mathematical Society, Providence, RI, 2010.

\bibitem{witten}
E.~Witten.
\newblock Quantum field theory and the {J}ones polynomial.
\newblock {\em Comm. Math. Phys.}, 121(3):351--399, 1989.

\end{thebibliography}

\end{document}